\documentclass[11pt, reqno]{amsart}
\usepackage{amsmath, amsfonts, amsthm, amssymb, multicol, mathtools, dsfont, mathrsfs}
\usepackage{graphicx}
\usepackage{float, hyperref}
\usepackage{scalerel,stackengine, subcaption}
\usepackage[dvipsnames,dvipsnames,x11names]{xcolor}
\usepackage{enumitem}
\usepackage{pgfplots}
\usepgfplotslibrary{fillbetween}
\pgfplotsset{width=10cm,compat=1.9}
\usepackage[square,sort,comma,numbers]{natbib}
\usepackage{comment}
\usepackage{soul}
\usepackage[normalem]{ulem}

\DeclareFontFamily{U}{mathx}{\hyphenchar\font45}
\DeclareFontShape{U}{mathx}{m}{n}{<-> mathx10}{}
\DeclareSymbolFont{mathx}{U}{mathx}{m}{n}
\DeclareMathAccent{\widebar}{0}{mathx}{"73}

\allowdisplaybreaks

\makeatletter
\g@addto@macro\bfseries{\boldmath}
\makeatother

\makeatletter
\def\@setauthors{%
  \begingroup
  \def\thanks{\protect\thanks@warning}%
  \trivlist
  \centering\footnotesize \@topsep30\p@\relax
  \advance\@topsep by -\baselineskip
  \item\relax
  \author@andify\authors
  \def\\{\protect\linebreak}

  \normalsize\lowercase{\authors}%
  
	\ifx\@empty\contribs
  \else
    ,\penalty-3 \space \@setcontribs
    \@closetoccontribs
  \fi
  \endtrivlist
  \endgroup
}
\def\@settitle{\begin{center}
\LARGE\lowercase{\@title}
  \end{center}%
}
\makeatother
\newcommand{\authoremail}[1]{\email{\href{mailto:#1}{\color{lightblue}{#1}}}}
\newcommand{\authoraddress}[1]{\address{\normalfont{#1}}}

\numberwithin{equation}{section}

\newtheorem{thm}{Theorem}[section]
\newtheorem{lemma}[thm]{Lemma}
\newtheorem{cor}[thm]{Corollary}

\newtheorem{prop}[thm]{Proposition}

\theoremstyle{remark}
\newtheorem{definition}[thm]{Definition}
\newtheorem{remark}[thm]{Remark}

\renewcommand{\epsilon}{\varepsilon}

\renewcommand{\ge}{\geqslant}

\renewcommand{\geq}{\geqslant}
\renewcommand{\leq}{\leqslant}

\makeatletter
\DeclareRobustCommand\widecheck[1]{{\mathpalette\@widecheck{#1}}}
\def\@widecheck#1#2{%
    \setbox\z@\hbox{\m@th$#1#2$}%
    \setbox\tw@\hbox{\m@th$#1%
       \widehat{%
          \vrule\@width\z@\@height\ht\z@
          \vrule\@height\z@\@width\wd\z@}$}%
    \dp\tw@-\ht\z@
    \@tempdima\ht\z@ \advance\@tempdima2\ht\tw@ \divide\@tempdima\thr@@
    \setbox\tw@\hbox{%
       \raise\@tempdima\hbox{\scalebox{1}[-1]{\lower\@tempdima\box
\tw@}}}%
    {\ooalign{\box\tw@ \cr \box\z@}}}
\makeatother

\stackMath
\newcommand\reallywidehat[1]{%
\savestack{\tmpbox}{\stretchto{%
  \scaleto{%
    \scalerel*[\widthof{\ensuremath{#1}}]{\kern.1pt\mathchar"0362\kern.1pt}%
    {\rule{0ex}{\textheight}}
  }{\textheight}% 
}{2.4ex}}%
\stackon[-6.9pt]{#1}{\tmpbox}%
}
\definecolor{lightblue}{HTML}{2B77A4}
\colorlet{plotblue}{LightSkyBlue3!80}
\definecolor{darkred}{HTML}{9E0D0D}
\definecolor{purp}{HTML}{d603a9}
\definecolor{dartmouthgreen}{HTML}{00A64F}
\definecolor{Junglegreen}{HTML}{00A99A}
\definecolor{yellowcolour}{HTML}{f07c02}
\hypersetup{
	colorlinks=true,
	linkcolor=darkred,
	urlcolor=darkred,
	citecolor=lightblue
}
\title{Majorization and additive tuples in $\mathbb{Z}_2^n$}

\usepackage{fancyhdr}
\author{\large{Sophie Huczynska}}
\authoraddress{Sophie Huczynska, School of Mathematics and Statistics, University of St Andrews, St Andrews, KY16 9SS, Scotland}
\authoremail{sh70@st-andrews.ac.uk}
\thanks{}

\author{\large{Firdavs Rakhmonov}}
\authoraddress{Firdavs Rakhmonov, School of Mathematics and Statistics, University of St Andrews, St Andrews, KY16 9SS, Scotland}
\authoremail{fr52@st-andrews.ac.uk}
\thanks{FR was  financially supported by a  \emph{Leverhulme Trust Research Project Grant} (RPG-2023-281).}

\author{\large{Chi Hoi Yip}}
\authoraddress{Chi Hoi Yip, Department of Mathematics, Hong Kong University of Science and Technology, Clear Water Bay, Hong Kong}
\authoremail{machyip@ust.hk}
\thanks{}

\date{}

\begin{document}
\thispagestyle{empty}

\begin{abstract}
Majorization is a fundamental tool for comparing how ``spread out'' the entries of two vectors are. Key majorization results were obtained for the integers by Hardy, Littlewood and P\'olya and for $\mathbb{Z}_p$ by Lev. In this paper, we establish a powerful majorization theorem in $\mathbb{Z}_2^n$ that is an analogue of Lev's result in $\mathbb{Z}_p$. Our proof is based on a novel use of a compression argument that optimizes certain sums of additive representation counts. Our majorization theorem has several applications in additive combinatorics. We resolve the question: given subsets $A_1,\ldots,A_k\subseteq\mathbb{Z}_2^n$ of prescribed sizes, when is the function $r_k(A_1,\ldots,A_k)$, which counts the number of additive $k$-tuples in $A_1\times\cdots\times A_k$, maximized? We establish the corresponding minimization result and a characterization of all extremizers for $r_k(A,\ldots,A)$ when $k$ is odd. When $k=3$, this quantity is the number of Schur triples in $A$; as a special case, we recover a theorem of Samotij and Sudakov on Schur triples. We also obtain a convexity inequality and use it to prove an analogue of Pollard's theorem that strengthens and extends a well-known result of Bollob\'as and Leader in the $\mathbb{Z}_2^n$ setting. 
\\\\
\emph{Mathematics Subject Classification}: primary: 11B30; secondary: 11B34, 05D05, 39B62.
\\
\emph{Key words and phrases}: majorization, rearrangement inequalities, additive tuples, extremal problems, representation functions, compression method, $\mathbb{Z}_2^n$, Pollard-type inequalities, sumsets.
\end{abstract}
\maketitle
%\tableofcontents

\section{Introduction}

Majorization is a concept that arises naturally in a range of mathematical disciplines in which inequalities play a key role. It emerged at the beginning of the twentieth century through the work of several independent pioneers in different fields. Muirhead \cite{Mui03} investigated generalizations of the arithmetic-geometric mean inequality; Hardy, Littlewood, and P\'olya \cite{HLP29} studied the theory of rearrangements; and Schur \cite{Sch23} established majorization results for positive semidefinite Hermitian matrices. Economists were also among the early contributors, motivated by the need for rigorous methods to compare how equally income and wealth are distributed. For a comprehensive account of majorization, we refer the reader to the book by Marshall, Olkin, and Arnold \cite{MOA11}.

Intuitively, given a function $f:\mathbb{R}^m\to\mathbb{R}$, one seeks conditions on vectors $\mathbf{x}=(x_1,\ldots,x_m)\in\mathbb{R}^m$ and $\mathbf{y}=(y_1,\ldots,y_m)\in\mathbb{R}^m$ that capture the idea that the entries of $\mathbf{x}$ are ``less spread out'' than those of $\mathbf{y}$ and ensure that
$$
f(\mathbf{x})\leq f(\mathbf{y}).
$$
It turns out that, in many situations, the appropriate notion is the following.

\begin{definition}
\label{def:majorization}
Let $\mathbf{x}=(x_1,\dots,x_m), \mathbf{y}=(y_1,\dots,y_m)\in \mathbb{R}^m$. We say that $\mathbf{x}$ is majorized by $\mathbf{y}$, written $\mathbf{x} \preccurlyeq \mathbf{y}$, if the following two conditions hold:
    \begin{enumerate}[label=\arabic*)]
        \item The entries of both vectors are arranged in nonincreasing order: $x_1\geq \dots\geq x_m$ and $y_1\geq \dots \geq y_m$.
        \item We have $\sum\limits_{i=1}^{\ell} x_i \leq \sum\limits_{i=1}^{\ell} y_i$ for every $1\leq \ell \leq m-1$, and $\sum\limits_{i=1}^m x_i = \sum\limits_{i=1}^m y_i$.
    \end{enumerate}
\end{definition}
 
In \cite{HLP29}, Hardy, Littlewood and P\'olya sought conditions ensuring that
$\sum_{i=1}^m f(x_i) \leq \sum_{i=1}^m f(y_i)$
for every convex function $f: \mathbb{R} \rightarrow \mathbb{R}$. They proved that this inequality holds precisely when $\mathbf{x}$ is majorized by $\mathbf{y}$. Independently, Karamata \cite{Kar32} established the following variant, now known as Karamata's inequality.
\begin{prop}[Karamata's inequality]
\label{prop:karam}
Let $I\subseteq\mathbb{R}$ be an interval, and let
$(x_1,\dots,x_m)\in I^m$ and $(y_1,\dots,y_m)\in I^m$ satisfy $(x_1,\dots,x_m)\preccurlyeq(y_1,\dots,y_m)$. If $f:I\to\mathbb{R}$ is convex, then
$$
f(x_1)+\dots+f(x_m)
\leq
f(y_1)+\dots+f(y_m).
$$
\end{prop}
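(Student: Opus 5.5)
The plan is the classical Abel summation argument using slopes of the convex function. First, dispose of coincident entries. For each $i$ define
$$c_i=\frac{f(y_i)-f(x_i)}{y_i-x_i}\quad\text{if } x_i\neq y_i.$$
If $x_i=y_i$, let $c_i$ be any value between the one-sided derivatives $f'_-(x_i)$ and $f'_+(x_i)$; at an endpoint of $I$ where only one one-sided slope exists, we take a suitable limiting choice. We will see below that such a value can be chosen consistently with monotonicity. In every case $f(y_i)-f(x_i)=c_i(y_i-x_i)$.

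The key step is to show that $c_1\geq c_2\geq\dots\geq c_m$. This is the slope-monotonicity property of convex functions: the slope of the chord of $f$ over an interval with endpoints $\{a,b\}$ is nondecreasing in each endpoint. Since $x_i\geq x_{i+1}$ and $y_i\geq y_{i+1}$ by condition 1) of Definition~\ref{def:majorization}, the chord for index $i$ has both endpoints at least those for index $i+1$, so $c_i\geq c_{i+1}$. The degenerate cases $x_i=y_i$ are exactly where care is needed, and they are handled by the choice of one-sided derivatives: the chord slope over $[a,b]$ lies between $f'_+(a)$ and $f'_-(b)$. I expect this to be the only delicate point. If it causes trouble at an endpoint of $I$, I would instead reduce to strictly separated entries by perturbation, or simply drop the indices with $x_i=y_i$ after noting that the slopes on either side still compare correctly.

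Then set $D_\ell=\sum_{i\le \ell}(y_i-x_i)$ with $D_0=0$. By condition 2) of Definition~\ref{def:majorization}, $D_\ell\geq 0$ for $\ell<m$ and $D_m=0$. Abel summation gives
$$\sum_{i=1}^m\bigl(f(y_i)-f(x_i)\bigr)=\sum_{i=1}^m c_i(D_i-D_{i-1})=\sum_{i=1}^{m-1}(c_i-c_{i+1})D_i+c_mD_m .$$
Each term $(c_i-c_{i+1})D_i$ is nonnegative, and the last term vanishes because $D_m=0$. Hence the sum is $\geq 0$, which is exactly the stated inequality.
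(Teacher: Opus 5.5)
The paper gives no proof of this proposition. It is quoted as a classical result of Karamata \cite{Kar32} (cf.\ Hardy--Littlewood--P\'olya \cite{HLP29}), so there is no internal argument to compare with. Your proof is the standard one, essentially Karamata's own, and it is correct:
\begin{itemize}
\item write each increment $f(y_i)-f(x_i)$ as a chord slope $c_i$ times $y_i-x_i$;
\item show the slopes are nonincreasing using the three-chord lemma;
\item finish by Abel summation against the nonnegative partial sums $D_\ell$, with $D_m=0$.
\end{itemize}

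The one point to tighten is the one you flagged yourself. For an index with $x_i=y_i$ you do not need one-sided derivatives, and that choice can actually fail. At a closed endpoint of $I$ the one-sided derivative may be infinite: $f(x)=-\sqrt{x}$ on $[0,\infty)$ has $f'_+(0)=-\infty$. A convex function may even jump upward at an endpoint. This case does arise in the paper: Theorem~\ref{thm: main convexity inequality} applies the proposition on $I=[0,\infty)$ to vectors of representation counts, where many entries are $0$ in both vectors.

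Since $f(y_i)-f(x_i)=0=c_i(y_i-x_i)$ for every real $c_i$, your fallback is the right fix: delete all indices with $x_i=y_i$. This keeps both orderings. It leaves the partial sums at the surviving indices unchanged, so they are $\geq 0$ before the last surviving index and the total is still $D_m=0$.

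Monotonicity of the surviving slopes then follows from the three-chord inequality $s(a,b)\leq s(a,c)\leq s(b,c)$ for $a<b<c$. This uses only the convexity inequality, not continuity, so it holds up to the endpoints of $I$. To compare two surviving indices $i<i'$, order each chord's endpoints as (min, max). Raise the max endpoint first, then the min endpoint; with this order no intermediate chord degenerates, and you get $c_i\geq c_{i'}$.
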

Classical majorization theorems based on rearrangement were obtained for the integers by Hardy, Littlewood and P\'olya (see Chapter X of \cite{HLP34}) and Gabriel \cite{Gab32}. In 2001, Lev \cite{Lev01} proved analogues in $\mathbb{Z}_p$ of a sequence of three such results, each of which is used to establish the next (Theorems 1, 2 and 3). Lev observed that, whereas the $\mathbb{Z}_p$ versions readily imply the corresponding integer results, the reverse implication does not hold. For the first majorization result in \cite{Lev01} (Theorem 1), the first part, from which the second part can be deduced, is proved by showing that it is equivalent to Pollard's theorem. Lev also noted in the abstract of \cite{Lev01} that Theorem 1 has the following consequence: if $A_1,\dots,A_k\subseteq \mathbb{Z}_p$, then the number of solutions to
\begin{equation*}
    c_1x_1+\cdots+c_kx_k=\lambda,
    \qquad (x_1,\dots,x_k)\in A_1\times\dots\times A_k,
\end{equation*}
where $c_i\in \mathbb{Z}_p\setminus\{0\}$ and $\lambda\in \mathbb{Z}_p$ are fixed, does not exceed the number of solutions to
\begin{equation*}
    x_1+\cdots+x_k=0,
    \qquad (x_1,\dots,x_k)\in \widetilde{A}_1\times\dots\times \widetilde{A}_k,
\end{equation*}
where each $\widetilde{A}_i$ is an arithmetic progression in $\mathbb{Z}_p$ of size $|A_i|$ centered at zero.  This, in turn, yields a maximization result for additive $k$-tuples in $\mathbb{Z}_p$. The same result was independently established in 2019 by Chervak, Pikhurko and Staden~\cite{CPS19}, also using Pollard's theorem; these authors were unaware of Lev's work prior to publication. More generally, although ideas related to majorization appear implicitly throughout the additive combinatorics literature, the connection does not seem to have been explicitly recognized by either community. For example, while \cite{MOA11} contains several references to applications in combinatorics, these mainly concern graph theory, and no mention is made of the results of Lev or Pollard described above.

In this paper, we prove a majorization theorem in $\mathbb{Z}_2^n$ analogous to Theorem 1 of \cite{Lev01}. Throughout the paper, we write $q\coloneqq |\mathbb{Z}_2^n|=2^n$. We identify $\mathbb{Z}_2^n$ with the binary strings of length $n$, ordered by increasing binary value. For $0\leq a\leq q$, let $\mathrm{IS}_a$ denote the initial segment of $\mathbb{Z}_2^n$ of size $a$. For $\mathbf{x} \in \mathbb{Z}_2^n$ and subsets $A_1,\ldots, A_k \subseteq \mathbb{Z}_2^n$, we write $r_{A_1+\dots+A_k}(\mathbf{x})$ for the number of ordered tuples in $A_1 \times \cdots \times A_k$ whose entries sum to $\mathbf{x}$.  These concepts are defined formally in Section~\ref{sec:prelim}.

\begin{thm}\label{thm:majorization}
Let $0\leq a_1,\dots,a_k\leq q$, and let $A_1,\dots,A_k\subseteq \mathbb{Z}_2^n$ satisfy $|A_i|=a_i$ for every $i\in [k]$. Then, for every
$B\subseteq\mathbb Z_2^n$, writing $b\coloneqq |B|$, we have
\begin{equation}
\label{eq:setwise-rearrangement}
    \sum_{\mathbf{x}\in B}
    r_{A_1+\cdots+A_k}(\mathbf{x})
    \leq
    \sum_{\mathbf{x}\in\mathrm{IS}_{b}}
    r_{\mathrm{IS}_{a_1}+\cdots+\mathrm{IS}_{a_k}}(\mathbf{x}).
\end{equation}
Consequently, suppose that $\lambda_1\geq\cdots\geq\lambda_q$ and $\mu_1\geq\cdots\geq\mu_q$ are the nonincreasing rearrangements of the multisets $\{r_{A_1+\cdots+A_k}(\mathbf{x}):\mathbf{x}\in \mathbb{Z}_2^n\}$ and
$\{r_{\mathrm{IS}_{a_1}+\cdots+\mathrm{IS}_{a_k}}(\mathbf{x}): \mathbf{x} \in \mathbb{Z}_2^n\}$, respectively. Then $$(\lambda_1,\dots,\lambda_q) \preccurlyeq (\mu_1,\dots,\mu_q).$$
\end{thm}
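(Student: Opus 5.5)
Observe first that the left-hand side of \eqref{eq:setwise-rearrangement} counts $(k+1)$-tuples $(\mathbf{x}_1,\dots,\mathbf{x}_k,\mathbf{x})\in A_1\times\cdots\times A_k\times B$ with $\mathbf{x}_1+\cdots+\mathbf{x}_k=\mathbf{x}$. In characteristic $2$ this is the same as $\mathbf{x}_1+\cdots+\mathbf{x}_k+\mathbf{x}=\mathbf{0}$. So, writing $A_{k+1}\coloneqq B$ and
\[
T(A_1,\dots,A_{k+1})\coloneqq\#\{(\mathbf{x}_1,\dots,\mathbf{x}_{k+1})\in A_1\times\cdots\times A_{k+1}:\mathbf{x}_1+\cdots+\mathbf{x}_{k+1}=\mathbf{0}\},
\]
the inequality says that $T$ is maximized, among sets of the prescribed sizes, when every $A_i$ is an initial segment. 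The statement is now fully symmetric in the $k+1$ sets, and I would prove this symmetric form by compression.

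For a coordinate direction $j\in[n]$, identify $\mathbb{Z}_2^n$ with $\mathbb{Z}_2^{n-1}\times\mathbb{Z}_2$. The $j$-compression of a set $A$ acts on each fibre $\{\mathbf{y},\mathbf{y}+\mathbf{e}_j\}$: if the fibre meets $A$ in exactly one point, that point is moved to the one with $j$-th coordinate $0$. More generally I would use the stronger ``$(n-1)$-dimensional'' compression. Here, for each $u\in\mathbb{Z}_2$, the slice $A^{(u)}\subseteq\mathbb{Z}_2^{n-1}$ is replaced by an initial segment of the same size, by induction on $n$. Also, slices are relabelled so that the larger slice sits at $u=0$.

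The key step is to show that $T$ does not decrease under these operations. Split $T$ according to the last coordinates $(u_1,\dots,u_{k+1})$ with $\sum u_i=0$. Each term is $T_{n-1}$ of the corresponding slices, so slice-wise replacement by initial segments does not decrease $T$ by the inductive hypothesis (applied with $k+1$ sets in dimension $n-1$). The reordering of the two slices is a two-point rearrangement. For fixed lower-dimensional data one compares sums $\sum_{\mathbf{u}}\prod_i f_i(u_i)$ over $\sum u_i=0$, and swapping to put the larger value at $0$ should not decrease this. This is a $\mathbb{Z}_2$ rearrangement fact, but it must be done jointly with the nested structure of initial segments. Because initial segments in $\mathbb{Z}_2^{n-1}$ are nested, $T_{n-1}(\mathrm{IS}_{c_1},\dots,\mathrm{IS}_{c_{k+1}})$ is a function $F(c_1,\dots,c_{k+1})$, nondecreasing in each argument. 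After these compressions each set is determined by a pair of slice sizes $(c_i^{0}\geq c_i^{1})$.

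I expect the main obstacle to be the final step: showing that among all sets whose two slices are initial segments in $\mathbb{Z}_2^{n-1}$, the true initial segment $\mathrm{IS}_{a_i}$ is optimal. Compressing in every direction stabilizes at ``down-sets in each coordinate compression'', which need not be initial segments. So I would then need a direct argument in terms of $F$. I would try to show that moving mass from the $1$-slice to the $0$-slice, i.e.\ $(c^0,c^1)\mapsto(c^0+1,c^1-1)$ until the $0$-slice is full ($2^{n-1}$) or the $1$-slice is empty, does not decrease $\sum_{\mathbf{u}}F(c^{u_1}_1,\dots,c^{u_{k+1}}_{k+1})$. This would be an exchange/supermodularity property of $F$, proved inductively alongside the main statement by strengthening the induction hypothesis.

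The majorization consequence then follows directly. Taking $B$ to be the $\ell$ points with largest $r_{A_1+\cdots+A_k}$ gives $\lambda_1+\cdots+\lambda_\ell\leq\sum_{\mathbf{x}\in\mathrm{IS}_\ell}r_{\mathrm{IS}_{a_1}+\cdots+\mathrm{IS}_{a_k}}(\mathbf{x})\leq\mu_1+\cdots+\mu_\ell$. The totals agree since both equal $a_1\cdots a_k$.
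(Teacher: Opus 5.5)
Your reduction to the symmetric count $T$ is correct, and so is your deduction of the majorization statement, which is exactly how the paper concludes. The slice-wise induction step is also sound. So is the relabelling step: once the slices are nested initial segments, it coincides with compression in the top coordinate.

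The gap is the step you flag yourself, and the strategy you sketch for it fails as stated. Take $n=2$, $k=2$ (strings $x_1x_0$) and $A_2=B=\{00,10\}$. The top-bit slices of this set have sizes $(1,1)$, and since it is a subgroup it is fixed by all of your operations. Now move mass in $A_1$ alone, from $\{00,10\}$ with slice sizes $(1,1)$ to $\{00,01\}=\mathrm{IS}_2$ with slice sizes $(2,0)$. Then $T$ drops from $4$ to $2$. So the one-set-at-a-time exchange inequality for $F$ is false, and no strengthening of the induction hypothesis can make that path monotone. The sets have to be moved coherently, and your proposal has no mechanism for doing so.

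The missing idea, which the paper uses, is to compress along every nonzero direction $\mathbf{v}$, not only coordinate vectors. With this, the induction on $n$ can be dropped entirely. For $\mathbf{v}\neq\mathbf{0}$, let $L_{\mathbf{v}}=\{\mathbf{x}: x_{j(\mathbf{v})}=0\}$, where $j(\mathbf{v})$ is the top nonzero position of $\mathbf{v}$; this is a complement of $\langle\mathbf{v}\rangle$. Pair each $\mathbf{h}\in L_{\mathbf{v}}$ with $\mathbf{h}+\mathbf{v}$, which has larger binary value. Then push every set down within each pair, simultaneously for all $k+1$ sets.

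Your fibre argument goes through verbatim for this compression. $T$ splits over $\mathbf{h}_1+\cdots+\mathbf{h}_{k+1}=\mathbf{0}$ in $L_{\mathbf{v}}$ into $\mathbb{Z}_2$-counts of fibres, and none of these decreases when the fibres are replaced by initial segments of $\mathbb{Z}_2$.

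The payoff is that a set fixed by every $\mathrm{C}_{\mathbf{v}}$ is an initial segment. Suppose $\mathbf{y}\notin A$, $\mathbf{x}\in A$ and $\operatorname{val}(\mathbf{y})<\operatorname{val}(\mathbf{x})$. Then $\mathbf{v}=\mathbf{x}+\mathbf{y}$ satisfies $\mathbf{y}\in L_{\mathbf{v}}$, so $\mathrm{C}_{\mathbf{v}}$ moves $\mathbf{x}$ to $\mathbf{y}$. The process terminates because $\sum_i\sum_{\mathbf{x}\in X_i}\operatorname{val}(\mathbf{x})$ strictly decreases at each nontrivial step.

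In your example, $\mathbf{v}=11$ sends all three copies of $\{00,10\}$ to $\{00,01\}$ at once and keeps $T=4$. These non-coordinate directions are exactly what bridges the gap between ``down-set in every coordinate direction'' and ``initial segment''.
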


Our proof of Theorem \ref{thm:majorization} is based on
the compression method, which has been used in several problems in
additive combinatorics. The basic idea is to replace a set by a more
ordered set of the same size while preserving or improving the relevant
extremal quantity. Green and Tao used such methods in their work
on the Freiman--Bilu theorem~\cite{GreenTao06} and later in the
finite field setting~\cite{GreenTao09}; see also
Even-Zohar~\cite{EvenZohar12} for compressions in
$\mathbb{Z}_2^n$. The compression used here is a one-dimensional coset compression closely related to these lexicographic compressions. The
main difference lies in the quantity being optimized: rather than controlling the size of a sumset or a doubling constant, we compress several sets simultaneously and show that the sum of additive representation counts over a prescribed set does not decrease.

A key strength of our majorization theorem is that it yields a range of new results in additive combinatorics. It allows us to continue Lev's programme in the $\mathbb{Z}_2^n$ setting. In \cite{Lev01}, Theorem 1 is used to establish Theorem 2. Following the same sequence of implications, we use our main theorem to establish a functional rearrangement inequality in $\mathbb{Z}_2^n$ analogous to Theorem 2 of \cite{Lev01}. Our theorem also yields significant results in two central areas: additive $k$-tuples and sumset estimates. We discuss these below.

\subsection{Additive \texorpdfstring{\ensuremath{k}}{k}-tuples in \texorpdfstring{\ensuremath{\mathbb{Z}_2^n}}{Z2n}}
%\subsection{Additive $k$-tuples in $\mathbb{Z}_2^n$}
Additive $k$-tuples play a key role in additive combinatorics.
\begin{definition}
\label{def: additive k-tuples}
Let $(\mathbf{G},+)$ be a finite abelian group, let $k\geq 2$, and let $A_1,\dots,A_k\subseteq \mathbf{G}$. An element $(a_1,\dots,a_k)\in A_1\times\cdots\times A_k$ is called an additive $k$-tuple if $a_1+\cdots+a_{k-1}=a_k$. Let
$$
r_k(A_1,\dots,A_k)\coloneqq 
\left|\left\{(a_1,\dots,a_k)\in A_1\times\cdots\times A_k:
a_1+\cdots+a_{k-1}=a_k\right\}\right|
$$
denote the number of additive $k$-tuples in $A_1\times\cdots\times A_k$. We write $r_k(A)\coloneqq r_k(A,\dots,A)$.
\end{definition}
In $\mathbb{Z}_2^n$, the equation $a_1+\cdots+a_{k-1}=a_k$ is equivalent to $a_1+\cdots+a_k=\mathbf{0}$. Therefore, $r_k(A_1,\dots,A_k)$ counts the zero-sum $k$-tuples in $A_1\times\cdots\times A_k$. 

For \(k=3\), the solutions to
\(x_1+x_2=x_3\) are the classical \emph{Schur triples}, whose study
goes back to Schur's theorem~\cite{Sch16}; a set \(A\subseteq \mathbf{G}\) with
\(r_3(A)=0\) is called \emph{sum-free}.  Determining the largest size of a sum-free subset of a finite abelian group was a problem posed by Erd\H{o}s~\cite{E65} and later solved by Green and
Ruzsa~\cite{GR05}. A natural quantitative refinement is to determine the extremal values of \(r_3(A)\) among subsets \(A\) of prescribed cardinality. This problem was resolved by Huczynska, Mullen and Yucas~\cite{HMY09} in
\(\mathbb{Z}_p\), and by Samotij and Sudakov~\cite{SS16} for several
classes of finite abelian groups. In \cite{HJJ24}, Huczynska, Jedwab and Johnson considered the mixed count \(r_3(A,B,B)\) for
\(A,B\subseteq\mathbb{Z}_p\). Higher-order quantities $r_k$ also arise naturally. For example, in $\mathbb{Z}_2^n$, the quantity $r_4(A)$ is the additive energy of $A$, while the general quantity $r_k(A_1,\ldots,A_k)$ is a higher-order convolution count. 

Consider the following extremal problem: for fixed integers $n_1,\dots,n_k$ satisfying $1\leq n_i\leq|\mathbf{G}|$ for every $1\leq i \leq k$, determine the minimum and maximum of $r_k(A_1,\dots,A_k)$ over all subsets $A_1,\dots,A_k\subseteq\mathbf{G}$ such that
$|A_1|=n_1,\dots,|A_k|=n_k$. When $\mathbf{G}=\mathbb{Z}_p$, the corresponding maximization result follows from Lev's result on the number of solutions to linear equations, stated in the abstract of \cite{Lev01}. This, in turn, follows from Theorem 1 of the same paper. In this paper, we establish the corresponding result for affine linear equations in $\mathbb{Z}_2^n$ (Theorem~\ref{thm:linear-equations}). The following maximization result for additive $k$-tuples is an immediate consequence of Theorem~\ref{thm:linear-equations}. In this setting, initial segments of $\mathbb{Z}_2^n$ play the role of centered arithmetic progressions in $\mathbb{Z}_p$.
\begin{thm}
\label{thm:higher-order-rearrangement}
If $A_1,\ldots,A_k\subseteq\mathbb Z_2^n$, then
\begin{equation}
\label{rearrangement inequality}
        r_k(A_1,\ldots,A_k)
        \leq
        r_k(\mathrm{IS}_{|A_1|},\ldots,\mathrm{IS}_{|A_k|}).
\end{equation}
\end{thm}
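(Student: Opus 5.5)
We derive Theorem~\ref{thm:higher-order-rearrangement} directly from Theorem~\ref{thm:majorization}, applied to the first $k-1$ sets with $B$ a singleton. In $\mathbb{Z}_2^n$ every element is its own inverse, so the condition $a_1+\cdots+a_{k-1}=a_k$ says that the $(k-1)$-fold sum equals $a_k$. Hence
$$
r_k(A_1,\ldots,A_k)=\sum_{\mathbf{x}\in A_k} r_{A_1+\cdots+A_{k-1}}(\mathbf{x}).
$$
The right-hand side is exactly the left-hand side of \eqref{eq:setwise-rearrangement}, with $k-1$ summand sets and $B=A_k$. (If $k=2$, the "sum" of one set is the set itself, and Theorem~\ref{thm:majorization} with one set reads $|A_1\cap B|\le |\mathrm{IS}_{a_1}\cap \mathrm{IS}_b|$, which holds because the right-hand side equals $\min(a_1,b)$.)

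Applying Theorem~\ref{thm:majorization} with $B=A_k$, so $b=|A_k|$, gives
$$
r_k(A_1,\ldots,A_k)\le \sum_{\mathbf{x}\in \mathrm{IS}_{|A_k|}} r_{\mathrm{IS}_{|A_1|}+\cdots+\mathrm{IS}_{|A_{k-1}|}}(\mathbf{x}).
$$
By the same identity read in reverse, the right-hand side equals $r_k(\mathrm{IS}_{|A_1|},\ldots,\mathrm{IS}_{|A_k|})$, which is \eqref{rearrangement inequality}.

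No step is a genuine obstacle; all the difficulty is contained in Theorem~\ref{thm:majorization}. The two points to check are the characteristic-$2$ rewriting of the additive $k$-tuple condition and the degenerate cases. If some $|A_i|=0$, both sides of \eqref{rearrangement inequality} are $0$. If $k=2$, the statement reduces to $|A_1\cap A_2|\le\min(|A_1|,|A_2|)$, as handled above. Alternatively, the result would follow from the affine-linear-equation version, Theorem~\ref{thm:linear-equations}, by taking all coefficients equal to $1$ and the constant equal to $0$; but the route through Theorem~\ref{thm:majorization} is the more direct one.
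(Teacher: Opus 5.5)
Your argument is correct and is essentially the paper's. The paper obtains the statement as the case $\mathrm{T}_i=\mathrm{id}$ of Theorem~\ref{thm:linear-equations}, i.e.\ by applying Theorem~\ref{thm:majorization} to all $k$ sets with $B=\{\mathbf{0}\}$, using $r_k(A_1,\ldots,A_k)=r_{A_1+\cdots+A_k}(\mathbf{0})$ in $\mathbb{Z}_2^n$ and $\mathrm{IS}_1=\{\mathbf{0}\}$; you instead apply it to $k-1$ sets with $B=A_k$, and the paper's choice avoids your separate $k=2$ case. Only fix your opening sentence, which says $B$ is a singleton, whereas your argument actually takes $B=A_k$.
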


Our majorization theorem has further applications to additive $k$-tuples. For odd $k$, the following result characterizes all maximizers of $r_k(A)$ over subsets $A\subseteq\mathbb{Z}_2^n$ of prescribed cardinality; the corresponding characterization of all minimizers is
given in Corollary~\ref{cor:equality-minimum}.
\begin{thm}
\label{thm:equality-maximum}
Let $k\geq 3$ be odd. Let $B\subseteq\mathbb{Z}_2^n$ have size $b>0$, and set $Q\coloneqq 2^{\lceil\log_2 b\rceil}.$
Then $B$ maximizes $r_k(X)$ among all $b$-element subsets
$X\subseteq\mathbb{Z}_2^n$ if and only if $B=K\setminus S$, where
$K\leq\mathbb{Z}_2^n$, $|K|=Q$, and $S\subseteq K$ satisfies $|S|=Q-b$ and $r_k(S)=0.$
\end{thm}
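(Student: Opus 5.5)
We need the maximizers of $r_k(X)$ for odd $k$. Since $k$ is odd, $r_k(X)$ counts the $(k-1)$-tuples in $X^{k-1}$ whose sum lies in $X$. Hence
$$r_k(X)=\sum_{\mathbf{x}\in X} r_{(k-1)X}(\mathbf{x}),$$
where $r_{(k-1)X}$ denotes $r_{X+\cdots+X}$ with $k-1$ summands. By Theorem~\ref{thm:majorization} with $B=X$ this is at most $r_k(\mathrm{IS}_b)$. So the maximum value is $r_k(\mathrm{IS}_b)$, which agrees with Theorem~\ref{thm:higher-order-rearrangement}.

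\textbf{Step 1: compute the value for sets of the form $K\setminus S$.} Let $K$ be a subgroup with $|K|=Q$ and $X=K\setminus S$. Expand $\mathbf{1}_X=\mathbf{1}_K-\mathbf{1}_S$ multilinearly in
$$r_k(X)=\sum_{x_1+\cdots+x_k=0}\prod_i \mathbf{1}_X(x_i).$$
A term with $j$ factors of $\mathbf{1}_S$ and $k-j$ factors of $\mathbf{1}_K$ (with $1\le j\le k-1$) counts tuples with $j$ entries in $S$ and the remaining entries free in $K$ subject to the zero-sum constraint. Because $k-j\ge1$, the constraint just fixes one $K$-entry, so the term equals $|S|^jQ^{k-j-1}$. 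The $j=k$ term is $(-1)^k r_k(S)=-r_k(S)$. Therefore
$$r_k(K\setminus S)=\sum_{j=0}^{k-1}\binom{k}{j}(-1)^j (Q-b)^j Q^{k-1-j}-r_k(S)=F(b,Q)-r_k(S),$$
where $F$ depends only on $b$ and $Q$.

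The initial segment $\mathrm{IS}_b$ lies in the subgroup $\mathrm{IS}_Q$, so it has this form with $S=\mathrm{IS}_Q\setminus \mathrm{IS}_b$. This $S$ is the terminal segment of $\mathrm{IS}_Q$, a subset of the nontrivial coset of $\mathrm{IS}_{Q/2}$ (since $|S|<Q/2$). A sum of an odd number of elements of that coset lies in the coset itself, not in the subgroup, but $0$ lies in the subgroup; hence $r_k(S)=0$. Consequently $r_k(\mathrm{IS}_b)=F(b,Q)$. This proves the "if" direction: any $K\setminus S$ with $r_k(S)=0$ attains the maximum.

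\textbf{Step 2: the "only if" direction.} Suppose $B$ is a maximizer. Then equality holds in $\sum_{\mathbf{x}\in B} r_{(k-1)B}(\mathbf{x})\le r_k(\mathrm{IS}_b)$. The main task is to show that $B$ lies in a subgroup of size $Q$. Once that is done, writing $B=K\setminus S$ and applying Step~1 gives $r_k(S)=0$.

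To show $B$ lies in such a subgroup, I would first run the compression proof of Theorem~\ref{thm:majorization} and track when each compression step is strict. Splitting along a hyperplane $H$ into $B_0\cup B_1$, I would expect the inequality to be strict unless the coset structure is already "nested". An induction on $n$ should then force $B\subseteq K$.

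If tracking the equality cases through the compression proves too delicate, I would use a Fourier alternative. For $b\le Q/2$ in some degenerate regimes this needs care, but the general computation is
$$r_k(B)=q^{-1}\sum_\chi \widehat{\mathbf{1}_B}(\chi)^k.$$
Since $k$ is odd, one wants the mass of $\widehat{\mathbf{1}_B}$ concentrated on characters $\chi$ with $\widehat{\mathbf{1}_B}(\chi)=b$ (up to sign), i.e. those constant on $B$.

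A more combinatorial route, which I would try first, is to pick $\mathbf{x}_0\in B$ and translate appropriately (noting that $r_k$ is not translation-invariant for $k$ odd). Then I would bound $r_k(B)\le |B|\cdot$ (max representation count) and compare with the exact value $F(b,Q)$, forcing $\langle B\rangle$ to have size at most $Q$.

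I expect Step~2, namely extracting the structural conclusion from equality, to be the main obstacle.
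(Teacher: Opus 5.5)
\textbf{Assessment: genuine gap.} Step~1 and the ``if'' direction are fine, but the ``only if'' direction is not proved.

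\textbf{What is correct.} Your Step~1 matches the paper. The multilinear expansion of $\mathbf{1}_K-\mathbf{1}_S$ is the complement identity applied inside $K$, giving $r_k(K\setminus S)=\frac{b^k+(Q-b)^k}{Q}-r_k(S)$. Writing $\mathrm{IS}_b=\mathrm{IS}_Q\setminus S$ with $S$ in the upper half of $\mathrm{IS}_Q$ shows this value is the maximum, which settles the ``if'' direction. Your reduction of the ``only if'' direction to showing that a maximizer lies in \emph{some} subgroup of order $Q$ is also sound.

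\textbf{The gap.} That reduction is the entire content of the characterization, and you do not prove it. The hyperplane-splitting induction and the Fourier heuristic are not arguments as stated. The route via $r_k(B)\le b\max_{\mathbf{x}}r_{(k-1)B}(\mathbf{x})$ cannot work in general: for $k=3$ one always has $b\max_{\mathbf{x}}r_{B+B}(\mathbf{x})=b^2$, which exceeds the maximum $\frac{b^3+(Q-b)^3}{Q}$ whenever $Q/2<b<Q$. Note also that the subgroup is not preserved along a compression chain. Undoing a compression in a direction $\mathbf v\notin K$ can replace $K$ by a different subgroup, namely the graph of a linear functional on $K$. So ``force $B\subseteq K$'' for a fixed $K$ is the wrong target, and any equality-tracking argument must allow the subgroup to change.

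\textbf{The missing idea.} You need an equality analysis of a single compression step, propagated backwards along the chain $B=B_0\to B_1\to\cdots\to B_t=\mathrm{IS}_b$ from the proof of Theorem~\ref{thm:majorization}. Since $B$ is a maximizer, every step has $r_k(B_{i-1})=r_k(B_i)$. The statement to prove is: if $r_k(B)=r_k(\mathrm{C}_{\mathbf v}(B))$ and $\mathrm{C}_{\mathbf v}(B)=K\setminus S$ with $|K|=Q$ and $r_k(S)=0$, then $B=K'\setminus S'$ of the same type. There are two cases.

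\emph{Case $\mathbf v\in K$.} Each pair $\{\mathbf h,\mathbf h+\mathbf v\}$ lies inside or outside $K$, so $B\subseteq K$, and your Step~1 finishes.

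\emph{Case $\mathbf v\notin K$.}
\begin{enumerate}
\item Here $\mathrm{C}_{\mathbf v}(B)\subseteq L_{\mathbf v}$, so $|K\cap L_{\mathbf v}|\ge b>Q/2$ forces $K\subseteq L_{\mathbf v}$.
\item Then $B$ consists of exactly one point $\mathbf h+f(\mathbf h)\mathbf v$ over each $\mathbf h\in D\coloneqq K\setminus S$. Note $\mathbf 0\in D$, since $r_k(S)=0$ excludes $\mathbf 0\in S$.
\item Compare the fibre decompositions of $r_k(B)$ and $r_k(\mathrm{C}_{\mathbf v}(B))$ from Lemma~\ref{lem: reduction from Z_2^n to Z_2} term by term. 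The fibre tuple $(\mathbf x,\mathbf y,\mathbf x+\mathbf y,\mathbf 0,\ldots,\mathbf 0)$ contributes $0$ instead of $1$ on the $B$ side unless $f(\mathbf x+\mathbf y)=f(\mathbf x)+f(\mathbf y)$; this uses that $k-3$ is even. So equality forces $f(\mathbf x+\mathbf y)=f(\mathbf x)+f(\mathbf y)$ whenever $\mathbf x,\mathbf y,\mathbf x+\mathbf y\in D$.
\item Since $r_k(S)=0$ forces $S$ to be sum-free, any zero-sum relation among elements of $D$ can be shortened by merging two of its terms into an element of $D$. Hence $f$ extends to a linear functional $\ell$ on $K$.
\item Then $B$ is the image of $K\setminus S$ under the injective linear map $\mathbf h\mapsto\mathbf h+\ell(\mathbf h)\mathbf v$. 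That is, $B=K'\setminus S'$ with $r_k(S')=r_k(S)=0$.
\end{enumerate}

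Without this step, or some substitute stability argument, your proposal establishes only the value of the maximum and the ``if'' direction.
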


Together with Corollary~\ref{cor:equality-minimum}, this gives a complete description of the extremizers of $r_k(A)$ for odd $k$. In particular, Corollary~\ref{cor:equality-minimum} extends the minimization result of
Samotij and Sudakov~\cite{SS16} for $r_3(A)$ in $\mathbb{Z}_2^n$ to every odd $k\geq3$. Their proof for \(k=3\) combines an eigenvalue estimate of Alon and Chung~\cite{alon} with a character-theoretic analysis of Cayley graphs and an induction on the dimension, and does not appear to extend directly to higher \(k\). In contrast, our argument based on the majorization theorem applies uniformly to every odd \(k\). The restriction to odd $k$ is important for our method: for even $k$, already the case $k=4$ is closely connected to the open problem of determining the maximum size of a Sidon set in $\mathbb{Z}_2^n$; see Remark~\ref{rem:evenk}.

\subsection{Sumset estimates}
Sumset estimates form another central topic in additive combinatorics. For subsets $A_1,\ldots,A_k$ of a finite abelian group, their sumset is defined by
\begin{equation*}
    A_1+\cdots+A_k
    \coloneqq
    \{a_1+\cdots+a_k:a_1\in A_1,\dots,a_k\in A_k\}.
\end{equation*}
A fundamental problem is to obtain lower bounds for $|A_1+\dots+A_k|$ in terms of the cardinalities of the summand sets. The Cauchy--Davenport theorem~\cite{Cauchy1813,Davenport1935} states that, for any nonempty subsets $A,B\subseteq \mathbb{Z}_p$,
\begin{equation*}
    |A+B|\geq \min\{p,|A|+|B|-1\}.
\end{equation*} 

Pollard~\cite{Pollard1974} strengthened the Cauchy--Davenport theorem by considering the representation function
\begin{equation*}
    r_{A+B}(x)\coloneqq |\{(a,b)\in A\times B:a+b=x\}|
\end{equation*}
and the number of $\tau$-popular sums $$N_{\tau}(A,B)\coloneqq |\{x\in \mathbb{Z}_p:r_{A+B}(x)\geq \tau\}|.$$  Pollard proved  the following generalization of the Cauchy-Davenport theorem: for any nonempty subsets $A,B\subseteq \mathbb{Z}_p$ and every $1\leq t\leq \min\{|A|,|B|\}$,
\begin{equation*}
    \sum_{\tau=1}^t N_{\tau}(A,B)\geq t\min\{p,|A|+|B|-t\}.
\end{equation*}
In a subsequent paper \cite{Pol75}, Pollard reformulated this result in language more closely aligned with majorization, and Lev used this formulation in \cite{Lev01} to establish his Theorem 1.

For $\mathbb{Z}_2^n$, an analogue of the Cauchy--Davenport theorem is due to Bollob\'as
and Leader~\cite{BollobasLeader96}, who showed that
\[
    |A+B|
    \geq
    |\mathrm{IS}_{|A|}+\mathrm{IS}_{|B|}|
\]
for all $A,B\subseteq\mathbb{Z}_2^n$; more generally, their result
applies to abelian groups of prime-power order. We also note that, in the $\mathbb{Z}_2^n$ setting, Yuzvinsky \cite{Y81} established the lower bound $|A| \circ |B|$, where $\circ$ denotes the Hopf--Stiefel--Pfister function. 

We apply our majorization theorem to derive a
Pollard-type inequality governing the distribution of popular sums associated with higher-order representation functions in $\mathbb{Z}_2^n$. In particular, this yields the following extension of the well-known Bollob\'as--Leader inequality from two summands to an arbitrary number of summands. 
\begin{cor}
\label{cor:extension of Bollobas-Leader}
    If $A_1,\dots,A_k\subseteq \mathbb{Z}_2^n$, then
    \begin{equation*}
        |A_1+\dots+A_k| \geq |\mathrm{IS}_{|A_1|}+\dots+\mathrm{IS}_{|A_k|}|.
    \end{equation*}
\end{cor}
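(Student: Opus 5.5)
Corollary~\ref{cor:extension of Bollobas-Leader} follows directly from the setwise inequality \eqref{eq:setwise-rearrangement} in Theorem~\ref{thm:majorization}, with $B$ chosen to be the sumset itself. We may assume every $A_i$ is nonempty; otherwise both sumsets are empty and there is nothing to prove. Put $a_i\coloneqq|A_i|$, let $S\coloneqq A_1+\cdots+A_k$, and let $T\coloneqq \mathrm{IS}_{a_1}+\cdots+\mathrm{IS}_{a_k}$. The approach is to compare the total mass $\prod_i a_i$ of the two representation functions, which is the same in both cases.

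\textbf{Step 1 (total mass).} For any sets we have $\sum_{\mathbf{x}\in\mathbb{Z}_2^n} r_{A_1+\cdots+A_k}(\mathbf{x})=\prod_{i=1}^k a_i$. The function $r_{A_1+\cdots+A_k}$ is supported exactly on $S$, so $\sum_{\mathbf{x}\in S} r_{A_1+\cdots+A_k}(\mathbf{x})=\prod_i a_i$. In the same way, $\sum_{\mathbf{x}\in T} r_{\mathrm{IS}_{a_1}+\cdots+\mathrm{IS}_{a_k}}(\mathbf{x})=\prod_i a_i$. Any subset of $\mathbb{Z}_2^n$ that misses a point of $T$ captures strictly less than this total, because $r_{\mathrm{IS}_{a_1}+\cdots+\mathrm{IS}_{a_k}}$ is positive on every point of $T$.

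\textbf{Step 2 (apply the theorem with $B=S$).} Let $b\coloneqq|S|$. By \eqref{eq:setwise-rearrangement},
\[
\prod_{i=1}^k a_i=\sum_{\mathbf{x}\in S} r_{A_1+\cdots+A_k}(\mathbf{x})\leq \sum_{\mathbf{x}\in \mathrm{IS}_b} r_{\mathrm{IS}_{a_1}+\cdots+\mathrm{IS}_{a_k}}(\mathbf{x})\leq \prod_{i=1}^k a_i .
\]
Equality therefore holds throughout. In particular $\mathrm{IS}_b$ captures the full mass of $r_{\mathrm{IS}_{a_1}+\cdots+\mathrm{IS}_{a_k}}$, so by Step 1 we get $T\subseteq \mathrm{IS}_b$. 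Hence $|T|\leq b=|S|$, which is the claim.

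No step here is a real obstacle once Theorem~\ref{thm:majorization} is available. The only point needing care is the strict-positivity argument in Step 1, which turns equality of the sums into the containment $T\subseteq\mathrm{IS}_b$.

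Alternatively, the majorization statement gives the same conclusion. The vector $(\lambda_j)$ has exactly $|S|$ nonzero entries and $(\mu_j)$ has exactly $|T|$ nonzero entries, and both sum to $\prod_i a_i$. Take $\ell=|S|$ in the majorization inequalities. The first $\ell$ entries of $\lambda$ already sum to the total, so the first $\ell$ entries of $\mu$ must as well. Since the entries of $\mu$ are nonnegative and nonincreasing, $\mu_j=0$ for $j>\ell$, giving $|T|\leq |S|$. The case $\ell=q$ is covered directly by the equality of total sums.
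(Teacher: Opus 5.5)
Your proof is correct, and it takes a more direct route than the paper. The paper obtains the corollary as the case $N=1$ of its Pollard-type inequality (Theorem~\ref{thm: Pollard for Z2n}). That inequality comes from the convexity inequality (Theorem~\ref{thm: main convexity inequality}), that is, from the majorization in Theorem~\ref{thm:majorization} combined with Karamata's inequality for the convex function $-\min\{x,1\}$, together with the identity $\sum_{\mathbf{x}}\min\{r_{A_1+\cdots+A_k}(\mathbf{x}),1\}=|A_1+\cdots+A_k|$.

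You bypass Karamata and the convexity theorem entirely. You take $B=A_1+\cdots+A_k$ in \eqref{eq:setwise-rearrangement} and observe that both representation functions have total mass $\prod_i a_i$ and are positive exactly on their sumsets. This forces $\mathrm{IS}_b$, with $b=|A_1+\cdots+A_k|$, to contain all of $\mathrm{IS}_{a_1}+\cdots+\mathrm{IS}_{a_k}$. Your second variant, taking $\ell=|A_1+\cdots+A_k|$ in the majorization, is the same idea; it is a hands-on instance of Karamata's inequality for this particular piecewise-linear function.

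Your route gives a short, self-contained proof that uses only the setwise inequality from Theorem~\ref{thm:majorization}. The paper's route gets the corollary for free from the whole family of bounds on $\sum_{\tau=1}^N|S_\tau(A_1,\dots,A_k)|$ for every $N$, and from the convexity inequality for arbitrary convex $\Phi$. Your argument does not give those without further work, for example via the identity $\sum_{\mathbf{x}}\min\{r(\mathbf{x}),N\}=\min_{B}\bigl(N|B|+\sum_{\mathbf{x}\notin B}r(\mathbf{x})\bigr)$.
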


A noteworthy feature of our approach is that the direction of the argument is the reverse of that used in $\mathbb{Z}_p$. In $\mathbb{Z}_p$, Pollard's theorem is used to prove the corresponding majorization theorem. In $\mathbb{Z}_2^n$, by contrast, we first prove our majorization theorem (Theorem \ref{thm:majorization}) independently using compression. We then combine this result with Karamata's inequality to establish a convexity theorem, which we use to deduce a $\mathbb{Z}_2^n$ analogue of Pollard's theorem.

\subsection{Organization of the paper}
The paper is organized as follows. In Section~\ref{sec:prelim}, we introduce the necessary notation, define the relevant concepts, and record basic properties of $r_k$. In Section~\ref{sec:compression}, we develop the compression method and prove the majorization theorem, Theorem~\ref{thm:majorization}. Section~\ref{sec:app} is devoted to applications of this theorem. We first prove a precise analogue of Lev's result on linear equations, Theorem \ref{thm:linear-equations}. We then establish a convexity inequality and a Pollard-type inequality for $\mathbb{Z}_2^n$, Theorem~\ref{thm: Pollard for Z2n}, from which we derive the Bollob\'as--Leader extension, Corollary~\ref{cor:extension of Bollobas-Leader}. This section also contains results on the monotonicity of representation functions and a functional rearrangement inequality. Finally, in Section~\ref{sec:extremal}, we determine the extremal values of $r_k(A)$ for odd $k$ in Propositions \ref{cor:odd-initial-segment-value} and \ref{prop:diagonal-minimum}. We then characterize all maximizers and minimizers of $r_k(A)$ for odd $k$ in Theorem~\ref{thm:equality-maximum} and Corollary~\ref{cor:equality-minimum}, respectively.

\section{Notation and preliminaries}\label{sec:prelim}

In this section, we introduce the notation and definitions used throughout the paper.

We write $\mathbb{N}$ for the set of positive integers and $\mathbb{N}_0$ for the set of non-negative integers. For each $n\in\mathbb{N}$, let $[n]\coloneqq\{1,\dots,n\}$. Throughout the paper, for a fixed $n$, we write $q\coloneqq |\mathbb{Z}_2^n| =2^n$.

Since $\mathbb{Z}_2^n$ is the main ambient group considered in this paper, the notation $H\leq \mathbb{Z}_2^n$ indicates that $H$ is a subgroup of $\mathbb{Z}_2^n$. If $X,Y\subseteq \mathbb{Z}_2^n$ are disjoint, we write $X\sqcup Y$ for their union. We denote the complement of $X$ in $\mathbb{Z}_2^n$ by $X^c\coloneqq \mathbb{Z}_2^n\setminus X$. For $A\subseteq\mathbb{Z}_2^n$, let $\mathds{1}_A$ denote the indicator function of $A$; that is, $\mathds{1}_A(\mathbf{x})=1$ if $\mathbf{x}\in A$ and $0$ otherwise.

We identify $\mathbb{Z}_2^n$ with the set of binary strings $\mathbf{x}=x_{n-1}\dots x_0$ of length $n$ and define the \emph{binary value} function by
\begin{equation*}
\operatorname{val}:\mathbb{Z}_2^n\to \mathbb{N}_0, \qquad \operatorname{val}(\mathbf{x})\coloneqq \sum_{i=0}^{n-1}2^ix_i.
\end{equation*}
We order the elements of $\mathbb{Z}_2^n$ by increasing binary value. Accordingly, we write $$\mathbb{Z}_2^n = \{\mathbf{x}_0,\dots, \mathbf{x}_{q-1}\},$$ where $\operatorname{val}(\mathbf{x}_i)=i$ for every $0\leq i\leq q-1$. We denote the zero element of $\mathbb{Z}_2^n$ by $\mathbf{0}$ and the element whose coordinates are all equal to 1 by $\mathbf{1}$. For $i \in [n]$, define $\mathbf{e}_i\coloneqq \underbrace{0\dots0}_{n-i}1\underbrace{0\dots0}_{i-1}\in \mathbb{Z}_2^n$. Thus, $\operatorname{val}(\mathbf{e}_i)=2^{i-1}$.

\begin{definition}
Let $n\in\mathbb{N}$ and $a\in \mathbb{N}_0$ satisfy $a\leq q$. 
\begin{enumerate}[label=\arabic*)]
    \item  The \emph{initial segment} of size $a$ in $\mathbb{Z}_2^n$ is denoted by $\mathrm{IS}_a$. For $a\geq 1$, it is given by $\mathrm{IS}_a\coloneqq\{\mathbf{x}_0,\dots, \mathbf{x}_{a-1}\}$, and we set $\mathrm{IS}_0 \coloneqq \varnothing$.
    
    \item The \emph{final segment} of size $a$ in $\mathbb{Z}_2^n$ is denoted by $\mathrm{FS}_a$. For $a\geq 1$, it is given by $\mathrm{FS}_a\coloneqq\{\mathbf{x}_{q-a},\dots, \mathbf{x}_{q-1}\}$, and we set $\mathrm{FS}_0 \coloneqq \varnothing$.
\end{enumerate}
\end{definition}

For every $0\leq a \leq q$, we have $\mathrm{IS}_a \sqcup \mathrm{FS}_{q-a}=\mathbb{Z}_2^n$, and hence 
$\mathrm{IS}_a^c=\mathrm{FS}_{q-a}$. Moreover, $\mathrm{IS}_a + \mathbf{1} = \mathrm{FS}_a$. These identities will be used repeatedly. For example, when $n=3$,
 \begin{equation*}
\mathrm{IS}_4=\{000,001,010,011\},\qquad \mathrm{FS}_4=\{100,101,110,111\}.
\end{equation*}
Thus, 
\begin{equation*}
\mathrm{IS}_4 \sqcup \mathrm{FS}_4 = \mathbb{Z}_2^n \qquad \text{and} \qquad \mathrm{IS}_4 + \mathbf{1}=\mathrm{FS}_4.
\end{equation*}

Let $k\geq 2$ and $A_1,\dots,A_k\subseteq \mathbb{Z}_2^n$. For each $\mathbf{x}\in \mathbb{Z}_2^n$, define the \emph{representation function} by
\begin{equation*}
    r_{A_1+\dots+A_k}(\mathbf{x})\coloneqq |\{(a_1,\dots,a_k)\in A_1\times \dots \times A_k: a_1+\dots+a_k = \mathbf{x}\}|.
\end{equation*}
The following identity, which will be used repeatedly throughout the paper, relates the representation function $r_{A_1+\dots+A_k}$ to the quantity $r_{k+1}$ defined in Definition \ref{def: additive k-tuples}. For any $A_1,\dots,A_{k+1}\subseteq\mathbb{Z}_2^n$, we have
\begin{equation*}
r_{k+1}(A_1,\dots,A_{k+1}) = \sum_{\mathbf{x}\in A_{k+1}}r_{A_1+\dots+A_k}(\mathbf{x}).
\end{equation*}

We next record some basic but useful properties of $r_k(A_1,\dots,A_k)$, all of which follow directly from its definition. 
\begin{lemma}
\label{lem: properties of r(A,B,C)}
    Suppose that $A_1,\dots,A_k\subseteq \mathbb{Z}_2^n$. Then the following statements hold.
    \begin{enumerate}[label=\arabic*)]
        \item For every $i\in [k]$ and every $A'_i\subseteq \mathbb{Z}_2^n$ disjoint from $A_i$, $$r_k(A_1, \dots, A_i\sqcup A'_i,\dots A_k) = r_k(A_1, \dots, A_i,\dots, A_k) + r_k(A_1, \dots, A'_i,\dots, A_k).$$
        \item For every $i\in[k]$, $$r_k(A_1, \dots, A_i,\dots, A_k) + r_k(A_1, \dots, A^c_i,\dots, A_k) = \prod_{\substack{1\leq j\leq k \\ j\neq i}}|A_j|.$$
    \end{enumerate}
\end{lemma}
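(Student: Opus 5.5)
Both statements follow from the identity
\[
r_k(A_1,\dots,A_k)=\sum_{\mathbf{x}\in A_k} r_{A_1+\dots+A_{k-1}}(\mathbf{x})
\]
recorded above, after moving the distinguished set into the last slot.

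First, I will note that $r_k$ is symmetric in its arguments. In $\mathbb{Z}_2^n$, the condition $a_1+\cdots+a_{k-1}=a_k$ is equivalent to $a_1+\cdots+a_k=\mathbf{0}$, and this condition is invariant under permuting coordinates. So permuting the sets $A_1,\dots,A_k$ induces a bijection between the corresponding sets of additive $k$-tuples. This lets me assume that the index $i$ in either statement is $k$.

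For part 1), with $i=k$, the identity gives
\[
r_k(A_1,\dots,A_{k-1},A_k\sqcup A_k')=\sum_{\mathbf{x}\in A_k\sqcup A_k'} r_{A_1+\dots+A_{k-1}}(\mathbf{x}).
\]
Because $A_k$ and $A_k'$ are disjoint, this sum splits into the sum over $A_k$ plus the sum over $A_k'$. These are exactly $r_k(A_1,\dots,A_k)$ and $r_k(A_1,\dots,A_{k-1},A_k')$, which proves the additivity.

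For part 2), I apply part 1) with $A_k'=A_k^c$, so that $A_k\sqcup A_k^c=\mathbb{Z}_2^n$. It then remains to evaluate $r_k(A_1,\dots,A_{k-1},\mathbb{Z}_2^n)$. Every choice of $(a_1,\dots,a_{k-1})\in A_1\times\cdots\times A_{k-1}$ determines a unique $a_k=a_1+\cdots+a_{k-1}$, and this $a_k$ automatically lies in $\mathbb{Z}_2^n$. Hence the count is $\prod_{j\neq k}|A_j|$. Equivalently, $\sum_{\mathbf{x}} r_{A_1+\dots+A_{k-1}}(\mathbf{x})=\prod_{j<k}|A_j|$.

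There is no real obstacle here. The only point that needs care is the symmetry used to reduce to $i=k$, and it holds precisely because $-1=1$ in $\mathbb{Z}_2^n$.
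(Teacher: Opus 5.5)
Your proof is correct and is essentially the paper's argument: the paper gives no separate proof, saying only that both parts follow directly from the definition of $r_k$, and your reduction to $i=k$ followed by splitting $\sum_{\mathbf{x}\in A_k} r_{A_1+\cdots+A_{k-1}}(\mathbf{x})$ is exactly that direct verification. The symmetry step is optional. For any $i$, the additive $k$-tuples split according to whether the $i$-th entry lies in $A_i$ or in $A_i'$, and the other $k-1$ entries determine the $i$-th entry uniquely. This direct count also avoids needing the one-summand identity $r_{A_1}=\mathds{1}_{A_1}$ when $k=2$.
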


\section{The compression method and proof of the majorization theorem}\label{sec:compression}

In this section, we study higher-order additive tuples in $\mathbb Z_2^n$ and introduce the compression method that will serve as our main tool.

For convenience, for each $m\in\{0,1,2\}$, we denote the initial segment of $\mathbb{Z}_2$ of size $m$ by $\mathrm{I}_m$. Thus,
$$\mathrm{I}_0=\varnothing, \qquad
    \mathrm{I}_1=\{0\}, \qquad
    \mathrm{I}_2=\mathbb{Z}_2.$$

We now define the compression operation. Let \(\mathbf{v} =v_{n-1}\dots v_0\in\mathbb Z_2^n\setminus \{\mathbf{0}\} \), and define
$$j(\mathbf{v}):=\max\{0\leq j \leq n-1:v_j=1\}.$$  Let
\begin{equation*}
        L_\mathbf{v}:=\left\{\mathbf{x}\in\mathbb Z_2^n:x_{j(\mathbf{v})}=0\right\}.
\end{equation*}
It is clear that $L_\mathbf{v} \leq \mathbb{Z}_2^n$. Moreover,
\begin{equation}
\label{partition Z_2^n}
\mathbb{Z}_2^n = \bigsqcup_{\mathbf{h}\in L_\mathbf{v}} \{\mathbf{h}, \mathbf{h}+\mathbf{v}\},    
\end{equation}
and, for every $\mathbf{h}\in L_{\mathbf{v}}$,
$$\text{val}(\mathbf{h}) < \text{val}(\mathbf{h}+\mathbf{v}).$$

For \(A\subseteq\mathbb Z_2^n\), we define the $\mathbf{v}$-compression of $A$, denoted by \(\mathrm{C}_\mathbf{v}(A)\), as follows. Within each pair
$\{\mathbf{h}, \mathbf{h}+\mathbf{v}\}$, the compression preserves the number of elements of \(A\) but places them as low as possible in the ordering of \(\mathbb Z_2^n\):
\begin{equation*}
        \varnothing \mapsto\varnothing,\qquad
        \{\mathbf{h}\}\mapsto\{\mathbf{h}\},\qquad
        \{\mathbf{h}+\mathbf{v}\}\mapsto\{\mathbf{h}\},\qquad
        \{\mathbf{h},\mathbf{h}+\mathbf{v}\}\mapsto\{\mathbf{h},\mathbf{h}+\mathbf{v}\}.
\end{equation*}

More formally, since
\begin{equation*}
    A = \bigsqcup_{\mathbf{h}\in L_\mathbf{v}} A\cap \{\mathbf{h},\mathbf{h}+\mathbf{v}\} ,
\end{equation*}
we define
\begin{equation*}
    \mathrm{C}_\mathbf{v}(A) \coloneqq \bigsqcup_{\mathbf{h}\in L_\mathbf{v}} \mathrm{IS}_{|A\cap \{\mathbf{h},\mathbf{h}+\mathbf{v}\}|}(\{\mathbf{h},\mathbf{h}+\mathbf{v}\}),
\end{equation*}
where $ \mathrm{IS}_{|A\cap \{\mathbf{h},\mathbf{h}+\mathbf{v}\}|}(\{\mathbf{h},\mathbf{h}+\mathbf{v}\})$ denotes the initial segment of $\{\mathbf{h},\mathbf{h}+\mathbf{v}\}$ having cardinality $|A\cap \{\mathbf{h},\mathbf{h}+\mathbf{v}\}|$.

The $\mathbf{v}$-compression of a set $A\subseteq \mathbb{Z}_2^n$ has the following two useful properties:
\begin{enumerate}[label=\arabic*)]
    \item The cardinality of $A$ is preserved under compression; that is, $|\mathrm{C}_\mathbf{v}(A)| = |A|$.
    \item Under compression, no element of $A$ moves upward in the ordering of $\mathbb Z_2^n$ defined above.
\end{enumerate}

We next introduce the notion of a fibre.
\begin{definition}\label{def:fibre}
    Let $\mathbf{v}\in\mathbb{Z}_2^n\setminus\{\mathbf{0}\}$,
    $\mathbf{h}\in L_{\mathbf{v}}$, and
    $A\subseteq\mathbb{Z}_2^n$. The fibre of $A$ over $\mathbf{h}$
    is defined by
    \begin{equation*}
        A_{\mathbf{h}}
        \coloneqq
        \{\alpha\in\mathbb{Z}_2:
        \mathbf{h}+\alpha\mathbf{v}\in A\}.
    \end{equation*}
\end{definition}

Clearly, $A_\mathbf{h}\subseteq \mathbb{Z}_2$. Moreover, the definition of compression gives
\begin{equation}
\label{h-fibre of C_v(X)}
(\mathrm{C}_\mathbf{v}(A))_\mathbf{h} = \mathrm{I}_{|A
_\mathbf{h}|}.    
\end{equation}

\begin{lemma}
\label{lem: monotonicity of r_k for Z_2}
If $A_1,\dots,A_k\subseteq \mathbb{Z}_2$, then
\begin{equation}
\label{inequality for r_k(Z_2)}
    r_k(A_1,\dots,A_k) \leq r_k(\mathrm{I}_{|A_1|},\dots, \mathrm{I}_{|A_k|}).
\end{equation}
\end{lemma}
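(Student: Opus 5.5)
We prove the inequality directly in $\mathbb{Z}_2$, where every $A_i$ is one of $\varnothing$, $\{0\}$, $\{1\}$ or $\mathbb{Z}_2$. The idea is to reduce to the case in which no $A_i$ equals $\varnothing$ or $\mathbb{Z}_2$, and then compare a single zero-sum condition.

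First, if some $|A_i|=0$, both sides of \eqref{inequality for r_k(Z_2)} vanish and there is nothing to prove. Next, let $F\coloneqq\{i: A_i=\mathbb{Z}_2\}$; these are exactly the indices with $|A_i|=2$, so they are the same on both sides. Fix any choice of the coordinates $a_i$ with $i\notin F$.
\begin{itemize}
\item If $F\neq\varnothing$, then for each such choice the number of completions in $\prod_{i\in F}\mathbb{Z}_2$ with total sum $0$ is exactly $2^{|F|-1}$, because one coordinate in $F$ is determined by the others.
\item Consequently both sides equal $2^{|F|-1}\prod_{i\notin F}|A_i|$, and these products coincide since $|A_i|=|\mathrm{I}_{|A_i|}|$.
\end{itemize}
So equality holds whenever $F\neq\varnothing$.

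It remains to treat the case $F=\varnothing$ with no empty set, so that every $A_i$ is a singleton $\{\alpha_i\}$. Then
\[
r_k(A_1,\dots,A_k)=\mathds{1}[\alpha_1+\cdots+\alpha_k=0]\leq 1.
\]
On the other side, every $\mathrm{I}_1=\{0\}$, so $r_k(\mathrm{I}_1,\dots,\mathrm{I}_1)=1$ because $0+\cdots+0=0$. This gives the inequality.

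Alternatively, one can argue by Fourier analysis on $\mathbb{Z}_2$:
\[
r_k(A_1,\dots,A_k)=\tfrac12\Bigl(\prod_i|A_i|+\prod_i \widehat{\mathds{1}_{A_i}}(1)\Bigr),
\]
where $\widehat{\mathds{1}_{A_i}}(1)=\sum_{a\in A_i}(-1)^a$. The first term depends only on the sizes $|A_i|$. For the second term, $|\widehat{\mathds{1}_{A_i}}(1)|\le |A_i|$ with equality when $A_i=\mathrm{I}_{|A_i|}$ and $|A_i|\le 1$; moreover, for initial segments every factor is nonnegative. Hence the second term is maximized by initial segments, which again gives \eqref{inequality for r_k(Z_2)}.

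There is no real obstacle here. The only point needing care is the case bookkeeping: correctly handling the coordinates equal to $\mathbb{Z}_2$ and the empty sets.
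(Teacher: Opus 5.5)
Your main case analysis is correct and is exactly the paper's proof: empty set, then some $A_i=\mathbb{Z}_2$ forcing equality with common value $2^{|F|-1}\prod_{i\notin F}|A_i|$, then all singletons. In the optional Fourier remark, the comparison you need is $|\widehat{\mathds{1}_{A_i}}(1)|\le\widehat{\mathds{1}_{\mathrm{I}_{|A_i|}}}(1)$, which holds because both sides equal $0,1,0$ for $|A_i|=0,1,2$; the bound $|\widehat{\mathds{1}_{A_i}}(1)|\le|A_i|$ you quote is too weak when $|A_i|=2$, though that case is harmless since then $A_i=\mathrm{I}_2$ and the factors coincide.
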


\begin{proof}[Proof of Lemma \ref{lem: monotonicity of r_k for Z_2}]
If $A_j=\varnothing$ for some $j\in [k]$, then $\mathrm{I}_{|A_j|} = \varnothing$, so both sides of \eqref{inequality for r_k(Z_2)} are equal to $0$.

Now suppose that none of $A_1,\ldots, A_k$ is empty. It remains to consider the following two cases:
    \begin{enumerate}[label=\arabic*)]
        \item Suppose that $A_j = \mathbb{Z}_2$ for some $j\in [k]$. Then $\mathrm{I}_{|A_j|}=\mathbb{Z}_2$, and hence $$r_k(\mathrm{I}_{|A_1|},\dots, \mathrm{I}_{|A_k|}) = \prod_{\substack{i=1 \\ i\neq j}}^{k}|\mathrm{I}_{|A_i|}|=  \prod_{\substack{i=1 \\ i\neq j}}^{k}|A_i|=r_k(A_1,\dots,A_k).$$ Thus, equality holds in this case.
        
        \item Suppose that \(A_1,\dots,A_k\) are all singletons, say $A_i = \{a_i\}$ for $i\in [k]$. Then $\mathrm{I}_{|A_i|} = \{0\}$ for every $i\in 
        [k]$, and therefore
$r_k(\mathrm{I}_{|A_1|},\dots, \mathrm{I}_{|A_k|}) = 1.$
On the other hand,
\begin{equation*}
    r_k(A_1,\dots,A_k) =
    \begin{cases}
1, & \text{if } a_1+\dots+a_k = \mathbf{0},\\
0, & \text{if } a_1+\dots+a_k \neq \mathbf{0}.
\end{cases}
\end{equation*}
Hence, the desired inequality follows.  \qedhere  
    \end{enumerate}
\end{proof}

The following lemma enables us to reduce the problem of counting additive $k$-tuples in $\mathbb{Z}_2^n$ to the corresponding problem in $\mathbb{Z}_2$.
\begin{lemma}
\label{lem: reduction from Z_2^n to Z_2}
Let $\mathbf v\in\mathbb Z_2^n\setminus\{\mathbf0\}$ and let
$A_1,\ldots,A_k\subseteq\mathbb Z_2^n$. Then
\[
r_k(A_1,\ldots,A_k)
=
\sum_{\substack{\mathbf h_1,\ldots,\mathbf h_k\in L_{\mathbf v}\\
\mathbf h_1+\cdots+\mathbf h_k=\mathbf0}}
r_k\bigl(
(A_1)_{\mathbf h_1},\ldots,(A_k)_{\mathbf h_k}
\bigr).
\]
\end{lemma}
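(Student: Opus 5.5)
The plan is to use the coset decomposition \eqref{partition Z_2^n} to write every element uniquely in fibre coordinates. Each $\mathbf{x}\in\mathbb{Z}_2^n$ has a unique representation $\mathbf{x}=\mathbf{h}+\alpha\mathbf{v}$ with $\mathbf{h}\in L_{\mathbf{v}}$ and $\alpha\in\mathbb{Z}_2$. This holds because $L_{\mathbf{v}}$ is a subgroup of index $2$ and $\mathbf{v}\notin L_{\mathbf{v}}$, since $v_{j(\mathbf{v})}=1$. It follows that the map $L_{\mathbf{v}}\times\mathbb{Z}_2\to\mathbb{Z}_2^n$, $(\mathbf{h},\alpha)\mapsto\mathbf{h}+\alpha\mathbf{v}$, is a group isomorphism. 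Under this map, $A_i$ corresponds exactly to $\bigsqcup_{\mathbf{h}\in L_{\mathbf{v}}}\{\mathbf{h}\}\times (A_i)_{\mathbf{h}}$, by Definition~\ref{def:fibre}.

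Next I would rewrite $r_k(A_1,\dots,A_k)$ as the number of zero-sum $k$-tuples in $A_1\times\cdots\times A_k$; in $\mathbb{Z}_2^n$ this is equivalent to the defining equation. Writing $a_i=\mathbf{h}_i+\alpha_i\mathbf{v}$, the condition $a_1+\cdots+a_k=\mathbf{0}$ becomes $(\mathbf{h}_1+\cdots+\mathbf{h}_k)+(\alpha_1+\cdots+\alpha_k)\mathbf{v}=\mathbf{0}$. The first term lies in $L_{\mathbf{v}}$, and the second lies in $\{\mathbf{0},\mathbf{v}\}$. Uniqueness of the decomposition therefore shows that this condition is equivalent to the pair of conditions $\mathbf{h}_1+\cdots+\mathbf{h}_k=\mathbf{0}$ and $\alpha_1+\cdots+\alpha_k=0$ in $\mathbb{Z}_2$.

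Finally, I would group the tuples according to $(\mathbf{h}_1,\dots,\mathbf{h}_k)\in L_{\mathbf{v}}^k$. For a fixed choice with $\sum\mathbf{h}_i=\mathbf{0}$, the admissible tuples correspond bijectively to $(\alpha_1,\dots,\alpha_k)\in (A_1)_{\mathbf{h}_1}\times\cdots\times(A_k)_{\mathbf{h}_k}$ with $\sum\alpha_i=0$. In $\mathbb{Z}_2$, the count of such tuples is exactly $r_k\bigl((A_1)_{\mathbf{h}_1},\dots,(A_k)_{\mathbf{h}_k}\bigr)$, again because $\alpha_1+\cdots+\alpha_{k-1}=\alpha_k$ is equivalent to a zero sum. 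Choices with $\sum\mathbf{h}_i\neq\mathbf{0}$ contribute nothing. Summing over the fixed choices gives the stated identity.

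There is no serious obstacle here. The only point needing care is justifying the uniqueness of the decomposition, namely $L_{\mathbf{v}}\cap\{\mathbf{0},\mathbf{v}\}=\{\mathbf{0}\}$, which is what allows the zero-sum condition to split into the two conditions above.
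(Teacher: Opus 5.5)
Your proposal is correct and is essentially the paper's argument. You write each element as $\mathbf{h}+\alpha\mathbf{v}$, use $\mathbf{v}\notin L_{\mathbf{v}}$ to split the zero-sum condition into $\sum\mathbf{h}_i=\mathbf{0}$ and $\sum\alpha_i=0$, and identify each group with the additive $k$-tuples in the fibres; the paper does the grouping through additivity of $r_k$ over the pairs $\{\mathbf{h},\mathbf{h}+\mathbf{v}\}$ instead of the isomorphism $L_{\mathbf{v}}\times\mathbb{Z}_2\cong\mathbb{Z}_2^n$, which is only a cosmetic difference.
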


\begin{proof}[Proof of Lemma \ref{lem: reduction from Z_2^n to Z_2}]
For $\mathbf{h}\in L_{\mathbf{v}}$, set $P^{\mathbf{h}}\coloneqq \{\mathbf{h}, \mathbf{h}+\mathbf{v}\}$. For every $i\in [k]$, we have the disjoint decomposition $A_i = \sqcup_{\mathbf{h}_i\in L_\mathbf{v}}(A_i\cap P^{\mathbf{h}_i})$. Together with part 1 of Lemma \ref{lem: properties of r(A,B,C)}, this decomposition gives
\begin{equation*}
    r_k(A_1,\dots,A_k)= \sum\limits_{\mathbf{h}_1,\dots,\mathbf{h}_k\in L_\mathbf{v}} r_k\big(A_1\cap P^{\mathbf{h}_1},\dots,A_k\cap P^{\mathbf{h}_k}\big).
\end{equation*}
We claim that if $\mathbf{h}_1,\dots,\mathbf{h}_k\in L_{\mathbf{v}}$ satisfy $\mathbf{h}_1+\dots+\mathbf{h}_k\neq \mathbf{0}$, then $r_k(A_1\cap P^{\mathbf{h}_1},\dots,A_k\cap P^{\mathbf{h}_k}) = 0$. Suppose, for contradiction, that this quantity is positive. Then there exists $(\mathbf{x}_1,\dots,\mathbf{x}_k)\in \prod_{i=1}^k (A_i\cap P^{\mathbf{h}_i})$ such that $\mathbf{x}_1+\dots+\mathbf{x}_k=\mathbf{0}$. For every $i\in [k]$, we may write $\mathbf{x}_i = \mathbf{h}_i + \alpha_i \mathbf{v}$ for some $\alpha_i \in \mathbb{Z}_2$. It follows that $$\mathbf{h}_1+\dots+\mathbf{h}_k = (\alpha_1+\dots+\alpha_k)\mathbf{v}.$$ If $\alpha_1+\dots+\alpha_k=1$, then the preceding identity implies that $\mathbf{v}\in L_{\mathbf{v}}$, a contradiction. Therefore, $\alpha_1+\dots+\alpha_k=0$, and hence $\mathbf{h}_1 + \dots + \mathbf{h}_k=\mathbf{0}$, which is also a contradiction. Thus, 
\begin{equation*}
    r_k(A_1,\dots,A_k)= \sum_{\substack{\mathbf h_1,\ldots,\mathbf h_k\in L_{\mathbf v}\\
\mathbf h_1+\cdots+\mathbf h_k=\mathbf0}} r_k\big(A_1\cap P^{\mathbf{h}_1},\dots,A_k\cap P^{\mathbf{h}_k}\big).
\end{equation*}
Finally, suppose that $\mathbf{h_1},\dots,\mathbf{h}_k\in L_\mathbf{v}$ satisfy $\mathbf{h}_1+\cdots+\mathbf{h}_k=\mathbf{0}$. The correspondence
\[
(\alpha_1,\ldots,\alpha_k)
\longmapsto
(\mathbf{h}_1+\alpha_1\mathbf{v},\ldots,
\mathbf{h}_k+\alpha_k\mathbf{v})
\]
is a bijection between the additive $k$-tuples in
$\prod_{i=1}^k (A_i)_{\mathbf{h}_i}$ and those in
$\prod_{i=1}^k (A_i\cap P^{\mathbf{h}_i})$.
Therefore,
\[
r_k\big((A_1)_{\mathbf{h}_1},\ldots,(A_k)_{\mathbf{h}_k}\big)=r_k\big(A_1\cap P^{\mathbf{h}_1},\ldots,A_k\cap P^{\mathbf{h}_k}\big),
\]
which completes the proof.
\end{proof}

We now examine how compression affects the representation function. The next lemma shows that, for any set \(B\), the sum of the representation values over \(B\) does not exceed the corresponding sum after compressing all the sets in the same direction. In particular, compression cannot decrease the number of additive \(k\)-tuples.

\begin{lemma}
\label{lem:compression-majorization-monotonicity}
Let $\mathbf{v}\in\mathbb Z_2^n\setminus\{\mathbf{0}\}$ and let
$A_1,\ldots,A_k\subseteq\mathbb Z_2^n$. Then, for every $B\subseteq\mathbb Z_2^n$,
\begin{equation}
\label{compression majorization inequality}
    \sum_{\mathbf{x}\in B} r_{A_1+\cdots+A_k}(\mathbf{x})
    \leq
    \sum_{\mathbf{x}\in \mathrm{C}_{\mathbf v}(B)} r_{\mathrm{C}_{\mathbf v}(A_1)+\cdots+\mathrm{C}_{\mathbf v}(A_k)}(\mathbf{x}).
\end{equation} In particular,
\[
    r_k(A_1,\ldots,A_k)
    \leq
    r_k(\mathrm{C}_{\mathbf v}(A_1),\ldots,\mathrm{C}_{\mathbf v}(A_k)).
\]
\end{lemma}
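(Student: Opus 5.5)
The plan is to rewrite the left-hand side of \eqref{compression majorization inequality} as an additive-tuple count and then reduce fibrewise to $\mathbb{Z}_2$. Since $\mathbf{x}=-\mathbf{x}$ in $\mathbb{Z}_2^n$, we have
\[
\sum_{\mathbf{x}\in B} r_{A_1+\cdots+A_k}(\mathbf{x}) = r_{k+1}(A_1,\ldots,A_k,B),
\]
and similarly the right-hand side equals $r_{k+1}(\mathrm{C}_{\mathbf v}(A_1),\ldots,\mathrm{C}_{\mathbf v}(A_k),\mathrm{C}_{\mathbf v}(B))$. The claim therefore becomes a statement about $k+1$ sets that are all compressed in the same direction $\mathbf v$. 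The ``in particular'' part then follows by taking $k-1$ sets together with $B=A_k$, or directly by applying the same argument to $r_k$.

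Next I apply Lemma~\ref{lem: reduction from Z_2^n to Z_2} with $k+1$ sets:
\[
r_{k+1}(A_1,\ldots,A_k,B)=\sum_{\substack{\mathbf h_1,\ldots,\mathbf h_{k+1}\in L_{\mathbf v}\\ \mathbf h_1+\cdots+\mathbf h_{k+1}=\mathbf 0}} r_{k+1}\bigl((A_1)_{\mathbf h_1},\ldots,(A_k)_{\mathbf h_k},B_{\mathbf h_{k+1}}\bigr).
\]
For each fixed admissible tuple $(\mathbf h_1,\ldots,\mathbf h_{k+1})$, Lemma~\ref{lem: monotonicity of r_k for Z_2} (with $k+1$ in place of $k$) bounds the summand by
\[
r_{k+1}\bigl(\mathrm I_{|(A_1)_{\mathbf h_1}|},\ldots,\mathrm I_{|B_{\mathbf h_{k+1}}|}\bigr).
\]
By \eqref{h-fibre of C_v(X)}, this is exactly the summand for the compressed sets, namely $r_{k+1}\bigl((\mathrm C_{\mathbf v}(A_1))_{\mathbf h_1},\ldots,(\mathrm C_{\mathbf v}(B))_{\mathbf h_{k+1}}\bigr)$. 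Summing over all admissible tuples and applying Lemma~\ref{lem: reduction from Z_2^n to Z_2} once more, now to the compressed sets, gives the inequality.

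The only point needing care is the first identification. The representation function is indexed by sums equal to $\mathbf x$, while $r_{k+1}$ counts zero-sum tuples, so I must check that these agree in characteristic $2$; they do, since $a_1+\cdots+a_k=\mathbf x$ if and only if $a_1+\cdots+a_k+\mathbf x=\mathbf 0$. The same identity $r_{k+1}(A_1,\ldots,A_{k+1})=\sum_{\mathbf x\in A_{k+1}}r_{A_1+\cdots+A_k}(\mathbf x)$ is already recorded in Section~\ref{sec:prelim}. Beyond this, the argument is a direct assembly of the lemmas above, so I expect no real obstacle.
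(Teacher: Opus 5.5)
Your proposal is correct and follows the paper's proof essentially step for step: you rewrite both sides as $r_{k+1}$ counts, apply the fibre-reduction lemma to the $k+1$ sets, bound each fibre term using the $\mathbb{Z}_2$ monotonicity lemma, identify the bound with the compressed fibres via $(\mathrm{C}_{\mathbf v}(A))_{\mathbf h}=\mathrm{I}_{|A_{\mathbf h}|}$, and reassemble. The only cosmetic difference is in the final assertion. The paper takes $B=\{\mathbf 0\}$, using $\mathrm{C}_{\mathbf v}(\{\mathbf 0\})=\{\mathbf 0\}$, whereas you either rerun the argument for $r_k$ directly or use $B=A_k$ with $k-1$ summands; all of these work.
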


\begin{proof}[Proof of Lemma \ref{lem:compression-majorization-monotonicity}]
First, observe that 
\begin{equation*}
\sum_{\mathbf{x}\in B} r_{A_1+\cdots+A_k}(\mathbf{x})=r_{k+1}(A_1,\dots,A_k,B),
\end{equation*}
\begin{equation*}
\sum_{\mathbf{x}\in \mathrm{C}_{\mathbf v}(B)} r_{\mathrm{C}_{\mathbf v}(A_1)+\cdots+\mathrm{C}_{\mathbf v}(A_k)}(\mathbf{x})=r_{k+1}\big(
        \mathrm{C}_{\mathbf v}(A_1),\ldots,\mathrm{C}_{\mathbf v}(A_k),\mathrm{C}_{\mathbf v}(B)\big).
\end{equation*}
Applying Lemma \ref{lem: reduction from Z_2^n to Z_2} to the \(k+1\) sets
$A_1,\dots,A_k,B$, followed by Lemma \ref{lem: monotonicity of r_k for Z_2}, gives
$$ \begin{aligned} r_{k+1}(A_1,\dots,A_k,B) &= \sum_{\substack{\mathbf h_1,\ldots,\mathbf h_{k+1}\in L_{\mathbf v}\\
\mathbf h_1+\cdots+\mathbf h_{k+1}=\mathbf0}} r_{k+1}\left( (A_1)_{\mathbf h_1}, \dots, (A_k)_{\mathbf h_k}, B_{\mathbf h_{k+1}} \right)\\ &\leq \sum_{\substack{\mathbf h_1,\ldots,\mathbf h_{k+1}\in L_{\mathbf v}\\
\mathbf h_1+\cdots+\mathbf h_{k+1}=\mathbf0}} r_{k+1}\Big( \mathrm I_{|(A_1)_{\mathbf h_1}|}, \dots, \mathrm I_{|(A_k)_{\mathbf h_k}|}, \mathrm I_{|B_{\mathbf h_{k+1}}|} \Big). \end{aligned} $$
By \eqref{h-fibre of C_v(X)}, the last expression is equal to
$$ \sum_{\substack{\mathbf h_1,\ldots,\mathbf h_{k+1}\in L_{\mathbf v}\\
\mathbf h_1+\cdots+\mathbf h_{k+1}=\mathbf0}} r_{k+1}\left( \bigl(\mathrm C_{\mathbf v}(A_1)\bigr)_{\mathbf h_1}, \dots, \bigl(\mathrm C_{\mathbf v}(A_k)\bigr)_{\mathbf h_k}, \bigl(\mathrm C_{\mathbf v}(B)\bigr)_{\mathbf h_{k+1}} \right). $$
Another application of Lemma \ref{lem: reduction from Z_2^n to Z_2} shows that this sum is equal to
$$ r_{k+1}\big( \mathrm C_{\mathbf v}(A_1), \dots, \mathrm C_{\mathbf v}(A_k), \mathrm C_{\mathbf v}(B) \big).$$
The two identities at the beginning of the proof now establish \eqref{compression majorization inequality}. Finally,
taking $B=\{\mathbf{0}\}$ in \eqref{compression majorization inequality} and observing that
$\mathrm{C}_{\mathbf v}(B)=\{\mathbf{0}\}$ yields the final assertion.
\end{proof}

We now characterize the subsets of $\mathbb{Z}_2^n$ that are fixed by every compression.

\begin{lemma}
\label{lem:compression-sets-initial}
Let \(A\subseteq\mathbb Z_2^n\).  If \(\mathrm{C}_{\mathbf{v}}(A)=A\) for every ${\mathbf{v}}\in\mathbb Z_2^n \setminus \{\mathbf{0}\}$, then \(A=\mathrm{IS}_{|A|}\).
\end{lemma}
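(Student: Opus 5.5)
We argue by contradiction. Suppose $A$ is fixed by every compression but is not an initial segment. Then $A$ is not downward closed in the binary-value order, so there exist $\mathbf{x}\notin A$ and $\mathbf{y}\in A$ with $\operatorname{val}(\mathbf{x})<\operatorname{val}(\mathbf{y})$. Set $\mathbf{v}\coloneqq\mathbf{x}+\mathbf{y}\neq\mathbf{0}$; the aim is to show that $\mathrm{C}_{\mathbf{v}}$ moves $\mathbf{y}$ down to $\mathbf{x}$.

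The key step is to check that $\mathbf{x}\in L_{\mathbf{v}}$ and $\mathbf{y}=\mathbf{x}+\mathbf{v}$. Let $j=j(\mathbf{v})$ be the highest coordinate in which $\mathbf{x}$ and $\mathbf{y}$ differ. Since $\operatorname{val}(\mathbf{x})<\operatorname{val}(\mathbf{y})$ and the two strings agree above position $j$, comparing binary values forces $x_j=0$ and $y_j=1$. Hence $\mathbf{x}\in L_{\mathbf{v}}$, and the pair $\{\mathbf{h},\mathbf{h}+\mathbf{v}\}$ with $\mathbf{h}=\mathbf{x}$ is exactly $\{\mathbf{x},\mathbf{y}\}$, with $\mathbf{x}$ the lower element.

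Now look at the fibre: $A\cap\{\mathbf{x},\mathbf{y}\}=\{\mathbf{y}\}$. By the definition of compression (equivalently \eqref{h-fibre of C_v(X)}), $\mathrm{C}_{\mathbf{v}}(A)\cap\{\mathbf{x},\mathbf{y}\}=\{\mathbf{x}\}$. Therefore $\mathbf{x}\in\mathrm{C}_{\mathbf{v}}(A)\setminus A$, so $\mathrm{C}_{\mathbf{v}}(A)\neq A$, a contradiction. It follows that $A$ is downward closed in the order, so $A=\{\mathbf{x}_0,\dots,\mathbf{x}_{|A|-1}\}=\mathrm{IS}_{|A|}$.

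There is no real obstacle here. The only point needing care is the comparison of binary values at the leading differing bit, which is what guarantees that the lower element of the pair lies in $L_{\mathbf{v}}$.
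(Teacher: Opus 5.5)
Your proof is correct and takes the same approach as the paper's. The paper also picks a gap pair with values $\operatorname{val}(\text{missing}) < \operatorname{val}(\text{present})$, sets $\mathbf{v}$ equal to their sum, and reads off from the leading differing bit that the lower element lies in $L_{\mathbf{v}}$, so $\mathrm{C}_{\mathbf{v}}$ changes $A$; the only difference is that your letters $\mathbf{x},\mathbf{y}$ are swapped relative to the paper's.
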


\begin{proof}[Proof of Lemma \ref{lem:compression-sets-initial}]
Suppose that \(A\) is not an initial segment of $\mathbb{Z}_2^n$.  Then $A$ has a gap; that is, there exist 
$\mathbf{x}\in A$ and $\mathbf{y}\notin A$ such that $\operatorname{val}(\mathbf{y}) < \operatorname{val}(\mathbf{x}).$  Set $\mathbf{v}\coloneqq \mathbf{x}+\mathbf{y}.$ Since $\mathbf{x} \neq \mathbf{y}$, we have $\mathbf{v}\neq \mathbf{0}$. Let \(\ell\) be
the largest index at which \(\mathbf{x}\) and \(\mathbf{y}\) differ.  The inequality $\operatorname{val}(\mathbf{y}) < \operatorname{val}(\mathbf{x})$ implies that \(y_{\ell}=0\) and \(x_{\ell}=1\), while \(x_i=y_i\) for every \(i>\ell\).  Hence, \(\ell=j(\mathbf{v})\), so \(\mathbf{y}\in L_\mathbf{v}\) and \(\mathbf{x}=\mathbf{y}+\mathbf{v}\). Thus, the pair \(\{\mathbf{y},\mathbf{y}+\mathbf{v}\}\) is precisely \(\{\mathbf{y},\mathbf{x}\}\). Since $A$ contains the upper element $\mathbf{x}$ but not the lower element $\mathbf{y}$, the $\mathbf{v}$-compression changes $A$, contradicting the assumption. Therefore, $A$ has no gaps and hence $A=\mathrm{IS}_{|A|}$.
\end{proof}

We are now ready to prove Theorem \ref{thm:majorization}, our main rearrangement result. It states that the number of $k$-tuples from \(A_1\times\dots\times A_k\) whose sum lies in \(B\) is maximized when \(A_1,\dots,A_k\) and \(B\) are replaced by initial segments of the same cardinalities.

\iffalse
\begin{thm}\label{thm:majorization}
Let $0\leq a_1,\dots,a_k\leq q$, and let $A_1,\dots,A_k\subseteq \mathbb{Z}_2^n$ satisfy $|A_i|=a_i$ for every $i\in [k]$. Then, for every
$B\subseteq\mathbb Z_2^n$, writing $b\coloneqq |B|$, we have
\begin{equation}
\label{eq:setwise-rearrangement}
    \sum_{\mathbf{x}\in B}
    r_{A_1+\cdots+A_k}(\mathbf{x})
    \leq
    \sum_{\mathbf{x}\in\mathrm{IS}_{b}}
    r_{\mathrm{IS}_{a_1}+\cdots+\mathrm{IS}_{a_k}}(\mathbf{x}).
\end{equation}
Consequently, suppose that $\lambda_1\geq\cdots\geq\lambda_q$ and $\mu_1\geq\cdots\geq\mu_q$ are the nonincreasing rearrangements of the multisets $\{r_{A_1+\cdots+A_k}(\mathbf{x}):\mathbf{x}\in \mathbb{Z}_2^n\}$ and
$\{r_{\mathrm{IS}_{a_1}+\cdots+\mathrm{IS}_{a_k}}(\mathbf{x}): \mathbf{x} \in \mathbb{Z}_2^n\}$, respectively. Then $$(\lambda_1,\dots,\lambda_q) \preccurlyeq (\mu_1,\dots,\mu_q).$$
\end{thm}
\fi

\begin{proof}[Proof of Theorem \ref{thm:majorization}]
For every $(k+1)$-tuple $(X_1,\dots,X_{k+1})$ of subsets of $\mathbb{Z}_2^n$, define its \emph{weight} by
\begin{equation*}
    \mathrm{W}(X_1,\dots,X_{k+1}) \coloneqq \sum_{i=1}^{k+1} \sum_{\mathbf{x}\in X_i}\text{val}(\mathbf{x}). 
\end{equation*}
Clearly, $\mathrm{W}(X_1,\dots,X_{k+1})\in\mathbb{N}_0$. For every $(k+1)$-tuple $(X_1,\dots,X_{k+1})$, exactly one of the following two alternatives holds.
\begin{enumerate}[label=\arabic*)]
    \item For every $\mathbf{v}\in \mathbb{Z}_2^n \setminus \{\mathbf{0}\}$, $
        (\mathrm{C}_\mathbf{v}(X_1),\dots,\mathrm{C}_\mathbf{v}(X_{k+1})) = (X_1,\dots,X_{k+1}). $
    In this case, Lemma \ref{lem:compression-sets-initial} gives, $X_i = \mathrm{IS}_{|X_i|}$ for every $i\in [k+1]$.

    \item There exists $\mathbf{v}\in \mathbb{Z}_2^n \setminus \{\mathbf{0}\}$ such that $(\mathrm{C}_\mathbf{v}(X_1),\dots,\mathrm{C}_\mathbf{v}(X_{k+1})) \neq (X_1,\dots,X_{k+1}). $
    Thus, $\mathrm{C}_\mathbf{v}(X_j)\neq X_j$ for some $j\in [k+1]$. Since compression does not move any element upward, at least one element of \(X_j\) must move strictly downward. Therefore,
    \begin{equation*}
        \mathrm{W}(X_1,\dots,X_{k+1}) > \mathrm{W}(\mathrm{C}_\mathbf{v}(X_1),\dots,\mathrm{C}_\mathbf{v}(X_{k+1})). 
    \end{equation*}
\end{enumerate}
Furthermore, \eqref{compression majorization inequality} gives 
\begin{equation*}
        \sum_{\mathbf{x}\in X_{k+1}} r_{X_1+\dots+X_k}(\mathbf{x}) \leq \sum_{\mathbf{x}\in \mathrm{C}_{\mathbf{v}}(X_{k+1})} r_{\mathrm{C}_{\mathbf{v}}(X_1)+\dots+\mathrm{C}_{\mathbf{v}}(X_k)}(\mathbf{x}).
\end{equation*}

We begin with the $(k+1)$-tuple $(A_1,\dots,A_k,B)$. If the first alternative holds, then every set is already an initial segment, and \eqref{eq:setwise-rearrangement} holds with equality. If the second alternative holds, we replace $(A_1,\dots,A_k,B)$ by $(\mathrm{C}_\mathbf{v}(A_1),\dots,\mathrm{C}_\mathbf{v}(A_k),\mathrm{C}_\mathbf{v}(B))$. The preceding inequality shows that this replacement does not decrease the relevant sum, while the weight strictly decreases. We then apply the same procedure to the resulting $(k+1)$-tuple.

This procedure must terminate because the weight is a non-negative integer and decreases strictly at every nontrivial step. Thus, after finitely many steps, we obtain a \((k+1)\)-tuple $(A_1^\ast,\dots,A_k^\ast,B^\ast)$ that is fixed under every \(\mathbf v\)-compression. By Lemma \ref{lem:compression-sets-initial}, $A_i^\ast=\mathrm{IS}_{a_i}$ for every $i\in[k]$, and $ B^\ast=\mathrm{IS}_b$. It follows that
\begin{equation*}
    \sum_{\mathbf{x}\in B} r_{A_1+\dots+A_k}(\mathbf{x}) \leq \sum_{\mathbf{x}\in \mathrm{IS}_b} r_{\mathrm{IS}_{a_1}+\dots+\mathrm{IS}_{a_k}}(\mathbf{x}).
\end{equation*}

Now fix $\ell \in [q]$ and choose an $\ell$-element set $B_\ell$ on
which $r_{A_1+\cdots+A_k}$ takes its $\ell$ largest values. By \eqref{eq:setwise-rearrangement},
\[
    \sum_{i=1}^{\ell}\lambda_i
    =
    \sum_{\mathbf{x}\in B_\ell}r_{A_1+\cdots+A_k}(\mathbf{x})
    \leq
    \sum_{\mathbf{x}\in\mathrm{IS}_{\ell}}
    r_{\mathrm{IS}_{a_1}+\cdots+\mathrm{IS}_{a_k}}(\mathbf{x})
    \leq
    \sum_{i=1}^{\ell}\mu_i.
\]
For $\ell=q$, both sides are equal to $\prod_{i=1}^k a_i$, which proves the majorization statement. 
\end{proof}

\section{Applications of the majorization theorem}\label{sec:app}
In this section, we present several applications and consequences of the majorization theorem.

\subsection{Linear equations and extremal additive tuples}
We begin with an application to linear equations over $\mathbb Z_2^n$ which is a precise analogue of the first result presented in the abstract of \cite{Lev01}. In this setting, invertible linear transformations play the same role as nonzero scalar coefficients in Lev's theorem. 

Let $0\leq a_1,\dots,a_k\leq q$, and let $A_1,\dots,A_k\subseteq \mathbb{Z}_2^n$ satisfy $|A_i|=a_i$ for every $i\in [k]$. Let $$\mathrm{T}_1,\ldots,\mathrm{T}_k\in\mathrm{GL}_n(\mathbb Z_2).$$ For each $\mathbf{x}\in \mathbb{Z}_2^n$, define $$N(\mathbf{x})\coloneqq |\{(\mathbf{x}_1,\dots,\mathbf{x}_k)\in A_1\times \dots\times A_k: \mathrm{T}_1\mathbf{x}_1+\cdots+\mathrm{T}_k\mathbf{x}_k = \mathbf{x}\}|.$$

Theorem~\ref{thm:majorization} yields the following rearrangement inequality.

\begin{thm}
\label{thm:linear-equations}
With the notation above, for every $B\subseteq \mathbb{Z}_2^n$, writing $b\coloneqq |B|$, we have
\begin{equation}
\label{majorization for N(x)}
    \sum_{\mathbf{x}\in B}N(\mathbf{x}) \leq \sum_{\mathbf{x}\in \mathrm{IS}_b}r_{\mathrm{IS}_{a_1}+\dots+\mathrm{IS}_{a_k}}(\mathbf{x}).
\end{equation}
In particular, for every $\mathbf{x}\in \mathbb{Z}_2^n$, $$N(\mathbf{x})\leq r_k(\mathrm{IS}_{a_1},\dots,\mathrm{IS}_{a_k}).$$
\end{thm}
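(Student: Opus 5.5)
The plan is to reduce directly to Theorem~\ref{thm:majorization} by absorbing the linear maps into the sets. Set $A_i'\coloneqq \mathrm{T}_i(A_i)$ for each $i\in[k]$. Since each $\mathrm{T}_i$ is invertible, $\mathrm{T}_i$ restricts to a bijection $A_i\to A_i'$, so $|A_i'|=a_i$. The map
\[
(\mathbf{x}_1,\dots,\mathbf{x}_k)\longmapsto(\mathrm{T}_1\mathbf{x}_1,\dots,\mathrm{T}_k\mathbf{x}_k)
\]
is then a bijection from $\{(\mathbf{x}_1,\dots,\mathbf{x}_k)\in A_1\times\dots\times A_k:\mathrm{T}_1\mathbf{x}_1+\dots+\mathrm{T}_k\mathbf{x}_k=\mathbf{x}\}$ onto $\{(\mathbf{y}_1,\dots,\mathbf{y}_k)\in A_1'\times\dots\times A_k':\mathbf{y}_1+\dots+\mathbf{y}_k=\mathbf{x}\}$. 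Hence $N(\mathbf{x})=r_{A_1'+\cdots+A_k'}(\mathbf{x})$ for every $\mathbf{x}$.

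Summing over $\mathbf{x}\in B$ and applying \eqref{eq:setwise-rearrangement} to the sets $A_1',\dots,A_k'$, which have sizes $a_1,\dots,a_k$, gives \eqref{majorization for N(x)} immediately.

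For the pointwise statement, take $B=\{\mathbf{x}\}$, so $b=1$ and $\mathrm{IS}_1=\{\mathbf{0}\}$. The right-hand side of \eqref{majorization for N(x)} becomes $r_{\mathrm{IS}_{a_1}+\dots+\mathrm{IS}_{a_k}}(\mathbf{0})$. In $\mathbb{Z}_2^n$, a $k$-tuple sums to $\mathbf{0}$ exactly when it is an additive $k$-tuple, as noted after Definition~\ref{def: additive k-tuples}, so this quantity equals $r_k(\mathrm{IS}_{a_1},\dots,\mathrm{IS}_{a_k})$.

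There is no substantive obstacle. The only points needing care are that invertibility of each $\mathrm{T}_i$ is what preserves the cardinalities $|A_i'|=a_i$, and that the identification $r_{\cdots}(\mathbf{0})=r_k(\cdots)$ uses characteristic $2$.
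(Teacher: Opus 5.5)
Your proposal is correct and follows essentially the same route as the paper. You set $A_i'=\mathrm{T}_i(A_i)$, identify $N$ with $r_{A_1'+\cdots+A_k'}$ via the coordinatewise bijection, apply Theorem~\ref{thm:majorization}, and then specialize to $B=\{\mathbf{x}\}$, using that a zero-sum $k$-tuple is an additive $k$-tuple in $\mathbb{Z}_2^n$.
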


\begin{proof}[Proof of Theorem \ref{thm:linear-equations}]
For every $i\in [k]$, set $B_i\coloneqq \mathrm{T}_i(A_i)$. Since $\mathrm{T}_i\in \mathrm{GL}_n(\mathbb{Z}_2)$, the map $\mathrm{T}_i$ is a bijection. Therefore, $|B_i|=|A_i|=a_i$. Moreover, the correspondence $$(\mathbf{x}_1,\dots,\mathbf{x}_k)\longmapsto (\mathrm{T}_1\mathbf{x}_1,\dots,\mathrm{T}_k\mathbf{x}_k)$$ is a bijection from $\prod_{i=1}^k A_i$ onto $\prod_{i=1}^k B_i$. Hence, for every $\mathbf{x}\in \mathbb{Z}_2^n$, $N(\mathbf{x}) = r_{B_1+\dots+B_k}(\mathbf{x}).$
Applying Theorem \ref{thm:majorization} to the sets $B_1,\dots,B_k$ gives
\begin{equation*}
    \sum_{\mathbf{x}\in B}N(\mathbf{x}) = \sum_{\mathbf{x}\in B}r_{B_1+\dots+B_k}(\mathbf{x})\leq \sum_{\mathbf{x}\in \mathrm{IS}_b}r_{\mathrm{IS}_{|B_1|}+\dots+\mathrm{IS}_{|B_k|}}(\mathbf{x}) = \sum_{\mathbf{x}\in \mathrm{IS}_b}r_{\mathrm{IS}_{a_1}+\dots+\mathrm{IS}_{a_k}}(\mathbf{x}),
\end{equation*}
which proves \eqref{majorization for N(x)}. Finally, take $B=\{\mathbf{x}\}$ in \eqref{majorization for N(x)}. Since $b=1$ and $\mathrm{IS}_1 = \{\mathbf{0}\}$, we obtain
$$N(\mathbf{x})\leq r_{\mathrm{IS}_{a_1}+\dots+\mathrm{IS}_{a_k}}(\mathbf{0})=r_k(\mathrm{IS}_{a_1},\dots,\mathrm{IS}_{a_k}),$$
which proves the final assertion.
\end{proof}

Note that Theorem~\ref{thm:higher-order-rearrangement} is a special case of Theorem~\ref{thm:linear-equations}, obtained by taking $\mathrm{T}_i=\operatorname{id}$ for every $i\in [k]$. Indeed, in this case, $N= r_{A_1+\dots+A_k}$. In particular, $N(\mathbf{0}) = r_k(A_1,\dots,A_k)$.

Next, we determine the minimum possible value of \(r_k(A_1,\dots,A_k)\) when the sets \(A_1,\dots,A_k\subseteq \mathbb Z_2^n\) vary independently subject to prescribed cardinalities.

\begin{cor}
\label{cor:mixed-k-minimum}
Let $0\leq a_1,\dots,a_k\leq q$. Then
\begin{equation*}
\min_{\substack{A_i\subseteq \mathbb{Z}_2^n\\ |A_i|=a_i}}
        r_k(A_1,\ldots,A_k)=r_k(\mathrm{IS}_{a_1},\dots,\mathrm{IS}_{a_{k-1}},\mathrm{FS}_{a_k}).
\end{equation*}
\end{cor}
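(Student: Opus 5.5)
We obtain the minimum by complementation from the maximization result. By part 2 of Lemma \ref{lem: properties of r(A,B,C)} applied with $i=k$,
\[
r_k(A_1,\dots,A_k)=\prod_{j=1}^{k-1}a_j - r_k(A_1,\dots,A_{k-1},A_k^c).
\]
So minimizing $r_k(A_1,\dots,A_k)$ over sets of the prescribed sizes is the same as maximizing $r_k(A_1,\dots,A_{k-1},A_k^c)$. Here $A_k^c$ ranges over all sets of size $q-a_k$ as $A_k$ ranges over all sets of size $a_k$.

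By Theorem \ref{thm:higher-order-rearrangement},
\[
r_k(A_1,\dots,A_{k-1},A_k^c)\le r_k(\mathrm{IS}_{a_1},\dots,\mathrm{IS}_{a_{k-1}},\mathrm{IS}_{q-a_k}).
\]
This gives the lower bound
\[
r_k(A_1,\dots,A_k)\ge \prod_{j=1}^{k-1}a_j - r_k(\mathrm{IS}_{a_1},\dots,\mathrm{IS}_{a_{k-1}},\mathrm{IS}_{q-a_k}).
\]

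We check that this bound is attained. Take $A_i=\mathrm{IS}_{a_i}$ for $i<k$ and $A_k=\mathrm{IS}_{q-a_k}^c$. Since $\mathrm{IS}_{q-a_k}^c=\mathrm{FS}_{a_k}$, the same identity from Lemma \ref{lem: properties of r(A,B,C)} shows that
\[
r_k(\mathrm{IS}_{a_1},\dots,\mathrm{IS}_{a_{k-1}},\mathrm{FS}_{a_k})
\]
equals the lower bound exactly. This value is therefore the minimum, as claimed.

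The edge cases need no separate treatment. If some $a_i=0$, both sides are $0$. If $a_k=0$ or $a_k=q$, the complement identity still holds with $\mathrm{IS}_q=\mathbb{Z}_2^n$ and $\mathrm{FS}_0=\varnothing$.

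The argument is essentially routine. The only point needing care is that complementation is a bijection between $a_k$-subsets and $(q-a_k)$-subsets, so the maximum over $A_k^c$ is genuinely the maximum over all $(q-a_k)$-subsets. One might also expect to need an independent minimization argument, but complementing the last coordinate alone suffices, with the other sets left as initial segments.
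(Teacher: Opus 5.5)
Your proposal is correct and follows essentially the same route as the paper. The paper also complements the last coordinate using part 2 of Lemma~\ref{lem: properties of r(A,B,C)} and bounds $r_k(A_1,\ldots,A_{k-1},A_k^c)$ by Theorem~\ref{thm:higher-order-rearrangement}. It then notes that equality is attained at $(\mathrm{IS}_{a_1},\ldots,\mathrm{IS}_{a_{k-1}},\mathrm{FS}_{a_k})$ because $\mathrm{IS}_{q-a_k}^c=\mathrm{FS}_{a_k}$.
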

\begin{proof}[Proof of Corollary \ref{cor:mixed-k-minimum}]
By part 2 of Lemma~\ref{lem: properties of r(A,B,C)} and
Theorem~\ref{thm:higher-order-rearrangement}, we have
\begin{align*}
r_k(A_1,\ldots,A_k)
&=\prod_{i=1}^{k-1}a_i-r_k(A_1,\ldots,A_{k-1},A_k^c)\\
&\geq \prod_{i=1}^{k-1}a_i-
r_k(\mathrm{IS}_{a_1},\ldots,\mathrm{IS}_{a_{k-1}},
\mathrm{IS}_{q-a_k})=r_k(\mathrm{IS}_{a_1},\ldots,\mathrm{IS}_{a_{k-1}},
\mathrm{FS}_{a_k}).
\end{align*}
Equality is attained by the tuple $(\mathrm{IS}_{a_1},\ldots,\mathrm{IS}_{a_{k-1}},\mathrm{FS}_{a_k})$.
\end{proof}

\subsection{Convexity and Pollard-type inequalities in \texorpdfstring{\ensuremath{\mathbb{Z}_2^n}}{Z2n}}
In this subsection, we derive a general convexity inequality from Karamata's inequality~\cite{Kar32} and use it to obtain Pollard-type and sumset inequalities. 

Together, the majorization theorem (Theorem~\ref{thm:majorization}) and Karamata's inequality (Proposition~\ref{prop:karam}) immediately yield the following \emph{convexity inequality}.

\begin{thm}
\label{thm: main convexity inequality}
Let $0\leq a_1,\dots,a_k\leq q$, and let $A_1,\dots,A_k\subseteq \mathbb{Z}_2^n$ satisfy $|A_i|=a_i$ for every $i\in [k]$. If $\Phi:[0,\infty)\to \mathbb{R}$ is convex, then 
\begin{equation}
\label{main convexity inequality}
\sum_{\mathbf{x}\in \mathbb{Z}_2^n} \Phi(r_{A_1+\dots+A_k}(\mathbf{x})) \leq \sum_{\mathbf{x}\in \mathbb{Z}_2^n} \Phi(r_{\mathrm{IS}_{a_1}+\dots+\mathrm{IS}_{a_k}}(\mathbf{x})).
\end{equation}
\end{thm}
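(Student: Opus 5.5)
The plan is to apply Karamata's inequality (Proposition~\ref{prop:karam}) directly to the majorization furnished by Theorem~\ref{thm:majorization}.

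First, both sides of \eqref{main convexity inequality} are sums over all of $\mathbb{Z}_2^n$. Such a sum is invariant under permuting the summands. So we may rewrite the left-hand side as $\sum_{i=1}^q \Phi(\lambda_i)$ and the right-hand side as $\sum_{i=1}^q\Phi(\mu_i)$. Here $\lambda_1\geq\cdots\geq\lambda_q$ and $\mu_1\geq\cdots\geq\mu_q$ are the nonincreasing rearrangements of the multisets $\{r_{A_1+\cdots+A_k}(\mathbf{x})\}$ and $\{r_{\mathrm{IS}_{a_1}+\cdots+\mathrm{IS}_{a_k}}(\mathbf{x})\}$, exactly as in Theorem~\ref{thm:majorization}.

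Second, the second part of Theorem~\ref{thm:majorization} gives $(\lambda_1,\dots,\lambda_q)\preccurlyeq(\mu_1,\dots,\mu_q)$ in the sense of Definition~\ref{def:majorization}. That is, both vectors are sorted nonincreasingly, every partial sum of the $\lambda_i$ is at most the corresponding partial sum of the $\mu_i$, and the total sums coincide (both equal $\prod_i a_i$).

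Third, we take the interval $I=[0,\infty)$. All $\lambda_i,\mu_i$ are non-negative integers, so both vectors lie in $I^q$, and $\Phi$ is convex on $I$. Proposition~\ref{prop:karam} then yields $\sum_i\Phi(\lambda_i)\leq\sum_i\Phi(\mu_i)$, which is \eqref{main convexity inequality}.

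There is no real obstacle here. The only points needing care are that the rearrangement step is legitimate and that the entries lie in the domain of $\Phi$; both are immediate. The substantive work is already contained in Theorem~\ref{thm:majorization}.
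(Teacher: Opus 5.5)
Your proposal is correct and is essentially the paper's own proof: rewrite both sides using the nonincreasing rearrangements, invoke the majorization $(\lambda_1,\dots,\lambda_q)\preccurlyeq(\mu_1,\dots,\mu_q)$ from Theorem~\ref{thm:majorization}, and apply Karamata's inequality on $I=[0,\infty)$. Your explicit checks, that the sums are permutation-invariant and that all entries lie in the domain of $\Phi$, are details the paper leaves implicit.
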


\begin{proof}[Proof of Theorem \ref{thm: main convexity inequality}]
By Theorem~\ref{thm:majorization}, the nonincreasing rearrangement of the values of $r_{A_1+\cdots+A_k}$ is majorized by the nonincreasing rearrangement of the values of $r_{\mathrm{IS}{a_1}+\cdots+\mathrm{IS}{a_k}}$. The result therefore follows immediately from Karamata's inequality.
\end{proof}

We next use Theorem~\ref{thm: main convexity inequality} to derive a Pollard-type inequality in $\mathbb{Z}_2^n$. For $\tau\in \mathbb{N}$ and $X_1,\dots,X_k\subseteq\mathbb{Z}_2^n$, define the set of \emph{$\tau$-popular sums} by
\begin{equation*}
S_{\tau}(X_1,\dots,X_k)
\coloneqq
\left\{\mathbf{x}\in\mathbb{Z}_2^n:
r_{X_1+\dots+X_k}(\mathbf{x})\geq\tau\right\}.
\end{equation*}

\begin{thm}
\label{thm: Pollard for Z2n}
Let $0\leq a_1,\dots,a_k\leq q$, and let $A_1,\dots,A_k\subseteq \mathbb{Z}_2^n$ satisfy $|A_i|=a_i$ for every $i\in [k]$. Then, for every $N\in \mathbb{N}$,
\begin{equation*}
    \sum_{\tau=1}^N |S_{\tau}(A_1,\dots,A_k)| \geq \sum_{\tau=1}^N |S_{\tau}(\mathrm{IS}_{a_1},\dots,\mathrm{IS}_{a_k})|.
\end{equation*}
\end{thm}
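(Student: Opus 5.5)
The plan is to apply Theorem~\ref{thm: main convexity inequality} with the convex function $\Phi(t)\coloneqq \min\{t,N\}$ negated, or equivalently with $\Phi_N(t)\coloneqq \max\{t-N,0\}$. The key identity is that for a nonnegative integer $r$,
\begin{equation*}
\sum_{\tau=1}^N \mathds{1}[r\geq \tau] = \min\{r,N\} = r - \max\{r-N,0\}.
\end{equation*}
Summing over $\mathbf{x}\in\mathbb{Z}_2^n$ with $r=r_{A_1+\dots+A_k}(\mathbf{x})$ gives
\begin{equation*}
\sum_{\tau=1}^N |S_\tau(A_1,\dots,A_k)| = \sum_{\mathbf{x}\in\mathbb{Z}_2^n} r_{A_1+\dots+A_k}(\mathbf{x}) - \sum_{\mathbf{x}\in\mathbb{Z}_2^n}\Phi_N\bigl(r_{A_1+\dots+A_k}(\mathbf{x})\bigr),
\end{equation*}
and the same identity holds with each $A_i$ replaced by $\mathrm{IS}_{a_i}$.

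The first sum on the right equals $\prod_{i=1}^k a_i$ in both cases, since every $k$-tuple in the product set has exactly one sum. Hence the two left-hand sides differ only through the $\Phi_N$ terms. The function $\Phi_N$ is convex on $[0,\infty)$, being the maximum of two affine functions. Theorem~\ref{thm: main convexity inequality} therefore gives
\begin{equation*}
\sum_{\mathbf{x}}\Phi_N\bigl(r_{A_1+\dots+A_k}(\mathbf{x})\bigr)\leq \sum_{\mathbf{x}}\Phi_N\bigl(r_{\mathrm{IS}_{a_1}+\dots+\mathrm{IS}_{a_k}}(\mathbf{x})\bigr).
\end{equation*}
Subtracting from the common total $\prod_i a_i$ reverses this inequality and yields exactly the claimed bound.

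There is no real obstacle here. The only points to check are the counting identity $\sum_{\tau=1}^N\mathds{1}[r\geq\tau]=\min\{r,N\}$, which holds for integer $r\geq 0$, and the equality of the total mass $\sum_{\mathbf{x}} r(\mathbf{x})=\prod_i a_i$, which is also implicit in the $\ell=q$ case of Theorem~\ref{thm:majorization}. Alternatively, one could argue directly from the majorization $(\lambda_i)\preccurlyeq(\mu_i)$ without passing through Karamata's inequality, but the convexity route is the shortest.
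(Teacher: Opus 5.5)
Your proof is correct and is essentially the paper's argument. The paper rewrites $\sum_{\tau=1}^N|S_\tau|$ as $\sum_{\mathbf{x}}\min\{r(\mathbf{x}),N\}$ and applies Theorem~\ref{thm: main convexity inequality} directly to the convex function $x\mapsto-\min\{x,N\}$. Your function $\max\{x-N,0\}$ differs from that one only by the linear term $x$, whose sum $\prod_i a_i$ is the same on both sides, so your detour through the total mass is harmless but not needed.
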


\begin{proof}[Proof of Theorem \ref{thm: Pollard for Z2n}]
By the definition of $S_{\tau}$ and changing the order of summation, we obtain
\begin{equation*}
\sum_{\tau=1}^N |S_{\tau}(A_1,\dots,A_k)| = \sum_{\mathbf{x}\in \mathbb{Z}_2^n} \min\{r_{A_1+\dots+A_k}(\mathbf{x}), N\}.
\end{equation*}
Similarly,
\begin{equation*}
\sum_{\tau=1}^N |S_{\tau}(\mathrm{IS}_{a_1},\dots,\mathrm{IS}_{a_k})| = \sum_{\mathbf{x}\in \mathbb{Z}_2^n} \min\{r_{\mathrm{IS}_{a_1}+\dots+\mathrm{IS}_{a_k}}(\mathbf{x}), N\}.
\end{equation*}
For every $c>0$, define $\Phi_c:[0,\infty)\to \mathbb{R}$ by $\Phi_c(x)\coloneqq -\min\{x,c\}$. Since $\Phi_c$ is convex, the convexity inequality gives
\begin{equation}
\sum_{\mathbf{x}\in \mathbb{Z}_2^n} \min\{r_{A_1+\dots+A_k}(\mathbf{x}), c\} \geq \sum_{\mathbf{x}\in \mathbb{Z}_2^n} \min\{r_{\mathrm{IS}_{a_1}+\dots+\mathrm{IS}_{a_k}}(\mathbf{x}),c\}
\end{equation}
for every $c>0$. Taking $c=N$ yields the desired result.
\end{proof}

Taking $N=1$ in Theorem~\ref{thm: Pollard for Z2n} immediately yields Corollary~\ref{cor:extension of Bollobas-Leader}, which extends the Bollob\'as--Leader sumset inequality to an arbitrary number of summands in $\mathbb{Z}_2^n$.

\begin{remark}
We next present a further application of the convexity inequality, this time in a graph-theoretic setting. Fix $m\geq2$, and let $\Phi_m:[0,\infty)\to[0,\infty)$ be the
piecewise-linear interpolation of $j\mapsto\binom{j}{m}$ on
$\mathbb N_0$. Since
$\binom{j+1}{m}-\binom{j}{m}=\binom{j}{m-1}$ is nondecreasing in $j$, the function $\Phi_m$ is convex. Hence
Theorem~\ref{thm: main convexity inequality} gives
\[
    \sum_{\mathbf{x}\in\mathbb Z_2^n}
    \binom{r_{A_1+\cdots+A_k}(\mathbf{x})}{m}
    \leq
    \sum_{\mathbf{x}\in\mathbb Z_2^n}
    \binom{r_{\mathrm{IS}_{a_1}+\cdots+\mathrm{IS}_{a_k}}(\mathbf{x})}{m}.
\]

This inequality has a simple graph-theoretic interpretation. Let
$G(A_1,\ldots,A_k)$ be the graph with vertex set
$A_1\times\cdots\times A_k$, in which two distinct vertices are adjacent if and only if the sums of their coordinates are equal. Then
\[
    G(A_1,\ldots,A_k)
    \cong
    \bigsqcup_{\mathbf{x}\in\mathbb Z_2^n}
    K_{r_{A_1+\cdots+A_k}(\mathbf{x})}.
\]
Thus, the left-hand side of the inequality above counts the number of $m$-cliques in
$G(A_1,\ldots,A_k)$. Consequently, among all choices of sets with prescribed
cardinalities, this number is maximized when
$A_i=\mathrm{IS}_{a_i}$ for every $i\in[k]$.
\end{remark}

\subsection{Monotonicity of representation functions for initial and final segments}
The second assertion of Lev’s Theorem 1 describes the ordering of the representation numbers for the extremizing balanced sets. We now establish the corresponding monotonicity property for initial segments of \(\mathbb Z_2^n\).

Recall that $\mathbb{Z}_2^n = \{\mathbf{x}_0,\dots, \mathbf{x}_{q-1}\}$, where $\operatorname{val}(\mathbf{x}_i) = i$ for every $0\leq i \leq q-1$.

\begin{prop}
\label{prop: monotonicity of repr on IS}
Let $0\leq a_1,\dots,a_k\leq q$, and set $r\coloneqq r_{\mathrm{IS}_{a_1}+\dots+\mathrm{IS}_{a_k}}$. Then 
\begin{equation*}
    r(\mathbf{x}_0) \geq r(\mathbf{x}_1) \geq \dots \geq r(\mathbf{x}_{q-1}).
\end{equation*}
\end{prop}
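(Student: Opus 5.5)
Since $r(\mathbf{x}_i)=\sum_{\mathbf{x}\in\{\mathbf{x}_i\}} r(\mathbf{x})$ counts $k$-tuples of initial segments summing into a singleton, the plan is to compare consecutive values by a single compression step. Fix $0\le i<q-1$ and set $\mathbf{y}\coloneqq\mathbf{x}_i$, $\mathbf{x}\coloneqq\mathbf{x}_{i+1}$, so $\operatorname{val}(\mathbf{y})<\operatorname{val}(\mathbf{x})$. As in the proof of Lemma~\ref{lem:compression-sets-initial}, let $\mathbf{v}\coloneqq\mathbf{x}+\mathbf{y}\neq\mathbf{0}$. Then $\mathbf{y}\in L_{\mathbf{v}}$ and $\mathbf{x}=\mathbf{y}+\mathbf{v}$, so $\mathrm{C}_{\mathbf{v}}(\{\mathbf{x}\})=\{\mathbf{y}\}$.

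I will apply Lemma~\ref{lem:compression-majorization-monotonicity} with $A_j=\mathrm{IS}_{a_j}$ and $B=\{\mathbf{x}\}$. This gives
\[
r(\mathbf{x}) \le \sum_{\mathbf{z}\in\{\mathbf{y}\}} r_{\mathrm{C}_{\mathbf{v}}(\mathrm{IS}_{a_1})+\cdots+\mathrm{C}_{\mathbf{v}}(\mathrm{IS}_{a_k})}(\mathbf{z}).
\]
It therefore suffices to check that every initial segment is fixed by every compression, i.e.\ $\mathrm{C}_{\mathbf{v}}(\mathrm{IS}_a)=\mathrm{IS}_a$. This is the converse of Lemma~\ref{lem:compression-sets-initial}, and it is the one point needing verification. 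For each pair $\{\mathbf{h},\mathbf{h}+\mathbf{v}\}$ with $\mathbf{h}\in L_{\mathbf{v}}$ we have $\operatorname{val}(\mathbf{h})<\operatorname{val}(\mathbf{h}+\mathbf{v})$. So if $\mathbf{h}+\mathbf{v}\in\mathrm{IS}_a$ then $\mathbf{h}\in\mathrm{IS}_a$, because initial segments are downward closed in the value order. Hence $\mathrm{IS}_a\cap\{\mathbf{h},\mathbf{h}+\mathbf{v}\}$ is always $\varnothing$, $\{\mathbf{h}\}$, or the whole pair. Each of these is already an initial segment of the pair, so compression leaves $\mathrm{IS}_a$ unchanged.

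Consequently the right-hand side above equals $r(\mathbf{y})$, giving $r(\mathbf{x}_{i+1})\le r(\mathbf{x}_i)$ for every $i$. Chaining these inequalities yields the full monotone chain. The main (mild) obstacle is only the fixed-point check just described. Everything else is a direct specialization of Lemma~\ref{lem:compression-majorization-monotonicity} to a singleton $B$, with the compression direction chosen so that it moves $\mathbf{x}_{i+1}$ onto $\mathbf{x}_i$.
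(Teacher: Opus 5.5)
Your proof is correct, but it takes a more local route than the paper.

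The paper applies the global rearrangement inequality (Theorem~\ref{thm:higher-order-rearrangement}, with $k+1$ sets) to the test set $C_t=\mathrm{IS}_t\sqcup\{\mathbf{x}_{t+1}\}$, compares it with $\mathrm{IS}_{t+1}$, and cancels the common contribution of $\mathrm{IS}_t$. Monotonicity thus appears as a formal corollary of the full theorem, which in turn rests on the iterated compression and the weight-decreasing termination argument.

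You instead use only the one-step inequality of Lemma~\ref{lem:compression-majorization-monotonicity}, with the singleton $B=\{\mathbf{x}_{i+1}\}$ and the direction $\mathbf{v}=\mathbf{x}_i+\mathbf{x}_{i+1}$. You combine it with the fact that initial segments are fixed by every compression. This is the converse of Lemma~\ref{lem:compression-sets-initial}, which the paper never needs explicitly, and you verify it correctly via downward closure in the value order.

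What your route buys is that it bypasses the termination argument entirely. It also proves something more general: if $A_1,\dots,A_k$ are all fixed by $\mathrm{C}_{\mathbf{v}}$, then $r_{A_1+\cdots+A_k}(\mathbf{h}+\mathbf{v})\le r_{A_1+\cdots+A_k}(\mathbf{h})$ for every $\mathbf{h}\in L_{\mathbf{v}}$. What the paper's route buys is brevity once the main theorem is available. It also shows that the ordering statement is a formal consequence of the setwise inequality, in the same spirit as the deduction of the second part of Lev's Theorem~1 from the first.
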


\begin{proof}[Proof of Proposition \ref{prop: monotonicity of repr on IS}] 
Fix $0\leq t \leq q-2$, and set $C_t\coloneqq \mathrm{IS}_t \sqcup \{\mathbf{x}_{t+1}\}$. Then $|C_t|=t+1$. By Theorem~\ref{thm:higher-order-rearrangement},
\begin{equation*}
    r_{k+1}(\mathrm{IS}_{a_1},\dots, \mathrm{IS}_{a_k},C_t) \leq r_{k+1}(\mathrm{IS}_{a_1},\dots, \mathrm{IS}_{a_k},\mathrm{IS}_{t+1}).
\end{equation*}
Equivalently, \begin{equation*}
    \sum_{\mathbf{x}\in C_t} r(\mathbf{x}) \leq \sum_{\mathbf{x}\in \mathrm{IS}_{t+1}} r(\mathbf{x}).
\end{equation*}
Since $\mathrm{IS}_{t+1} = \mathrm{IS}_t \sqcup \{\mathbf{x}_t\}$, cancelling the common contribution from \(\mathrm{IS}_t\) gives
\begin{equation*}
    r(\mathbf{x}_{t+1}) \leq r(\mathbf{x}_{t}). 
\end{equation*}
Since this holds for every $t\in\{0,\dots,q-2\}$, the desired monotonicity follows.
\end{proof}

We next consider the more general situation in which each set is either an
initial segment or a final segment of $\mathbb{Z}_2^n$.

\begin{prop}
\label{prop: either IS or FS}
Let $0\leq a_1,\dots,a_k \leq q$, and let $B_i \in \{\mathrm{IS}_{a_i}, \mathrm{FS}_{a_i}\}$ for every $i\in [k]$. Let $I\coloneqq \{i\in [k]: B_i = \mathrm{FS}_{a_i}\}$, and let $h\coloneqq |I|$ denote the number of final segments among $B_1,\dots,B_k$. Set $r \coloneqq r_{B_1+\dots+B_k}$. Then the following hold:
\begin{enumerate}[label=\arabic*)]
    \item If $h$ is even, then $$r(\mathbf{x}_0)\geq r(\mathbf{x}_1) \geq \dots \geq r(\mathbf{x}_{q-1}).$$ 
    \item If $h$ is odd, then $$r(\mathbf{x}_0)\leq r(\mathbf{x}_1) \leq \dots \leq r(\mathbf{x}_{q-1}).$$ 
\end{enumerate}
\end{prop}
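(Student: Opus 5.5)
The plan is to reduce to Proposition~\ref{prop: monotonicity of repr on IS} using the identity $\mathrm{FS}_{a}=\mathrm{IS}_{a}+\mathbf{1}$. For each $i\in I$ we write $B_i=\mathrm{IS}_{a_i}+\mathbf{1}$, and for $i\notin I$ we have $B_i=\mathrm{IS}_{a_i}$. Every $k$-tuple $(b_1,\dots,b_k)\in B_1\times\cdots\times B_k$ then corresponds bijectively to a tuple $(c_1,\dots,c_k)\in \mathrm{IS}_{a_1}\times\cdots\times\mathrm{IS}_{a_k}$ via $b_i=c_i+\mathbf{1}$ for $i\in I$ and $b_i=c_i$ otherwise. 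Under this correspondence $b_1+\cdots+b_k=c_1+\cdots+c_k+h\mathbf{1}$. Writing $r_0\coloneqq r_{\mathrm{IS}_{a_1}+\cdots+\mathrm{IS}_{a_k}}$, this gives
\[
r(\mathbf{x})=r_0(\mathbf{x}+h\mathbf{1})\qquad\text{for every }\mathbf{x}\in\mathbb{Z}_2^n .
\]

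If $h$ is even, then $h\mathbf{1}=\mathbf{0}$, so $r=r_0$. Part 1 then follows directly from Proposition~\ref{prop: monotonicity of repr on IS}.

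If $h$ is odd, then $r(\mathbf{x})=r_0(\mathbf{x}+\mathbf{1})$. Adding $\mathbf{1}$ flips every bit, so $\operatorname{val}(\mathbf{x}_j+\mathbf{1})=q-1-j$, that is, $\mathbf{x}_j+\mathbf{1}=\mathbf{x}_{q-1-j}$. Hence $r(\mathbf{x}_j)=r_0(\mathbf{x}_{q-1-j})$. Proposition~\ref{prop: monotonicity of repr on IS} says $r_0(\mathbf{x}_{q-1-j})$ is nonincreasing in $q-1-j$, so it is nondecreasing in $j$, which proves part 2.

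There is no real obstacle here. The only points to check are the bit-complement identity $\mathbf{x}_j+\mathbf{1}=\mathbf{x}_{q-1-j}$ and the edge case where some $a_i=0$; in that case $r\equiv 0$ and both chains of inequalities hold trivially.
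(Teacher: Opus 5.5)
Your proposal is correct and follows the paper's proof essentially step for step. Like the paper, you use $\mathrm{FS}_a=\mathrm{IS}_a+\mathbf{1}$ to get $r(\mathbf{x})=r_{\mathrm{IS}_{a_1}+\cdots+\mathrm{IS}_{a_k}}(\mathbf{x}+h\mathbf{1})$, split by the parity of $h$, and apply Proposition~\ref{prop: monotonicity of repr on IS} together with $\mathbf{x}_j+\mathbf{1}=\mathbf{x}_{q-1-j}$. Your separate treatment of the case $a_i=0$ is harmless but not needed, since the bijection argument already covers empty sets.
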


\begin{proof}[Proof of Proposition \ref{prop: either IS or FS}] Recall that $\mathrm{FS}_a = \mathrm{IS}_a + \mathbf{1}$. Thus, for each $i\in[k]$, every $b_i\in B_i$ can be written uniquely in the
form
\[
b_i=
\begin{cases}
y_i, & i\notin I,\\
\mathbf{1}+y_i, & i\in I,
\end{cases}
\qquad
y_i\in\mathrm{IS}_{a_i}.
\]
Hence the correspondence $(b_1,\dots,b_k) \longmapsto (y_1,\dots,y_k)$ is a bijection between the representations of $\mathbf{x}$ by  $B_1,\dots,B_k$ and the representations of $\mathbf{x}+h\mathbf{1}$ by $\mathrm{IS}_{a_1},\dots,\mathrm{IS}_{a_k}$. Therefore,
\begin{equation*}
    r(\mathbf{x}) = r_{\mathrm{IS}_{a_1}+\dots+\mathrm{IS}_{a_k}}(\mathbf{x} + h\mathbf{1}).
\end{equation*}
We now distinguish two cases:

1) If $h$ is even, then $h\mathbf{1} = \mathbf{0}$, and hence $r(\mathbf{x}) = r_{\mathrm{IS}_{a_1}+\dots+\mathrm{IS}_{a_k}}(\mathbf{x})$. The first conclusion therefore follows from
Proposition~\ref{prop: monotonicity of repr on IS}.

2) If $h$ is odd, then $h\mathbf{1} = \mathbf{1}$, and hence $r(\mathbf{x}) = r_{\mathrm{IS}_{a_1}+\dots+\mathrm{IS}_{a_k}}(\mathbf{x}+\mathbf{1})$. Since $\mathbf{x}_i + \mathbf{1} = \mathbf{x}_{q-1-i}$ for every $i\in \{0,\dots,q-1\}$, Proposition \ref{prop: monotonicity of repr on IS} implies $r(\mathbf{x}_0)\leq  r(\mathbf{x}_1)\leq\dots\leq r(\mathbf{x}_{q-1})$.
\end{proof}

In particular, when all the sets are final segments, we have $h=k$. Thus, the representation function $r_{\mathrm{FS}_{a_1}+\dots+\mathrm{FS}_{a_k}}$ is decreasing if $k$ is even and increasing if $k$ is odd.

% The following corollary follows immediately from the preceding proposition and concerns the case in which all the sets are final segments.

% \begin{cor}
% Let $0\leq a_1,\dots,a_k \leq q$, and set $r \coloneqq r_{\mathrm{FS}_{a_1}+\dots+\mathrm{FS}_{a_k}}$. Then the following hold:
% \begin{enumerate}[label=\arabic*)]
%     \item If $k$ is even, then $$r(\mathbf{x}_0)\geq r(\mathbf{x}_1) \geq \dots \geq r(\mathbf{x}_{q-1}).$$ 
%     \item If $k$ is odd, then $$r(\mathbf{x}_0)\leq r(\mathbf{x}_1) \leq \dots \leq r(\mathbf{x}_{q-1}).$$ 
% \end{enumerate}
% \end{cor}

\subsection{A functional rearrangement inequality} We now extend our rearrangement inequality \eqref{eq:setwise-rearrangement} for subsets of $\mathbb{Z}_2^n$ to arbitrary nonnegative functions on \(\mathbb Z_2^n\).

\begin{definition}
\label{def: decreasing rearrangement}
    Let $f:\mathbb{Z}_2^n \to [0,\infty)$, and let $a_0\geq a_1 \geq \dots \geq a_{q-1}$ be the values of $f$, counted with multiplicity and listed in nonincreasing order. The nonincreasing rearrangement of $f$ is the function $\widebar{f}:\mathbb{Z}_2^n \to [0,\infty)$ defined by $\widebar{f}(\mathbf{x}_i)\coloneqq a_i$ for every $i\in \{0,1,\dots,q-1\}$.
\end{definition}

By Definition~\ref{def: decreasing rearrangement}, the nonincreasing rearrangement of $\mathbf{1}_{A}$ is $\mathbf{1}_{\mathrm{IS}_{|A|}}$, where $A\subseteq \mathbb{Z}_2^n$. Thus, the following theorem generalizes \eqref{eq:setwise-rearrangement} (by taking $f_i=\mathbf{1}_{A_i}$ for $i\in[k]$ and $f_{k+1}=\mathbf{1}_B$) and provides an analogue of Lev's Theorem 2 in $\mathbb{Z}_2^n$.
\begin{thm}
\label{thm: Lev's thm 2 in Z2n}
    Let $k\geq 2$, and let $f_1,\dots,f_k:\mathbb{Z}_2^n\to [0,\infty)$. Then
    \begin{equation}
    \label{inequality for convolutions}
        \sum_{\substack{\mathbf{y}_1,\dots,\mathbf{y}_k\in \mathbb{Z}_2^n \\ \mathbf{y}_1+\dots+\mathbf{y}_k=\mathbf{0}}} f_1(\mathbf{y}_1)\dots f_k(\mathbf{y}_k)\leq \sum_{\substack{\mathbf{y}_1,\dots,\mathbf{y}_k\in \mathbb{Z}_2^n \\ \mathbf{y}_1+\dots+\mathbf{y}_k=\mathbf{0}}} \widebar{f_1}(\mathbf{y}_1)\dots \widebar{f_k}(\mathbf{y}_k).
    \end{equation}
\end{thm}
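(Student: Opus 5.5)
The plan is to use the layer-cake decomposition to reduce \eqref{inequality for convolutions} to Theorem~\ref{thm:higher-order-rearrangement}. For $f:\mathbb{Z}_2^n\to[0,\infty)$ and $t>0$, let $E_t(f)\coloneqq\{\mathbf{x}:f(\mathbf{x})\geq t\}$. Then
\[
f(\mathbf{x})=\int_0^\infty \mathds{1}_{E_t(f)}(\mathbf{x})\,dt .
\]
The key observation is that for every $t>0$ we have $E_t(\widebar f)=\mathrm{IS}_{|E_t(f)|}$. Indeed, $\widebar f$ takes its values in nonincreasing order along $\mathbf{x}_0,\mathbf{x}_1,\dots$, so its superlevel sets are initial segments. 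Their sizes agree with those of $f$, because the multisets of values of $f$ and $\widebar f$ coincide.

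Next we expand the left-hand side by multilinearity and Tonelli (all integrands are nonnegative):
\[
\sum_{\mathbf{y}_1+\dots+\mathbf{y}_k=\mathbf{0}}\prod_{i=1}^k f_i(\mathbf{y}_i)=\int_0^\infty\!\!\cdots\!\int_0^\infty r_k\bigl(E_{t_1}(f_1),\dots,E_{t_k}(f_k)\bigr)\,dt_1\cdots dt_k .
\]
This holds because $\sum_{\mathbf{y}_1+\dots+\mathbf{y}_k=\mathbf{0}}\prod_i\mathds{1}_{A_i}(\mathbf{y}_i)=r_k(A_1,\dots,A_k)$ in $\mathbb{Z}_2^n$, as noted after Definition~\ref{def: additive k-tuples}. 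The same identity holds for the $\widebar{f_i}$, with integrand $r_k\bigl(\mathrm{IS}_{|E_{t_1}(f_1)|},\dots,\mathrm{IS}_{|E_{t_k}(f_k)|}\bigr)$. Theorem~\ref{thm:higher-order-rearrangement} gives the pointwise inequality between the two integrands for every $(t_1,\dots,t_k)$, and integrating yields \eqref{inequality for convolutions}.

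If one prefers to avoid integrals, the same argument works with finite sums. Write each $f$ as $\sum_j (c_j-c_{j+1})\mathds{1}_{E_{c_j}(f)}$ over its distinct values $c_1>c_2>\dots$, with $c_{m+1}\coloneqq 0$; this is a nonnegative combination of indicators of superlevel sets. The identical decomposition for $\widebar f$ has the same coefficients, with initial segments in place of the sets.

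The only point needing care is the identification $E_t(\widebar f)=\mathrm{IS}_{|E_t(f)|}$, including ties among values. This follows directly from Definition~\ref{def: decreasing rearrangement}: $|\{i:a_i\geq t\}|$ equals $|E_t(f)|$, and these indices form an initial block. Nonnegativity of the $f_i$ is exactly what makes the layer-cake representation valid, so the hypothesis is used there. Beyond this, I expect no real obstacle; the main theorem does all the work.
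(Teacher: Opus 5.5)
Your proposal is correct and follows the paper's route. The paper orders $\mathbb{Z}_2^n$ so that each $f_i$ is nonincreasing, with ties broken arbitrarily. It writes $f_i=\sum_j (a_{i,j}-a_{i,j+1})\mathbf{1}_{\mathrm{Y}_{i,j}}$ and $\widebar{f_i}=\sum_j (a_{i,j}-a_{i,j+1})\mathbf{1}_{\mathrm{IS}_j}$, then expands multilinearly and applies Theorem~\ref{thm:higher-order-rearrangement} termwise using nonnegative coefficients — exactly your finite-sum superlevel-set version, of which your integral form is the continuous analogue.
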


\begin{proof}[Proof of Theorem \ref{thm: Lev's thm 2 in Z2n}]
For each $i\in[k]$, choose an ordering $\mathbf{y}_{i,1},\ldots,\mathbf{y}_{i,q}$ of the elements of $\mathbb{Z}_2^n$ such that $f_i(\mathbf{y}_{i,1})\geq \cdots \geq f_i(\mathbf{y}_{i,q}).$
Set $a_{i,j}:=f_i(\mathbf{y}_{i,j})$ for every $j\in [q]$ and $a_{i,q+1}:=0$.  For each $j\in [q]$, let
$\mathrm{Y}_{i,j}:=\{\mathbf{y}_{i,1},\ldots,\mathbf{y}_{i,j}\}$. Thus, $\mathrm{Y}_{i,j}$ is a $j$-element set on which $f_i$ takes its $j$ largest values. We have
\[
f_i=\sum_{j=1}^q (a_{i,j}-a_{i,j+1})\mathbf 1_{\mathrm{Y}_{i,j}}
\qquad \text{and} \qquad
\widebar f_i=\sum_{j=1}^q (a_{i,j}-a_{i,j+1})
\mathbf 1_{\operatorname{IS}_j}.
\]
Indeed, if $\mathbf{x}\in \mathbb{Z}_2^n$, then $\mathbf{x}=\mathbf{y}_{i,\ell}$ for some $\ell\in [q]$. Moreover, $\mathbf{y}_{i,\ell} \in \mathrm{Y}_{i,j}$ $\Leftrightarrow$ $\ell\leq j\leq q$. Therefore, 
\begin{equation*}
\sum_{j=1}^q (a_{i,j}-a_{i,j+1})\mathbf 1_{\mathrm{Y}_{i,j}}(\mathbf{x})=\sum_{j=\ell}^q (a_{i,j}-a_{i,j+1})=a_{i,\ell}=f_i(\mathbf{x}).    
\end{equation*}
The second identity follows in the same way from the definition of the nonincreasing rearrangement. Using these decompositions and applying Theorem~\ref{thm:higher-order-rearrangement} to each $(j_1,\ldots,j_k)\in [q]^k$, we obtain
\begin{align*}
\sum_{\substack{\mathbf{y}_1,\ldots,\mathbf{y}_k\in\mathbb Z_2^n\\
\mathbf{y}_1+\cdots+\mathbf{y}_k=0}}
f_1(\mathbf{y}_1)\cdots f_k(\mathbf{y}_k)
&=
\sum_{j_1,\ldots,j_k\in [q]}
\left(\prod_{i=1}^k(a_{i,j_i}-a_{i,j_i+1})\right)
r_k(\mathrm{Y}_{1,j_1},\ldots,\mathrm{Y}_{k,j_k})\\
&\leq
\sum_{j_1,\ldots,j_k\in [q]}
\left(\prod_{i=1}^k(a_{i,j_i}-a_{i,j_i+1})\right)
r_k(\operatorname{IS}_{j_1},\ldots,\operatorname{IS}_{j_k})\\
&=
\sum_{\substack{\mathbf{y}_1,\ldots,\mathbf{y}_k\in\mathbb Z_2^n\\
\mathbf{y}_1+\cdots+\mathbf{y}_k=0}}
\widebar f_1(\mathbf{y}_1)\cdots \widebar f_k(\mathbf{y}_k)
\end{align*}
as required, 
where the inequality follows because $a_{i,j}-a_{i,j+1}\geq 0$ for all $(i,j)\in [k]\times [q]$.
\end{proof}

\begin{remark}
The analogue of Lev’s Theorem 3 in the setting of $\mathbb{Z}_2^n$ follows immediately from Theorem \ref{thm: Lev's thm 2 in Z2n}. Indeed, for functions
$f_1,\ldots,f_k:\mathbb Z_2^n \to [0,\infty)$, Theorem \ref{thm: Lev's thm 2 in Z2n}, applied to the $2k$ functions $f_1,\ldots,f_k,f_1,\ldots,f_k,$ 
gives
\[
\sum_{\substack{\mathbf{x}_1+\cdots+\mathbf{x}_k=\mathbf{y}_1+\cdots+\mathbf{y}_k}}
    \prod_{i=1}^k f_i(\mathbf{x}_i)f_i(\mathbf{y}_i)
\leq
\sum_{\substack{\mathbf{x}_1+\cdots+\mathbf{x}_k=\mathbf{y}_1+\cdots+\mathbf{y}_k}}
    \prod_{i=1}^k \widebar f_i(\mathbf{x}_i)\widebar f_i(\mathbf{y}_i),
\]
since in $\mathbb Z_2^n$ the relation
$\mathbf{x}_1+\cdots+\mathbf{x}_k=\mathbf{y}_1+\cdots+\mathbf{y}_k$ is equivalent to
$\mathbf{x}_1+\cdots+\mathbf{x}_k+\mathbf{y}_1+\cdots+\mathbf{y}_k=\mathbf{0}$.
Thus, no separate argument corresponding to Lev's Theorem~3 is required.
\end{remark}

\section{Extremal values and the structure of extremizers for \texorpdfstring{$r_k(A)$}{rk(A)} when \texorpdfstring{$k$}{k} is odd}
\label{sec:extremal}

In this section, we consider the diagonal case in which all \(k\) sets are identical. For odd \(k\), we determine the exact maximum and minimum values of \(r_k(A)\) and characterize all sets attaining these extremal values. Theorem \ref{thm:higher-order-rearrangement} identifies an initial segment as a maximizer, while the following \emph{complement identity} allows us to determine the minimum.

\begin{lemma}
\label{lem:general-complement-expansion}
If \(A\subseteq \mathbb{Z}_2^n\), then
\begin{equation*}
    r_k(A)=\frac{|A|^k-(-1)^k |A^c|^k}{q}+
        (-1)^k r_k(A^c).    
\end{equation*}
\end{lemma}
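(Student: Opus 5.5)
The approach is to write $\mathds{1}_A = \mathds{1}_{\mathbb{Z}_2^n} - \mathds{1}_{A^c}$ and expand the multilinear count. Since $r_k(A)$ counts $k$-tuples in $A^k$ summing to $\mathbf{0}$, we have
\[
r_k(A)=\sum_{\substack{\mathbf{y}_1,\dots,\mathbf{y}_k\in\mathbb{Z}_2^n\\ \mathbf{y}_1+\cdots+\mathbf{y}_k=\mathbf{0}}}\prod_{i=1}^k\bigl(1-\mathds{1}_{A^c}(\mathbf{y}_i)\bigr).
\]
Expanding the product over subsets $J\subseteq[k]$ (the indices at which we pick $-\mathds{1}_{A^c}$) gives
\[
r_k(A)=\sum_{J\subseteq[k]}(-1)^{|J|}\, r_k(X^J_1,\dots,X^J_k),
\]
where $X^J_i=A^c$ for $i\in J$ and $X^J_i=\mathbb{Z}_2^n$ otherwise. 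This is the same bookkeeping as iterating part 2 of Lemma~\ref{lem: properties of r(A,B,C)}.

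Next I evaluate each term. If $J=[k]$, the term is $(-1)^k r_k(A^c)$. If $J\neq[k]$, some coordinate ranges over all of $\mathbb{Z}_2^n$, so for any choice of the other $k-1$ coordinates exactly one value completes a zero-sum tuple. Hence the term equals $|A^c|^{|J|}q^{k-1-|J|}$. Summing these with signs gives
\[
\sum_{j=0}^{k-1}\binom{k}{j}(-1)^j|A^c|^j q^{k-1-j}=\frac{1}{q}\Bigl((q-|A^c|)^k-(-1)^k|A^c|^k\Bigr)=\frac{|A|^k-(-1)^k|A^c|^k}{q},
\]
by the binomial theorem with the $j=k$ term removed, using $q-|A^c|=|A|$. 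Adding the $J=[k]$ term yields the claimed identity.

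The only point needing care is the count for $J\neq[k]$, including the degenerate cases $A^c=\varnothing$ and $|J|=0$. In the latter the term is $q^{k-1}$, and the free-coordinate argument covers it uniformly. There is no real obstacle; this is a direct inclusion–exclusion computation.
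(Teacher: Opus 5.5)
Your proof is correct, but it organizes the computation differently from the paper. You expand $\mathds{1}_A=1-\mathds{1}_{A^c}$ in all $k$ coordinates at once. This gives $2^k$ terms, each argument being $\mathbb{Z}_2^n$ or $A^c$, and you sum the main terms with the binomial theorem.

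The paper instead passes from $A$ to $A^c$ one coordinate at a time. Let $f(a,b)$ denote $r_k$ evaluated at $a$ copies of $A^c$ followed by $b$ copies of $A$. Part 2 of Lemma~\ref{lem: properties of r(A,B,C)} gives $f(i-1,k-i+1)+f(i,k-i)=|A^c|^{i-1}|A|^{k-i}$ for $i\in[k]$. The alternating sum of these $k$ relations telescopes to $r_k(A)+(-1)^{k-1}r_k(A^c)$, and the right-hand side is a geometric sum equal to $(|A|^k-(-1)^k|A^c|^k)/q$. The paper's iteration of part 2 evaluates the full-group term immediately at each step. That is why it yields a chain of $k+1$ mixed $A$/$A^c$ terms rather than your $2^k$-term expansion, so your remark that the two are ``the same bookkeeping'' is only loosely true.

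Both arguments rest on the same fact: a coordinate ranging over all of $\mathbb{Z}_2^n$ contributes exactly the product of the other sizes. The telescoping route is slightly more economical, needing only $k$ uses of that fact. Your inclusion--exclusion makes the main term transparent as the contribution of tuples with at least one unrestricted coordinate. It also carries over verbatim to distinct sets, giving in closed form
$r_k(A_1,\dots,A_k)=\frac{1}{q}\bigl(\prod_i|A_i|-(-1)^k\prod_i|A_i^c|\bigr)+(-1)^k r_k(A_1^c,\dots,A_k^c)$.
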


\begin{proof}[Proof of Lemma \ref{lem:general-complement-expansion}]
For brevity, define $$f(a,b)\coloneqq r_k(\underbrace{A^c,\dots,A^c}_{a\text{ terms}},\underbrace{A,\dots,A}_{b\text{ terms}}),$$ where $a+b=k$.  By part 2 of Lemma \ref{lem: properties of r(A,B,C)}, for every $i\in [k]$, we have
$$f(i-1,k-i+1) + f(i,k-i)=|A^c|^{i-1}|A|^{k-i}.$$
Multiplying this identity by $(-1)^{i-1}$ and summing over $i\in [k]$, we obtain
\begin{align*}
    \sum_{i=1}^k (-1)^{i-1}f(i-1,k-i+1) + \sum_{i=1}^k (-1)^{i-1}f(i,k-i) = \sum_{i=1}^k (-1)^{i-1}|A^c|^{i-1}|A|^{k-i}.
\end{align*}
The expression on the left-hand side telescopes to $f(0,k) + (-1)^{k-1}f(k,0)$. Meanwhile, the expression on the right-hand side is a finite geometric sum equal to $\frac{|A|^k-(-1)^k|A^c|^k}{q}$. Since $f(0,k)=r_k(A)$ and $f(k,0)=r_k(A^c)$, the desired identity follows.
\end{proof}

We next compute \(r_k\) for an initial segment. Together with Theorem \ref{thm:higher-order-rearrangement}, this computation gives the maximum possible value of \(r_k(A)\) among sets of a prescribed size.

\begin{prop}
\label{cor:odd-initial-segment-value}
Let \(k\geq 3\) be odd, and let \(0< a\leq q\). Set $Q\coloneqq 2^{\lceil\log_2 a\rceil}$.
Then
\[
\max_{\substack{A\subseteq\mathbb{Z}_2^n\\ |A|=a}}r_k(A)
    =
    r_k(\mathrm{IS}_a)
    =
    \frac{a^k+(Q-a)^k}{Q}.
\]
\end{prop}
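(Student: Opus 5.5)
The first equality is exactly Theorem~\ref{thm:higher-order-rearrangement} with $A_1=\dots=A_k=A$. The proof therefore reduces to computing $r_k(\mathrm{IS}_a)$, and I would do this by working inside the subgroup $K\coloneqq \mathrm{IS}_Q$.

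Since $Q$ is a power of two with $Q\le q$, the set $K$ consists of all strings whose top $n-\log_2 Q$ coordinates vanish, so $K\leq\mathbb{Z}_2^n$ and $K\cong\mathbb{Z}_2^{m}$ with $m=\log_2 Q$. Every additive $k$-tuple in $\mathrm{IS}_a\subseteq K$ lies in $K$. Hence $r_k(\mathrm{IS}_a)$ can be computed entirely in $K$, where $\mathrm{IS}_a$ is again an initial segment, with ambient size $Q$ in place of $q$.

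Next I would apply Lemma~\ref{lem:general-complement-expansion} inside $K$ (its proof only uses part 2 of Lemma~\ref{lem: properties of r(A,B,C)} and the group order, so it holds in $\mathbb{Z}_2^m$ with $q$ replaced by $Q$). Write $S\coloneqq K\setminus\mathrm{IS}_a$, which has $|S|=Q-a$. Since $k$ is odd, the lemma gives
\[
r_k(\mathrm{IS}_a)=\frac{a^k+(Q-a)^k}{Q}-r_k(S).
\]
It therefore suffices to show $r_k(S)=0$.

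This is the key step. If $a=Q$, then $S=\varnothing$ and there is nothing to prove. Otherwise $Q/2<a<Q$ by the definition of $Q$ (for $a=1$ we have $Q=1$). Then $S$ is the final segment of $K$ of size $Q-a<Q/2$, so every element of $S$ has top bit (coordinate $m-1$) equal to $1$. A sum of an odd number $k$ of elements of $S$ therefore has top bit $1$, so it is nonzero, and $S$ contains no zero-sum $k$-tuple. Hence $r_k(S)=0$, and the formula follows.

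The only delicate points are the edge cases $a=Q$ and $a=1$, together with justifying that the complement identity may be used in the subgroup $K$; I expect both to be routine.
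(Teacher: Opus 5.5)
Your proposal is correct and follows essentially the same route as the paper's proof. You get the upper bound from Theorem \ref{thm:higher-order-rearrangement}, apply the complement identity inside $K=\mathrm{IS}_Q$, and note that $r_k(K\setminus\mathrm{IS}_a)=0$ because that set lies in the upper half of $K$ and $k$ is odd; the only cosmetic difference is that you absorb $a=1$ into the case $a=Q$ (where $S=\varnothing$), whereas the paper handles $a=1$ separately.
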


\begin{proof}[Proof of Proposition \ref{cor:odd-initial-segment-value}]
By Theorem~\ref{thm:higher-order-rearrangement}, $r_k(A)\leq r_k(\mathrm{IS}_a)$
for every \(A\subseteq\mathbb{Z}_2^n\) with \(|A|=a\). It therefore
remains to compute \(r_k(\mathrm{IS}_a)\).
If $a=1$, then $\mathrm{IS}_a=\{\mathbf{0}\}$, and the formula is immediate. Assume, therefore, that $a\geq 2$. Let $K\coloneqq \mathrm{IS}_Q$ and
$S\coloneqq K\setminus \mathrm{IS}_a$. Since $Q/2<a\leq Q$, the set
$S$ is contained in the upper half of $K$, which is a nontrivial
coset of an index-two subgroup of $K$. Since $k$ is odd, the sum of
$k$ elements of this coset cannot equal $\mathbf{0}$. Therefore, $r_k(S)=0$. Applying the complement identity to
$\mathrm{IS}_a$, regarded as a subset of the subgroup $K$, gives the desired formula.
\end{proof}

Having determined the maximum, we now turn to the minimum. Here, the assumption that \(k\) is odd is essential: the complement identity transforms the minimization problem for \(A\) into the maximization problem for \(A^c\).

%The case \(k=3\) of the following proposition was first established by Samotij and Sudakov \(\cite{SS16}\), using a result from spectral graph theory due to Alon and Chung \(\cite{alon}\). Our method is more direct and applies to every odd \(k\geq3\).

\begin{prop}
\label{prop:diagonal-minimum}
Let $k\geq 3$ be odd, and let $0\leq a<q$. Set $Q\coloneqq 2^{\lceil\log_2(q-a)\rceil}$. Then
\[
    \min_{\substack{A\subseteq\mathbb Z_2^n\\ |A|=a}} r_k(A)
    =
    r_k(\mathrm{FS}_a)
    =
    \frac{a^k+(q-a)^k}{q}
    -
    \frac{(q-a)^k+(Q-(q-a))^k}{Q}.
\]
\end{prop}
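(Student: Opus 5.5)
We plan to reduce the minimization problem for \(A\) to the maximization problem for \(A^c\) via the complement identity (Lemma~\ref{lem:general-complement-expansion}). Since \(k\) is odd, \((-1)^k=-1\), so for every \(A\subseteq\mathbb Z_2^n\) with \(|A|=a\),
\[
r_k(A)=\frac{a^k+(q-a)^k}{q}-r_k(A^c).
\]
The first term depends only on \(a\). Hence minimizing \(r_k(A)\) over \(|A|=a\) is equivalent to maximizing \(r_k(A^c)\) over sets \(A^c\) of size \(q-a\). As \(A\) ranges over all \(a\)-subsets, \(A^c\) ranges over all \((q-a)\)-subsets, so no information is lost.

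Since \(a<q\), we have \(q-a\geq 1\), and Proposition~\ref{cor:odd-initial-segment-value} applies with \(q-a\) in place of \(a\). It gives
\[
\max_{|C|=q-a} r_k(C)=r_k(\mathrm{IS}_{q-a})=\frac{(q-a)^k+(Q-(q-a))^k}{Q},
\]
where \(Q=2^{\lceil\log_2(q-a)\rceil}\), exactly as in the statement. Substituting this into the displayed identity yields the claimed lower bound for \(r_k(A)\).

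For attainment, we take \(A=\mathrm{FS}_a\). Then \(A^c=\mathrm{IS}_{q-a}\), which is a maximizer of \(r_k\) among sets of size \(q-a\), so equality holds and the minimum equals \(r_k(\mathrm{FS}_a)\). The edge case \(a=0\) is consistent: both sides equal \(0\), because \(Q=q\) and the two fractions cancel. The only genuine point to check is that the complement identity is applied in the whole group \(\mathbb Z_2^n\), with \(q\) in the denominator, rather than in a subgroup. The argument needs no further work beyond this check.
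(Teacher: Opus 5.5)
Your proposal is correct and follows the paper's proof essentially verbatim. For odd $k$, the complement identity turns minimizing $r_k(A)$ into maximizing $r_k(A^c)$, which is attained at $A^c=\mathrm{IS}_{q-a}$, i.e. $A=\mathrm{FS}_a$, and Proposition~\ref{cor:odd-initial-segment-value} with $q-a$ in place of $a$ supplies the value.
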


\begin{proof}[Proof of Proposition \ref{prop:diagonal-minimum}]
Since $k$ is odd, the complement identity gives
\[
    r_k(A)
    =
    \frac{a^k+(q-a)^k}{q}
    -
    r_k(A^c).
\]
Thus, minimizing $r_k(A)$ is equivalent to maximizing $r_k(A^c)$. By
Theorem~\ref{thm:higher-order-rearrangement},
$r_k(A^c)\leq r_k(\mathrm{IS}_{q-a}),$
with equality when $A=\mathrm{FS}_a$. Hence,
\[
    \min_{\substack{A\subseteq\mathbb Z_2^n\\ |A|=a}} r_k(A)
    =
    \frac{a^k+(q-a)^k}{q}
    -
    r_k(\mathrm{IS}_{q-a}).
\]
Applying Proposition~\ref{cor:odd-initial-segment-value} with $a$ replaced by $q-a$, we obtain
\[
    r_k(\mathrm{IS}_{q-a})
    =
    \frac{(q-a)^k+(Q-(q-a))^k}{Q},
\]
which proves the result.
\end{proof}

\begin{remark}\label{rem:evenk}
The diagonal problem for even \(k\) is substantially different from the odd case. Indeed, when \(k\) is even, the complement identity gives
\begin{equation*}
    r_k(A)=\frac{|A|^k-|A^c|^k}{q}+
        r_k(A^c).    
\end{equation*}
Thus, minimizing \(r_k(A)\) is equivalent to minimizing \(r_k(A^c)\), rather than maximizing it. The compression method in Theorem~\ref{thm:higher-order-rearrangement} determines the maximum possible value of \(r_k(A^c)\) and therefore does not resolve the diagonal minimization problem when \(k\) is even.

Already for $k=4$, the quantity
\begin{equation*}
        r_4(A)
        =
        |\{(a_1,a_2,a_3,a_4)\in A^4:
        a_1+a_2+a_3+a_4=\mathbf{0}\}|
\end{equation*}
coincides with the \emph{additive energy} of $A$. A simple counting argument gives
\begin{align*}
r_4(A)\ge |A|+\binom{4}{2}\binom{|A|}{2}=3|A|^2-2|A|,  
\end{align*}
where the right-hand side counts the trivial solutions. Equality holds if and only if \(A\) contains no four distinct elements
whose sum is zero. Equivalently, all sums $a+b$ with \(a,b\in A\)
and $a\neq b$ are distinct up to interchanging \(a\) and \(b\).
This is the \emph{Sidon set} condition in \(\mathbb Z_2^n\). 

Consequently, determining the range of $|A|$ for which this lower bound is sharp requires knowledge of the exact maximum size of a Sidon set in \(\mathbb Z_2^n\), which remains open in general; see \cite{CP26} and the references therein.
\end{remark}

Throughout the remainder of this section, we assume that $k\geq 3$ is odd. We now characterize completely the sets attaining the minimum and maximum possible values of \(r_k(A)\) among all \(a\)-element subsets \(A\subseteq\mathbb Z_2^n\).

Recall that a set $A\subseteq \mathbb{Z}_2^n$ is \emph{sum-free} if $r_3(A)=0$, or equivalently, if \((A+A)\cap A=\varnothing\). We begin with the following simple observation.

\begin{lemma}
\label{lem:rk-free-implies-sum-free}
If $S\subseteq\mathbb{Z}_2^n$ satisfies $r_k(S)=0$, then
$\mathbf{0}\notin S$ and $S$ is sum-free.
\end{lemma}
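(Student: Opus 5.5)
The plan is to exhibit explicit zero-sum $k$-tuples in $S^k$ whenever either conclusion fails. Throughout, $k\geq 3$ is odd, as assumed in this section. Both parts use the same device: in $\mathbb{Z}_2^n$, any element repeated an even number of times contributes $\mathbf{0}$ to a sum. So we pad a short tuple with pairs of a single element until it has length $k$.

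First, suppose $\mathbf{0}\in S$. Then the tuple $(\mathbf{0},\dots,\mathbf{0})\in S^k$ has sum $\mathbf{0}$. This gives $r_k(S)\geq 1$, a contradiction, so $\mathbf{0}\notin S$.

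Second, suppose $S$ is not sum-free. Then there exist $a,b,c\in S$ with $a+b=c$, which is equivalent to $a+b+c=\mathbf{0}$. Since $k$ is odd, $k-3$ is even. Consider the $k$-tuple $(a,b,c,a,a,\dots,a)$, obtained by appending $k-3$ further copies of $a$. These extra copies sum to $(k-3)a=\mathbf{0}$, so the whole tuple sums to $a+b+c=\mathbf{0}$. Hence this is an additive $k$-tuple in $S^k$, again contradicting $r_k(S)=0$.

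This is routine and I expect no real obstacle. The one point needing care is the parity of the padding, $k-3$ even, which is exactly where oddness of $k$ enters. For $k=3$ no padding is needed.
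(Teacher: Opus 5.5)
Your proof is correct and is essentially the paper's argument. The all-zero $k$-tuple rules out $\mathbf{0}\in S$, and a Schur triple padded with $k-3$ (an even number, since $k$ is odd) copies of one element of $S$ gives a zero-sum $k$-tuple; the paper pads with an arbitrary $\mathbf{s}\in S$ rather than your $a$, which makes no difference.
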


\begin{proof}[Proof of Lemma \ref{lem:rk-free-implies-sum-free}]
If $\mathbf{0}\in S$, then the $k$-tuple $(\mathbf{0},\dots,\mathbf{0})$ contributes to
$r_k(S)$, contradicting the assumption that $r_k(S)=0$. Suppose that $S$ is not sum-free. Then there
exist $\mathbf{x},\mathbf{y},\mathbf{x}+\mathbf{y}\in S$. Choosing any
$\mathbf{s}\in S$, the $k$-tuple
$\big(\mathbf{x},\ \mathbf{y},\ \mathbf{x}+\mathbf{y},\
\underbrace{\mathbf{s},\mathbf{s},\ldots, \mathbf{s},\mathbf{s}}_{k-3\text{ terms}}\big)
$ has sum $\mathbf{0}$, since $k-3$ is even. This again contradicts the assumption that $r_k(S)=0$.
\end{proof}

\begin{lemma}
\label{lem:partial-additivity-extension}
Let $K$ be a finite-dimensional vector space over $\mathbb{Z}_2$, let $S\subseteq K$ be a sum-free set, and let $D\coloneqq K\setminus S$.
Suppose that $f:D\to\mathbb{Z}_2$ satisfies
$f(x+y)=f(x)+f(y)$ whenever $ x,y,x+y\in D$. Then $f$ extends to a linear functional on $K$.
\end{lemma}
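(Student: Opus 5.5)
The plan is to build the extension directly. We define a candidate functional $F$ on all of $K$ by ``differencing'' $f$ across pairs of points of $D$, and then check that $F$ is well defined, extends $f$, and is additive.

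First, some preliminary observations. Since $S$ is sum-free, $\mathbf{0}\notin S$: otherwise $(\mathbf{0},\mathbf{0},\mathbf{0})$ would be a Schur triple in $S$. Hence $\mathbf{0}\in D$, and $f(\mathbf{0})=f(\mathbf{0})+f(\mathbf{0})$ gives $f(\mathbf{0})=0$. Moreover, for every $\mathbf{s}\in S$ we have $(S+\mathbf{s})\cap S=\varnothing$, since $\mathbf{y},\mathbf{s},\mathbf{y}+\mathbf{s}\in S$ is forbidden. Thus $S+\mathbf{s}\subseteq D$. We first dispose of the degenerate case in which $D$ is itself a subgroup of $K$. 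By the hypothesis, $f$ is then a homomorphism $D\to\mathbb{Z}_2$, i.e. a linear functional on the subspace $D$, and it extends to $K$ by choosing a complement. This case includes the situation where $S$ is the nontrivial coset of a hyperplane.

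Next, for $\mathbf{x}\in K$, call $\mathbf{y}$ \emph{admissible for} $\mathbf{x}$ if $\mathbf{y}\in D$ and $\mathbf{x}+\mathbf{y}\in D$, and set
\[
F(\mathbf{x})\coloneqq f(\mathbf{y})+f(\mathbf{x}+\mathbf{y}).
\]
For $\mathbf{x}\in D$, the choice $\mathbf{y}=\mathbf{0}$ is admissible and gives $F(\mathbf{x})=f(\mathbf{x})$. So $F$ extends $f$, once we know it is well defined. Existence of an admissible $\mathbf{y}$ for $\mathbf{s}\in S$ means finding $\mathbf{y}\notin S\cup(S+\mathbf{s})$. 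Since $S$ and $S+\mathbf{s}$ are disjoint, this fails only if $|S|=|K|/2$ and $K=S\sqcup (S+\mathbf{s})$. I expect to show this forces $D=S+\mathbf{s}$ to be a subgroup, which returns us to the degenerate case. For this I would check that $D$ is closed under addition using sum-freeness of $S$ and the fact that $S$ has the maximal size.

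For well-definedness, take two admissible choices $\mathbf{y},\mathbf{y}'$ for $\mathbf{x}$. If $\mathbf{y}+\mathbf{y}'\in D$, then applying the additivity hypothesis to the pairs $(\mathbf{y},\mathbf{y}')$ and $(\mathbf{x}+\mathbf{y},\mathbf{x}+\mathbf{y}')$ gives
\[
f(\mathbf{y})+f(\mathbf{y}')=f(\mathbf{y}+\mathbf{y}')=f(\mathbf{x}+\mathbf{y})+f(\mathbf{x}+\mathbf{y}'),
\]
which is exactly the required equality. If $\mathbf{y}+\mathbf{y}'\in S$, I would pass through an intermediate admissible $\mathbf{z}$ such that both $\mathbf{y}+\mathbf{z}$ and $\mathbf{y}'+\mathbf{z}$ lie in $D$, and apply the previous case twice. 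Finding such a $\mathbf{z}$ requires avoiding a union of at most six translates of $S$, several of which are forced to be disjoint or coincide by sum-freeness. This counting step is the main obstacle. If the crude count fails, I would first try to exploit the structure more carefully. For instance, with $\mathbf{t}\coloneqq\mathbf{y}+\mathbf{y}'\in S$, the set $S+\mathbf{t}$ lies in $D$, so one can try $\mathbf{z}\in\{\mathbf{y}+\mathbf{u}:\mathbf{u}\in S+\mathbf{t}\}$ and check the remaining conditions.

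Finally, I would prove additivity $F(\mathbf{x}+\mathbf{x}')=F(\mathbf{x})+F(\mathbf{x}')$ in the same style. Choose $\mathbf{y}$ admissible for $\mathbf{x}$ and $\mathbf{y}'$ admissible for $\mathbf{x}'$, arranged (by the same avoidance argument) so that $\mathbf{y}+\mathbf{y}'\in D$ and $\mathbf{x}+\mathbf{x}'+\mathbf{y}+\mathbf{y}'\in D$, with the corresponding partial sums also in $D$. Expanding each $F$-value through the additivity of $f$ on $D$ then telescopes to the identity. Since $F$ is additive over $\mathbb{Z}_2$, it is linear, and it restricts to $f$ on $D$.
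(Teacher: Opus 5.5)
There is a genuine gap in the well-definedness step, and a better count will not close it. In the case $\mathbf{y}+\mathbf{y}'\in S$ you propose an admissible $\mathbf{z}$ with $\mathbf{y}+\mathbf{z},\mathbf{y}'+\mathbf{z}\in D$, but such a $\mathbf{z}$ need not exist.

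Take $K=\mathbb{Z}_2^3$ with coordinates $(v_1,v_2,v_3)$ and $S=\{\mathbf{v}\in K: v_1=v_2=1\}=\{(1,1,0),(1,1,1)\}$. This set is sum-free, since $S+S=\{\mathbf{0},(0,0,1)\}$. Also $|D|=6$, so $D$ is not a subgroup and your degenerate case does not apply. For $\mathbf{x}=(1,1,0)$, the elements $\mathbf{y}=(0,1,0)$ and $\mathbf{y}'=(1,0,1)$ are admissible, and $\mathbf{y}+\mathbf{y}'=(1,1,1)\in S$. Any admissible $\mathbf{z}$ has $(z_1,z_2)\in\{(0,1),(1,0)\}$. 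The condition $\mathbf{y}+\mathbf{z}\in D$ excludes $(1,0)$, and $\mathbf{y}'+\mathbf{z}\in D$ excludes $(0,1)$. Worse, two admissible elements whose sum lies in $D$ must share the same pair $(z_1,z_2)$, so no chain of applications of your first case joins $\mathbf{y}$ to $\mathbf{y}'$.

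Your fallback cannot work either. If $\mathbf{z}=\mathbf{y}+\mathbf{u}$ with $\mathbf{u}\in S+\mathbf{t}$, then $\mathbf{y}'+\mathbf{z}=\mathbf{t}+\mathbf{u}\in S$ automatically. Translates of $S$ can have size close to $|K|/2$, so no counting argument is available. The additivity step, which you defer to ``the same avoidance argument'', is therefore also unproved. (The existence step is fine: if $K=S\sqcup(S+\mathbf{s})$ then $D=S+\mathbf{s}$ and $D+D=S+S\subseteq D$.)

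The missing idea is to use the involution $\mathbf{y}\mapsto\mathbf{x}+\mathbf{y}$ together with sum-freeness applied to $\mathbf{x}$ itself.

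\begin{itemize}
\item \emph{The case $\mathbf{x}\in D$.} Nothing needs comparing: $\mathbf{y},\mathbf{x}+\mathbf{y},\mathbf{x}\in D$ gives $f(\mathbf{y})+f(\mathbf{x}+\mathbf{y})=f(\mathbf{x})$ directly.
\item \emph{The case $\mathbf{x}\in S$ and $\mathbf{t}=\mathbf{y}+\mathbf{y}'\in S$.} Compare $\mathbf{y}$ with $\mathbf{x}+\mathbf{y}'$, which yields the same value as $\mathbf{y}'$. Their sum $\mathbf{x}+\mathbf{t}$ lies in $D$ because $\mathbf{x},\mathbf{t}\in S$, so your first case applies.
\item \emph{Additivity.} It suffices to prove $F(\mathbf{x}+\mathbf{d})=F(\mathbf{x})+f(\mathbf{d})$ for $\mathbf{d}\in D$, and then apply it twice with $\mathbf{x}'=\mathbf{y}'+(\mathbf{x}'+\mathbf{y}')$. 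The only nontrivial case is $\mathbf{x},\mathbf{x}+\mathbf{d}\in S$. There, for $\mathbf{y}$ admissible for $\mathbf{x}$, one of $\mathbf{y}+\mathbf{d}$ and $\mathbf{x}+\mathbf{y}+\mathbf{d}$ lies in $D$, since otherwise they form a Schur triple with $\mathbf{x}$. Either one is admissible for $\mathbf{x}+\mathbf{d}$ and gives the identity.
\end{itemize}

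The paper avoids admissible pairs altogether. It defines $\Phi(d_1+\cdots+d_m)\coloneqq f(d_1)+\cdots+f(d_m)$ on $\langle D\rangle$. It then shows that every zero-sum relation in $D$ has $f$-sum zero, by induction on the length of the relation. The induction uses that among $d_1,d_2,d_3$ some pairwise sum lies in $D$, since otherwise $d_1+d_2$, $d_1+d_3$, $d_2+d_3$ would be a Schur triple in $S$. Merging that pair shortens the relation. This also makes your split into a degenerate case unnecessary, because the extension from $\langle D\rangle$ to $K$ is plain linear algebra.
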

\begin{proof}[Proof of Lemma \ref{lem:partial-additivity-extension}]
Since $S$ is sum-free, we have $\mathbf{0}\notin S$, and hence
$\mathbf{0}\in D$. Taking $(x,y)=(\mathbf{0},\mathbf{0})$ gives
$f(\mathbf{0})=0$. Next, consider the linear span of $D$:
\[
    \langle D\rangle
    =
    \{d_1+\cdots+d_m:m\in\mathbb{N}_0,\ d_1,\ldots,d_m\in D\}.
\]
Define $\Phi:\langle D\rangle\to\mathbb{Z}_2$ as follows: if
$x=d_1+\cdots+d_m\in\langle D\rangle$, let $    \Phi(x)\coloneqq f(d_1)+\cdots+f(d_m).$
We claim that $\Phi$ is well-defined. It suffices to prove the following
claim.

\noindent
\textit{Claim.} If $d_1+\cdots+d_r=\mathbf{0}$ for some $r\geq 1$, then
$f(d_1)+\cdots+f(d_r)=0$.

We proceed by induction on $r$. The cases $r\leq 2$ are immediate, so
suppose that $r\geq 3$. We claim that there exists a pair $(i,j)$ with
$1\leq i<j\leq r$ such that $d_i+d_j\in D$. Indeed, otherwise $d_1+d_2, d_1+d_3,d_2+d_3$
would all belong to $S$. Since
$d_2+d_3=(d_1+d_2)+(d_1+d_3)$,
this would contradict the assumption that \(S\) is sum-free. Replacing
$d_i$ and $d_j$ by $d_i+d_j$ shortens the relation, so we may apply the
induction hypothesis. Since $d_i,d_j,d_i+d_j\in D$ and
$f(d_i+d_j)=f(d_i)+f(d_j)$, we conclude that
$f(d_1)+\cdots+f(d_r)=0$. This proves the claim.

It follows that $\Phi$ is a linear functional on $\langle D\rangle$.
Since $\langle D\rangle$ is a subspace of $K$, the function $\Phi$
extends to a linear functional on $K$. Thus,
$f$ extends to a linear functional on $K$.
\end{proof}

The following lemma shows that, for sets of a certain form, if a compression preserves \(r_k\), then the original set must have the same structure as its compression.

\begin{lemma}
\label{lem:inverse-compression-step}
Let $B\subseteq\mathbb{Z}_2^n$ have size $b>0$, and set $Q\coloneqq 2^{\lceil\log_2 b\rceil}.$
Let $\mathbf{v}\in\mathbb{Z}_2^n\setminus\{\mathbf{0}\}$, and suppose
that $\mathrm{C}_{\mathbf{v}}(B)=K\setminus S$, where
$K\leq\mathbb{Z}_2^n$, $|K|=Q$, and $S\subseteq K$ satisfies
$r_k(S)=0$. If $r_k(B)=r_k(\mathrm{C}_{\mathbf{v}}(B))$, then
$B=K'\setminus S'$ for some $K'\leq\mathbb{Z}_2^n$ with
$|K'|=Q$ and some $S'\subseteq K'$ satisfying $r_k(S')=0$.
\end{lemma}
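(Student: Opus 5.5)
The plan is to read off fibrewise equality conditions from the hypothesis $r_k(B)=r_k(\mathrm{C}_{\mathbf v}(B))$, and then to realise $B$ as the image of $\mathrm{C}_{\mathbf v}(B)$ under an injective linear map. That map will preserve both the subgroup structure and the property $r_k(\cdot)=0$.

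\textbf{Step 1: fibrewise equality.} Write $C\coloneqq\mathrm{C}_{\mathbf v}(B)=K\setminus S$. By Lemma~\ref{lem: reduction from Z_2^n to Z_2}, both $r_k(B)$ and $r_k(C)$ are sums over the same index set $\{(\mathbf h_1,\dots,\mathbf h_k)\in L_{\mathbf v}^k:\mathbf h_1+\cdots+\mathbf h_k=\mathbf 0\}$. By Lemma~\ref{lem: monotonicity of r_k for Z_2} and \eqref{h-fibre of C_v(X)}, each summand for $B$ is at most the corresponding summand for $C$. Hence equality of the totals forces termwise equality. The proof of Lemma~\ref{lem: monotonicity of r_k for Z_2} shows that strict inequality occurs only when all fibres are singletons $\{\alpha_i\}$ with $\sum\alpha_i=1$.

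Let $E\subseteq L_{\mathbf v}$ be the set of $\mathbf h$ with $|B_{\mathbf h}|=1$, and define $f:E\to\mathbb Z_2$ by $B_{\mathbf h}=\{f(\mathbf h)\}$. The equality condition is then
\[
\mathbf h_1,\dots,\mathbf h_k\in E,\ \ \mathbf h_1+\cdots+\mathbf h_k=\mathbf 0 \ \Longrightarrow\ f(\mathbf h_1)+\cdots+f(\mathbf h_k)=0 .
\]
Since $k-3$ is even, the tuple $(\mathbf x,\mathbf y,\mathbf x+\mathbf y,\mathbf s,\dots,\mathbf s)$ with $\mathbf s\in E$ gives $f(\mathbf x+\mathbf y)=f(\mathbf x)+f(\mathbf y)$ whenever $\mathbf x,\mathbf y,\mathbf x+\mathbf y\in E$. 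Similarly, $(\mathbf 0,\dots,\mathbf 0)$ gives $f(\mathbf 0)=0$ if $\mathbf 0\in E$.

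\textbf{Step 2: the case $\mathbf v\notin K$.} Here no pair $\{\mathbf h,\mathbf h+\mathbf v\}$ lies inside $K$, so all fibres of $C$ have size at most one. Hence $C\subseteq L_{\mathbf v}$ and $E=C$. Since $|C|=b>Q/2$, the set $C$ generates $K$, so $K\le L_{\mathbf v}$. By Lemma~\ref{lem:rk-free-implies-sum-free}, $S$ is sum-free. Lemma~\ref{lem:partial-additivity-extension}, applied in $K$ with $D=K\setminus S=E$, extends $f$ to a linear functional $\varphi$ on $K$. (If $b=1$, then $K=\{\mathbf 0\}$ and the conclusion is immediate.)

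Now set $T(\mathbf x)\coloneqq\mathbf x+\varphi(\mathbf x)\mathbf v$. This map is linear and injective on $K$, because $K\cap\langle\mathbf v\rangle=\{\mathbf 0\}$, and $B=T(C)=T(K)\setminus T(S)$. Take $K'\coloneqq T(K)$ and $S'\coloneqq T(S)$. Then $r_k(S')=r_k(S)=0$, since $T$ is an injective homomorphism.

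\textbf{Step 3: the case $\mathbf v\in K$.} Now $K=K_0\sqcup(K_0+\mathbf v)$ with $K_0\coloneqq K\cap L_{\mathbf v}$. Because $C$ is compressed, $\mathbf h\in S$ implies $\mathbf h+\mathbf v\in S$ for $\mathbf h\in K_0$. The fibres of $B$ over $K_0$ are of three kinds:
\begin{itemize}
\item full, on a set $F$;
\item singletons, on $E$, corresponding to $\mathbf h+\mathbf v\in S$ with $\mathbf h\notin S$;
\item empty, on $S\cap K_0$.
\end{itemize}
Thus $K_0=F\sqcup E\sqcup(S\cap K_0)$, and $S\cap K_0$ is sum-free.

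I would aim to extend $f$ to a linear functional $\varphi$ on $K_0$ and put $T(\mathbf x)\coloneqq\mathbf x+\varphi(\pi\mathbf x)\mathbf v$, where $\pi$ is the projection onto $L_{\mathbf v}$ along $\mathbf v$. This $T$ is an automorphism of $K$ that preserves each pair $\{\mathbf h,\mathbf h+\mathbf v\}$ and swaps it exactly when $\varphi(\mathbf h)=1$. Consequently $T(C)=B$, and we may take $K'=K$ and $S'=T(S)$.

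To obtain $\varphi$, I would apply Lemma~\ref{lem:partial-additivity-extension} on $K_0$ with $D=F\sqcup E=K_0\setminus(S\cap K_0)$. This requires first extending $f$ from $E$ to $F$ so that the partial additivity holds on all of $D$.

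\textbf{Main obstacle.} The key difficulty is this extension step. Triples $\mathbf x,\mathbf y,\mathbf x+\mathbf y\in D$ that meet $F$ are not directly constrained by Step~1, since tuples containing a full fibre impose no condition. My first attempt would be to define $f$ on $F$ coset by coset. Concretely, I would show that the restriction of $f$ to $E$, which is additive on triples in $E$, already extends linearly to $\langle E\rangle$: by the induction in the proof of Lemma~\ref{lem:partial-additivity-extension}, any zero-sum relation in $E$ reduces to $k$-term relations, after padding with repeated pairs $\mathbf s,\mathbf s$ to reach length $k$. I would then extend this linear map arbitrarily from $\langle E\rangle$ to $K_0$.

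Since every element of $F$ has a full fibre, the values assigned on $F$ do not affect the identity $T(C)=B$. The point to check is therefore only that the extension agrees with $f$ on $E$, which holds by construction. If that works, Lemma~\ref{lem:partial-additivity-extension} is needed only in the reduction step, and the remaining verification is routine.
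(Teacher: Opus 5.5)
\textbf{The gap is in Step 3.} Steps 1 and 2 are correct, and Step 2 is essentially the paper's argument for the case $\mathbf v\notin K$. Step 3 fails exactly where you flagged the obstacle: the claim that $f|_E$ extends to a linear functional on $\langle E\rangle$ is false in general.
\begin{itemize}
\item Step 1 only constrains zero-sum relations of odd length $k$ among elements of $E$. Padding with pairs $\mathbf s,\mathbf s$ preserves parity, so even-length relations are never reached.
\item The reduction in the proof of Lemma~\ref{lem:partial-additivity-extension} cannot be run with $D=E$. It relies on the complement of $D$ being sum-free, whereas here the pairwise sums of elements of $E$ can all fall into $F$.
\end{itemize}

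\textbf{A concrete counterexample.} Take any odd $k$, $n=4$, $K=\mathbb Z_2^4$ and $\mathbf v=1000$. Let $S=\{1001,1011,1101,1111\}$, so $r_k(S)=0$ because $S\subseteq\{x_3=1\}$. Let $B=\{\mathbf x:x_0=0\}\cup\{1001,0011,0101,0111\}$, so $b=12$ and $Q=16$.
\begin{itemize}
\item The hypotheses hold. We have $\mathrm{C}_{\mathbf v}(B)=K\setminus S$. Also $K\setminus B=\{0001,1011,1101,1111\}\subseteq\{x_0=1\}$, so $r_k(K\setminus B)=0$. The complement identity then gives $r_k(B)=\frac{12^k+4^k}{16}=r_k(\mathrm{C}_{\mathbf v}(B))$.
\item Step 1 gives nothing here. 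Since $E=\{0001,0011,0101,0111\}\subseteq\{x_0=1\}$ and $k$ is odd, no $k$ elements of $E$ sum to $\mathbf 0$, so Step 1 imposes no condition on $f$.
\item $f$ has no linear extension. We have $f(0001)=1$ and $f$ vanishes on the other three elements, yet the four elements of $E$ sum to $\mathbf 0$. Hence no linear $\varphi$ agrees with $f$ on $E$.
\item No injective linear map carries $\mathrm{C}_{\mathbf v}(B)$ onto $B$ at all. Such a map would restrict to an automorphism of $K=\langle \mathrm{C}_{\mathbf v}(B)\rangle=\langle B\rangle$ sending $S$ onto $K\setminus B$. But the elements of $S$ sum to $\mathbf 0$, while those of $K\setminus B$ sum to $1000$.
\end{itemize}
So no cleverer choice of $\varphi$, or of values on $F$, can rescue Step 3.

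\textbf{The missing idea.} When $\mathbf v\in K$, no transport map is needed.
\begin{itemize}
\item Each pair $\{\mathbf h,\mathbf h+\mathbf v\}$ is either contained in $K$ or disjoint from it, and compression preserves fibre sizes. Hence $\mathrm{C}_{\mathbf v}(B)\subseteq K$ forces $B\subseteq K$.
\item Set $K'\coloneqq K$ and $S'\coloneqq K\setminus B$, so that $|S'|=|S|=Q-b$.
\item Apply the complement identity (Lemma~\ref{lem:general-complement-expansion}) inside $K$ with $k$ odd. This gives $r_k(B)=\frac{b^k+(Q-b)^k}{Q}-r_k(S')$ and $r_k(\mathrm{C}_{\mathbf v}(B))=\frac{b^k+(Q-b)^k}{Q}-r_k(S)$.
\item The hypothesis $r_k(B)=r_k(\mathrm{C}_{\mathbf v}(B))$ therefore forces $r_k(S')=r_k(S)=0$.
\end{itemize}
This is how the paper handles this case. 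As the example shows, the resulting $S'$ is in general not a linear image of $S$.
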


\begin{proof}[Proof of Lemma \ref{lem:inverse-compression-step}]
We distinguish two cases.

1) Suppose that \(\mathbf{v}\in K\). Since each pair
\(\{\mathbf{h},\mathbf{h}+\mathbf{v}\}\) is either contained in \(K\) or
disjoint from \(K\), the inclusion \(\mathrm{C}_{\mathbf{v}}(B)\subseteq K\)
implies that \(B\subseteq K\). Let \(S'\coloneqq K\setminus B\).
Since \(|S'|=|S|=Q-b\), the complement identity
applied inside \(K\) gives
\[
    r_k(B)=\frac{b^k+(Q-b)^k}{Q}-r_k(S'),
    \qquad \text{and} \qquad
    r_k(\mathrm{C}_{\mathbf{v}}(B))
    =\frac{b^k+(Q-b)^k}{Q}-r_k(S).
\]
Since \(r_k(B)=r_k(\mathrm{C}_{\mathbf{v}}(B))\) and \(r_k(S)=0\), it follows that \(r_k(S')=0\). Thus, \(B=K\setminus S'\) has the required form.

2) Suppose that $\mathbf{v}\notin K$. We first make two observations:
\begin{itemize}
    \item For every $\mathbf{h}\in L_{\mathbf{v}}$, we have $       |K\cap\{\mathbf{h},\mathbf{h}+\mathbf{v}\}|\leq 1,$ since otherwise $\mathbf{v}\in K$.
    \item We have $\mathrm{C}_{\mathbf{v}}(B)\subseteq L_{\mathbf{v}}$. Indeed, if
    $\mathbf{h}+\mathbf{v}\in\mathrm{C} _{\mathbf{v}}(B)$, then $\mathbf{h}\in \mathrm{C}_{\mathbf{v}}(B)$, which would imply
    $\mathbf{v}\in K$.
\end{itemize}

It follows from the second observation and the definition of \(Q\) that
\[
    |K\cap L_{\mathbf{v}}|
    \geq |\mathrm{C}_{\mathbf{v}}(B)|
    =|B|
    >Q/2.
\]
Since $K\cap L_{\mathbf{v}}$ is a subspace of $K$, we must have
$K\cap L_{\mathbf{v}}=K$, and hence $K\subseteq L_{\mathbf{v}}$.

Let $D\coloneqq \mathrm{C}_{\mathbf{v}}(B)=K\setminus S$. For every
$\mathbf{h}\in D$, the set
$B\cap\{\mathbf{h},\mathbf{h}+\mathbf{v}\}$ is a singleton. Therefore,
there exists a function $f:D\to\mathbb{Z}_2$ such that $B\cap\{\mathbf{h},\mathbf{h}+\mathbf{v}\}
    =
    \{\mathbf{h}+f(\mathbf{h})\mathbf{v}\}$
for every $\mathbf{h}\in D$.

We claim that whenever
$\mathbf{x},\mathbf{y},\mathbf{x}+\mathbf{y}\in D$, we have
\begin{equation}
\label{eqn:f-additivity}
    f(\mathbf{x}+\mathbf{y})
    =
    f(\mathbf{x})+f(\mathbf{y}).
\end{equation}

Suppose, to the contrary, that \eqref{eqn:f-additivity} fails. Then there
exist $\mathbf{x}',\mathbf{y}',\mathbf{x}'+\mathbf{y}'\in D$ such that
$f(\mathbf{x}'+\mathbf{y}')\neq f(\mathbf{x}')+f(\mathbf{y}').$
By Lemma~\ref{lem:rk-free-implies-sum-free}, we have
$\mathbf{0}\notin S$, and hence $\mathbf{0}\in D$. By
Definition~\ref{def:fibre}, for every
$\mathbf{t}\in \{\mathbf{x}',\mathbf{y}',\mathbf{x}'+\mathbf{y}',\mathbf{0}\},$
the fibre $B_{\mathbf{t}}$ is the singleton $\{f(\mathbf{t})\}$.
Therefore,
\[
    r_k\Big(
        \mathrm{I}_{|B_{\mathbf{x}'}|},
        \mathrm{I}_{|B_{\mathbf{y}'}|},
        \mathrm{I}_{|B_{\mathbf{x}'+\mathbf{y}'}|},
        \underbrace{\mathrm{I}_{|B_{\mathbf{0}}|},\ldots,
        \mathrm{I}_{|B_{\mathbf{0}}|}}_{k-3\text{ terms}}
    \Big)
    =
    r_k(\mathrm{I}_1,\ldots,\mathrm{I}_1)
    =
    1.
\]
On the other hand, since $k-3$ is even,
\[
    r_k\Big(
        B_{\mathbf{x}'},
        B_{\mathbf{y}'},
        B_{\mathbf{x}'+\mathbf{y}'},
        \underbrace{B_{\mathbf{0}},\ldots,
        B_{\mathbf{0}}}_{k-3\text{ terms}}
    \Big)
    =
    0.
\]

By Lemma~\ref{lem: reduction from Z_2^n to Z_2} and
\eqref{h-fibre of C_v(X)},
\begin{align*}
r_k(B)
&=
\sum_{\substack{\mathbf{h}_1,\ldots,\mathbf{h}_k\in L_{\mathbf{v}}\\
\mathbf{h}_1+\cdots+\mathbf{h}_k=\mathbf{0}}}
r_k\big(B_{\mathbf{h}_1},\ldots,B_{\mathbf{h}_k}\big)\qquad \text{and} \qquad
r_k(\mathrm{C}_{\mathbf{v}}(B))
=
\sum_{\substack{\mathbf{h}_1,\ldots,\mathbf{h}_k\in L_{\mathbf{v}}\\
\mathbf{h}_1+\cdots+\mathbf{h}_k=\mathbf{0}}}
r_k\big(
    \mathrm{I}_{|B_{\mathbf{h}_1}|},\ldots,
    \mathrm{I}_{|B_{\mathbf{h}_k}|}
\big).
\end{align*}
By Lemma~\ref{lem: monotonicity of r_k for Z_2}, every summand in the
first sum is at most the corresponding summand in the second. However,
for the $k$-tuple $(\mathbf{x}',\mathbf{y}',\mathbf{x}'+\mathbf{y}',         \underbrace{\mathbf{0},\ldots,\mathbf{0}}_{k-3\text{ terms}}),$
the inequality is strict. Therefore,
$r_k(B)<r_k(\mathrm{C}_{\mathbf{v}}(B))$, contradicting the assumption that
$r_k(B)=r_k(\mathrm{C}_{\mathbf{v}}(B))$. This proves
\eqref{eqn:f-additivity}.

By Lemma~\ref{lem:rk-free-implies-sum-free}, the set $S$ is sum-free.
Hence, by Lemma~\ref{lem:partial-additivity-extension}, the function
$f$ extends to a linear functional $\ell$ on $K$. Define
$K'
    \coloneqq
    \{\mathbf{h}+\ell(\mathbf{h})\mathbf{v}:\mathbf{h}\in K\}$ and $S'
    \coloneqq
    \{\mathbf{h}+\ell(\mathbf{h})\mathbf{v}:\mathbf{h}\in S\}.$
The map $\phi:K\to\mathbb{Z}_2^n$ defined by $\phi(\mathbf{h})    \coloneqq\mathbf{h}+\ell(\mathbf{h})\mathbf{v}$ is linear. Moreover, it is injective: if $\phi(\mathbf{h})=\mathbf{0}$,
then $\mathbf{h}=\ell(\mathbf{h})\mathbf{v}$, and since
$K\subseteq L_{\mathbf{v}}$ while $\mathbf{v}\notin L_{\mathbf{v}}$,
we must have $\mathbf{h}=\mathbf{0}$. Therefore,
$K'\leq\mathbb{Z}_2^n$ and $|K'|=|K|=Q$. Moreover, $r_k(S')=0$ follows
from the injectivity of $\phi$ and the fact that $r_k(S)=0$.

Since $L_{\mathbf{v}}=(L_{\mathbf{v}}\setminus K)\sqcup(K\setminus S)\sqcup S,$ we have
\begin{align*}
B=
\Bigg(
\bigsqcup_{\mathbf{h}\in L_{\mathbf{v}}\setminus K}
B\cap\{\mathbf{h},\mathbf{h}+\mathbf{v}\}
\Bigg)
\sqcup
\Bigg(
\bigsqcup_{\mathbf{h}\in K\setminus S}
B\cap\{\mathbf{h},\mathbf{h}+\mathbf{v}\}
\Bigg)
\sqcup
\Bigg(
\bigsqcup_{\mathbf{h}\in S}
B\cap\{\mathbf{h},\mathbf{h}+\mathbf{v}\}
\Bigg).
\end{align*}
However,
$B\cap\{\mathbf{h},\mathbf{h}+\mathbf{v}\}=\varnothing$ whenever
$\mathbf{h}\in L_{\mathbf{v}}\setminus K$ or $\mathbf{h}\in S$.
Therefore,
\begin{align*}
B
=
\bigsqcup_{\mathbf{h}\in K\setminus S}
B\cap\{\mathbf{h},\mathbf{h}+\mathbf{v}\}=
\bigsqcup_{\mathbf{h}\in K\setminus S}
\{\mathbf{h}+f(\mathbf{h})\mathbf{v}\}
=
K'\setminus S',
\end{align*}
which completes the proof.
\end{proof}

We are now ready to prove Theorem \ref{thm:equality-maximum}, which characterizes the sets that maximize \(r_k\) among all subsets of \(\mathbb Z_2^n\) of a prescribed size.

\begin{proof}[Proof of Theorem \ref{thm:equality-maximum}]
\boxed{\Rightarrow} We first establish the following claim.

\noindent
\textit{Claim.}
The set $\mathrm{IS}_b$ can be written as $K\setminus S$, where
$K\leq\mathbb{Z}_2^n$, $|K|=Q$, and $S\subseteq K$ satisfies
$|S|=Q-b$ and $r_k(S)=0$.

Indeed, let $K\coloneqq \mathrm{IS}_Q$ and  $S\coloneqq\mathrm{IS}_Q\setminus\mathrm{IS}_b$. Since $Q/2<b \leq Q$, we have $|S|=Q-b$, and $S$ is contained in the upper half of $K$. As in the proof of Proposition~\ref{cor:odd-initial-segment-value}, the oddness of $k$ implies that $r_k(S)=0$, proving the claim.

Now suppose that $B$ is a maximizer. We distinguish two cases.

1) Suppose that $\mathrm{C}_{\mathbf{v}}(B)=B$ for every
$\mathbf{v}\in\mathbb{Z}_2^n\setminus\{\mathbf{0}\}$. Then, by
Lemma~\ref{lem:compression-sets-initial}, we have
$B=\mathrm{IS}_b$, and the conclusion follows from the claim.

2) Suppose that there exists
$\mathbf{v}\in\mathbb{Z}_2^n\setminus\{\mathbf{0}\}$ such that
$\mathrm{C}_{\mathbf{v}}(B)\neq B$. By the compression argument used in the proof of
Theorem~\ref{thm:majorization}, there exists a finite
sequence of compressions
\[
    B=B_0\longrightarrow B_1\longrightarrow\cdots
    \longrightarrow B_t=\mathrm{IS}_b
\]
that transforms $B$ into $\mathrm{IS}_b$, where $   r_k(B_{i-1})\leq r_k(B_i)$ for every $i\in [t]$. Since $B$ is a maximizer, we have
$r_k(B_{i-1})=r_k(B_i)$ for every $i\in[t]$. By the claim, the terminal
set $\mathrm{IS}_b$ has the desired form. Applying
Lemma~\ref{lem:inverse-compression-step} repeatedly along the compression
sequence in reverse order shows that the original set $B$ also has the
required form.

\(\boxed{\Leftarrow}\) Conversely, suppose that \(B=K\setminus S\), where
\(K\leq\mathbb{Z}_2^n\), \(|K|=Q\), \(|S|=Q-b\), and \(r_k(S)=0\).
Since \(k\) is odd, the complement identity applied inside \(K\) gives
\[
    r_k(B)=\frac{b^k+(Q-b)^k}{Q}.
\]
By the claim, the same argument shows that $r_k(B) = r_k(\mathrm{IS}_b)$. Since \(\mathrm{IS}_b\) maximizes \(r_k\) by Theorem~\ref{thm:higher-order-rearrangement}, \(B\) also maximizes $r_k$.
\end{proof}

As an immediate corollary, we obtain a characterization of the sets that minimize \(r_k\) among all subsets of \(\mathbb Z_2^n\) of a prescribed cardinality. 
\begin{cor}
\label{cor:equality-minimum}
Let $A\subseteq\mathbb{Z}_2^n$ have size $a<q$. Set
$Q\coloneqq 2^{\lceil\log_2(q-a)\rceil}.$
Then $A$ minimizes $r_k(X)$ among all $a$-element subsets
$X\subseteq\mathbb{Z}_2^n$ if and only if $A=K^c\sqcup S,$
where $K\leq\mathbb{Z}_2^n$, $|K|=Q$, and $S\subseteq K$ satisfies $|S|=Q-(q-a)$ and $r_k(S)=0.$
\end{cor}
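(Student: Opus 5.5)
The plan is to reduce the statement to Theorem~\ref{thm:equality-maximum} by passing to complements, using the complement identity (Lemma~\ref{lem:general-complement-expansion}). We assume throughout, as in this section, that $k\geq 3$ is odd. Then for every $a$-element set $X\subseteq\mathbb{Z}_2^n$,
\[
r_k(X)=\frac{a^k+(q-a)^k}{q}-r_k(X^c),
\]
and the first term depends only on $a$. Hence $A$ minimizes $r_k$ among $a$-element subsets if and only if $A^c$ maximizes $r_k$ among $(q-a)$-element subsets. This is exactly the reduction already used in the proof of Proposition~\ref{prop:diagonal-minimum}.

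Since $a<q$, we have $b\coloneqq q-a>0$, so Theorem~\ref{thm:equality-maximum} applies to $B\coloneqq A^c$ with $Q=2^{\lceil\log_2 b\rceil}$, which matches the $Q$ in the statement. It says that $A^c$ is a maximizer if and only if $A^c=K\setminus S$ for some subgroup $K\leq\mathbb{Z}_2^n$ with $|K|=Q$ and some $S\subseteq K$ with $|S|=Q-(q-a)$ and $r_k(S)=0$.

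It remains to translate this description back to $A$. Because $S\subseteq K$, we have
\[
A=(K\setminus S)^c=K^c\sqcup S,
\]
and the union is disjoint since $S\subseteq K$. Conversely, if $A=K^c\sqcup S$ with $S\subseteq K$, then $A^c=K\setminus S$. So the two descriptions correspond exactly, with the same $K$, $S$, and size conditions, which proves both directions.

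There is no real obstacle here. The only points to check are the parity of $k$, which makes the sign $(-1)^k$ in the complement identity negative, and the hypothesis $a<q$, which guarantees $b>0$ so that Theorem~\ref{thm:equality-maximum} can be applied.
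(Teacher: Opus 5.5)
Your proposal is correct and follows the paper's own proof: for odd $k$ the complement identity makes minimizing $r_k(A)$ equivalent to maximizing $r_k(A^c)$ among $(q-a)$-element sets, Theorem~\ref{thm:equality-maximum} is applied to $A^c$, and the result follows via $A^c=K\setminus S \iff A=K^c\sqcup S$. Your checks that $X\mapsto X^c$ is a bijection between $a$-element and $(q-a)$-element sets, and that $a<q$ gives $b>0$, are the right small points to verify.
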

\begin{proof}[Proof of Corollary \ref{cor:equality-minimum}]
Assume that $a<q$. Since $k$ is odd, the complement identity shows that $A$ minimizes $r_k(X)$ if and only if $A^c$ maximizes $r_k(Y)$ among all $(q-a)$-element subsets $Y\subseteq \mathbb{Z}_2^n$. The result now follows from Theorem~\ref{thm:equality-maximum},
since $A^c=K\setminus S$ if and only if $A=K^c\sqcup S$.
\end{proof}

\end{document}